\newcounter{citedtheorems}
\newtheorem{defn}{Definition}[section]
\newtheorem{theorem}[defn]{Theorem}
\newtheorem*{theorem-m}{Theorem \ref{main-theorem}}
\newtheorem*{thm-p2a}{Theorem \ref{t:p2a}}
\newtheorem*{thm-seq}{Theorem \ref{t:seq}}
\newtheorem*{thm-e}{Theorem}
\newtheorem*{thm-m}{Main Theorem}
\newtheorem*{theorem-abs1}{Theorem \ref{ind-theorem}}
\newtheorem*{theorem-abs2}{Theorem \ref{a23}}
\newtheorem*{theorem-abs3}{Theorem \ref{ind-new}}
\newtheorem*{theorem-abs4}{Theorem \ref{m1}}
\newtheorem*{thm-x}{Theorem}
\newtheorem{thm-lit}[citedtheorems]{Theorem}
\newtheorem{defn-lit}[citedtheorems]{Definition}
\newtheorem{fact-lit}[citedtheorems]{Fact}
\newtheorem{fact}[defn]{Fact}
\newtheorem{defn-claim}[defn]{Definition/Claim}
\newtheorem*{defn-in}{Definition \arabic{section}.\arabic{equation}}
\newtheorem*{claim-in}{Claim \arabic{section}.\arabic{equation}}
\newtheorem{conv}[defn]{Convention}
\newtheorem{claim}[defn]{Claim}
\newtheorem{subclaim}[defn]{Subclaim}
\newtheorem{lemma}[defn]{Lemma}
\newtheorem{obs}[defn]{Observation}
\newtheorem{comm}[defn]{Comment}
\newtheorem{ntn}[defn]{Notation}
\newtheorem{disc}[defn]{Discussion}
\newtheorem{qst}[defn]{Question}
\newcommand{\br}{\vspace{2mm}}
\newcommand{\sbr}{\vspace{1mm}}
\newcommand{\eff}{\mathcal{F}}
\newcommand{\gee}{\mathcal{G}}
\newcommand{\ml}{\mathcal{L}}
\newcommand{\tlf}{\trianglelefteq}
\newcommand{\tlfn}{\triangleleft}
\newcommand{\rn}{\operatorname{range}}
\newcommand{\cf}{\operatorname{cof}}
\newcommand{\dom}{\operatorname{dom}}
\newcommand{\mcv}{\mathcal{V}}
\newcommand{\uu}{\mathcal{U}}
\newcommand{\mlx}{\mathcal{M}_{\lambda}}
\newcommand{{\xw}}{\mathbf{w}}
\newcommand{\xn}{\mathfrak{n}}
\newcommand{\vrt}{\operatorname{vert}}
\newcommand{\mcf}{\mathcal{F}}
\newcommand{\tp}{\operatorname{tp}}
\newcommand{\supp}{\operatorname{supp}}
\newcommand{\mct}{\mathcal{T}}
\newcommand{\zm}{\mathcal{M}}
\newcommand{\lgn}{\operatorname{lg}}
\newcommand{\de}{\mathcal{D}}
\newcommand{\ts}{\mathbf{S}}
\newcommand{\jj}{\mathbf{j}}
\newcommand{\mcp}{\mathcal{P}}
\newcommand{\ba}{\mathfrak{B}}
\newcommand{\rstr}{\upharpoonright}
\newcommand{\mcr}{\mathcal{R}}
\newcommand{\bad}{\mathcal{P}}
\newcommand{\mcs}{\mathcal{S}}
\newcommand{\vp}{\varphi}
\newcommand{\ma}{\mathbf{a}}
\newcommand{\mb}{\mathbf{b}}
\newcommand{\mc}{\mathbf{c}}
\newcommand{\mx}{\mathbf{x}}
\newcommand{\fin}{\operatorname{FI}}
\newcommand{\trg}{T_{\mathbf{rg}}}
\newcommand{\xm}{\mathfrak{m}}
\newcommand{\xr}{\mathfrak{r}}
\newcommand{\clm}{\operatorname{cl}_{\zm}}
\newcommand{\tsf}{T_f}
\newcommand{\tsfo}{T_{0,f}}
\newcommand{\leaf}{\operatorname{leaf}}
\title{An example of a new simple theory}
\author{M. Malliaris and S. Shelah}
\thanks{\emph{Thanks:} Malliaris was partially supported by DMS-1553653 and by a Minerva Research Foundation membership at the IAS. 
Shelah was partially supported by European Research Council grant 338821.   Both authors thank 
NSF grant 1362974 (Rutgers) and ERC 338821. 
This is paper 1140 in Shelah's list.}
\address{Department of Mathematics, University of Chicago, 5734 S. University Avenue, Chicago, IL 60637, USA} 
\email{mem@math.uchicago.edu}
\address{Einstein Institute of Mathematics, Edmond J. Safra Campus, Givat Ram, The Hebrew
University of Jerusalem, Jerusalem, 91904, Israel, and Department of Mathematics,
Hill Center - Busch Campus, Rutgers, The State University of New Jersey, 110
Frelinghuysen Road, Piscataway, NJ 08854-8019 USA}
\email{shelah@math.huji.ac.il}
\urladdr{http://shelah.logic.at}
\begin{document}

\begin{abstract} 
We construct a countable simple theory which, in Keisler's order, is strictly above the random graph  
and also in some sense orthogonal to the building blocks of the recently discovered infinite descending chain. As a result we prove in ZFC that there are incomparable classes in Keisler's order. 
\end{abstract}

\maketitle

\br
%\hfill{\emph{For Simon Thomas on the occasion of his birthday.}}

\br

Recent work on the structure of Keisler's order is changing our understanding of the 
so-called simple unstable theories, a class which includes the random graph and pseudofinite fields. 
Indeed, when we discovered recently \cite{MiSh:1050} that Keisler's order has infinitely many classes -- overturning a long-standing idea that 
it had five or six -- these infinitely many classes were within the simple theories.  We are starting to see that the simple unstable theories may have 
very interesting layers of complexity above that of the random graph, arising from the interaction of randomness with underlying constraints, with no obvious analogue in the stable case. 

Basic questions remain open about the structure of Keisler's order on the simple theories, 
in part because of a lack of examples. 
For instance, among the unstable theories, the Keisler-minimum class is the class of the random graph. 
The infinitely many classes arise from disjoint unions of certain random hypergraphs with 
forbidden substructures  (higher analogues of the triangle-free random graph \cite{h:letter}) 
forming a descending chain above the random graph. It was not known whether there were 
theories strictly between the random graph and this infinite descending chain. 

In the present paper, we build a new simple theory, really  a family of theories, illustrating how 
the randomness may interact with an underlying structure with some forbidden configurations, 
even with only a graph edge (no hyperedges required).  We analyze its saturation and non-saturation in regular 
ultrapowers, concluding it is strictly above the random 
graph but in a precise sense orthogonal to the higher analogues of the triangle-free random graph.  
As a consequence of our strategy,  we prove there are incomparable classes in Keisler's order just in ZFC; 
this was known to be true under the existence of a supercompact cardinal, 
as noticed independently by Ulrich \cite{ulrich} and the authors \cite{MiSh:F1530}. 
Along the way we give a gentle introduction to some key methods of \cite{MiSh:999}, \cite{MiSh:1030}, \cite{MiSh:1050}, 
the papers which have been the foundation of our current work on simple theories.  

We thank the anonymous referee and D. Ulrich for very helpful comments on the manuscript.  

 \newpage

\tableofcontents

\section{Notation and preliminaries}

Keisler's order compares complete countable theories via the difficulty of saturating their regular ultrapowers. 

\begin{defn}[Keisler's order, 1967 \cite{keisler}]  \label{d:keisler}
Let $T_1, T_2$ be complete countable theories. We say $T_1 \tlf T_2$ if for every infinite $\lambda$, every regular ultrafilter 
$\de$ on $\lambda$, every model $M_1 \models T_1$, every model $M_2 \models T_2$,   ~ if $(M_2)^\lambda/\de$ is $\lambda^+$-saturated, 
then $(M_1)^\lambda/\de$ is $\lambda^+$-saturated. 
\end{defn}

We remind the reader of Keisler's result that if $\de$ is a regular ultrafilter on $\lambda$ and $M \equiv N$ in a countable language, then 
$M^\lambda/\de$ is $\lambda^+$-saturated iff $N^\lambda/\de$ is $\lambda^+$-saturated. \emph{Thus}, the choice of $M_1$ and $M_2$ in 
Definition \ref{d:keisler} is only important up to elementary equivalence. 
A further introduction to Keisler's order can be found in the recent lecture notes \cite{AST} sections 2-3 and 
in sections 1-2 of \cite{MiSh:1030}. 

\begin{conv} \emph{ } In what follows: 
\begin{enumerate}
\item All theories are complete and countable, unless otherwise stated. 
\item When $\de$ is a regular ultrafilter on $I$ and $T$ is a theory, we will say ``$\de$ is good for $T$'' to mean that for some 
$($equivalently every$)$ model $M \models T$, the ultrapower $M^I/\de$ is $|I|^+$-saturated. 
\end{enumerate}
\end{conv}

Our recent work on simple theories started with an idea in \cite{MiSh:999} to increase the range of ultrafilter construction. The idea is to build regular 
ultrafilters in two stages: by building a regular filter $\de_0$ on $I$ so that the quotient Boolean algebra $\mcp(I)/\de_0$ is 
isomorphic to some specific Boolean algebra $\ba$, and then building an ultrafilter $\de_*$ on $\ba$ (which, a priori, need not be regular), 
finally combining $\de_0$ and $\de_*$ in the natural way to obtain an ultrafilter $\de$ on $I$. 

A key lemma, ``separation of variables,'' says that in this setup there is a natural translation between realizing types in the ultrapower $M^I/\de$ and 
showing that certain related patterns, called ``possibility patterns,'' have multiplicative refinements in $\de_*$ (see below).   As a result, 
in many subsequent saturation-of-ultrapowers arguments it is most convenient to work in the 
Boolean algebra $\ba$, a completion of a free boolean algebra. 

\begin{defn} \label{d:ba} $\ba^0_{2^\lambda,  \mu, \theta}$ is the free Boolean algebra generated by $2^\lambda$ independent partitions 
each of size $\mu$, where intersections of $<\theta$ elements of distinct partitions are nonempty. 
For a boolean algebra $\ba$, let $\ba^1$ denote its completion, and let $\ba^+$ denote its nonzero elements.  
\end{defn}

\begin{fact} \label{fact-iff}
Assuming $\mu \leq \lambda = \lambda^{<\theta}$, we have that 
$\ba^0_{2^\lambda, \mu, \theta}$ and thus its completion exists. 
In this paper,  to find $D_0, \de_*, \jj$ as in \ref{d:built} below, we use the completion. 
\end{fact}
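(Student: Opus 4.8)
The plan is to observe that the first assertion of Fact~\ref{fact-iff} merely unwinds Definition~\ref{d:ba}, and then to exhibit a concrete model. So I would begin by fixing what a witness must be: a Boolean algebra $B$ carrying elements $x^\alpha_i$ (for $\alpha<2^\lambda$, $i<\mu$) such that (i) for each $\alpha$ the family $\{x^\alpha_i:i<\mu\}$ consists of $\mu$ pairwise-distinct, nonzero, pairwise-disjoint elements with supremum $1_B$, and (ii) for every $u\in[2^\lambda]^{<\theta}$ and every $f\colon u\to\mu$ the meet $\bigwedge_{\alpha\in u}x^\alpha_{f(\alpha)}$ exists in $B$ and is nonzero; and with $B$ generated by the $x^\alpha_i$, together with the meets in (ii) when $\theta>\aleph_0$, subject only to the relations this forces. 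When $\theta=\aleph_0$, clause (ii) is just ordinary independence of the $2^\lambda$ partitions and no infinite meets occur.

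Next I would build the model inside a power set. Let $X$ be the set of all functions from $2^\lambda$ to $\mu$; for $\alpha<2^\lambda$ and $i<\mu$ put $A^\alpha_i=\{\eta\in X:\eta(\alpha)=i\}$, and for a partial function $f$ from $2^\lambda$ to $\mu$ with $|\dom f|<\theta$ put $[f]=\{\eta\in X:\eta\rstr\dom f=f\}=\bigcap_{\alpha\in\dom f}A^\alpha_{f(\alpha)}$. Let $\ba^0$ be the Boolean subalgebra of $\mcp(X)$ generated by $\{A^\alpha_i:\alpha<2^\lambda,\ i<\mu\}\cup\{[f]:|\dom f|<\theta\}$ (equivalently, one may take it inside the regular-open algebra of the poset of all such $f$ ordered by reverse inclusion). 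In $\mcp(X)$ the disjointness, distinctness and nonvanishing required by (i) are immediate, and the supremum $\bigvee_{i<\mu}A^\alpha_i$ is still $X$ when computed in $\ba^0$, because any nonempty member of $\ba^0$ contains some $\eta$ and hence meets $A^\alpha_{\eta(\alpha)}$; each $[f]$ is nonempty; and $[f]$ is exactly the meet $\bigwedge_{\alpha\in\dom f}A^\alpha_{f(\alpha)}$ as computed in $\ba^0$, since $[f]$ is such a lower bound and any $b\in\ba^0$ below all the $A^\alpha_{f(\alpha)}$ ($\alpha\in\dom f$) satisfies $b\subseteq\bigcap_{\alpha\in\dom f}A^\alpha_{f(\alpha)}=[f]$. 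Freeness is then clear, since the only relations holding among the $A^\alpha_i$ in $\mcp(X)$ are the partition relations and the $[f]$ were adjoined precisely to name the meets demanded in (ii).

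To pass to the completion I would invoke the standard facts that every Boolean algebra has a Dedekind--MacNeille completion and that the canonical embedding preserves all existing suprema and infima; thus the completion $\ba^1$ of $\ba^0$ exists, each $\{A^\alpha_i:i<\mu\}$ is still a partition of $1$ in $\ba^1$, and for $|u|<\theta$ the element $[f]$ is still the (nonzero) meet of $\{A^\alpha_{f(\alpha)}:\alpha\in u\}$ there, so (i) and (ii) persist. The hypothesis $\mu\le\lambda=\lambda^{<\theta}$ enters only to bound the size: it forces $\theta\le\lambda^+$, whence $(2^\lambda)^{<\theta}=2^\lambda$ and $\mu^{<\theta}\le\lambda^{<\theta}=\lambda$, so $\ba^0$ is generated by $2^\lambda$ partition elements together with at most $(2^\lambda)^{<\theta}\cdot\mu^{<\theta}=2^\lambda$ boxes, giving $|\ba^0|\le 2^\lambda$, the size bound relevant to placing $\ba^1$ inside the separation-of-variables data of~\ref{d:built}. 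The second sentence of the Fact needs no argument; it merely records that it is the completion $\ba^1$, not $\ba^0$ itself, that plays that role.

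The one point that is not purely formal is the claim, made in the second paragraph, that it is \emph{enough} to adjoin the boxes $[f]$: when $\theta>\aleph_0$ this is exactly what keeps the $<\theta$-fold meets of partition elements from collapsing in the completion. Indeed, in the completion of the \emph{plain} free Boolean algebra on the $2^\lambda$ partitions an infinite meet of distinct generators equals $0$, since every nonzero element of that algebra depends on only finitely many coordinates and so fails to lie below one of the chosen generators; thus the content of the Fact when $\theta>\aleph_0$ is precisely that the boxes $[f]$ are present as genuine nonzero elements whose meet-role is preserved by the MacNeille completion, and the lower-bound computation above is what secures this. I expect that to be the main thing to get right; the remaining verifications are routine.
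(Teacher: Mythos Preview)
Your construction is correct for the literal statement of the Fact, but the paper's ``proof'' is simply a string of citations to Engelking--Kar\l owicz, Fichtenholz--Kantorovich, Hausdorff, and Shelah's \emph{Classification Theory}. So you have done more work than the paper does here, and by a different route: rather than citing the classical theorem on independent families, you build the algebra directly inside $\mcp({}^{2^\lambda}\mu)$ and then complete.

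The one point worth flagging is what the cited references actually deliver, versus what your construction delivers. The Engelking--Kar\l owicz theorem (and its relatives) gives an independent family of $2^\lambda$ functions from $\lambda$ to $\mu$ --- that is, the partitions live on a ground set of size $\lambda$, not on the much larger ${}^{2^\lambda}\mu$. The paper's discussion immediately after the Fact spells this out. This is exactly what is needed later to produce the surjection $\jj\colon\mcp(I)\to\ba$ with $|I|=\lambda$ in Definition~\ref{d:built}, and it is the genuine reason the hypothesis $\mu\le\lambda=\lambda^{<\theta}$ appears. Your construction works on any product space regardless of that hypothesis; you invoke the hypothesis only to bound $|\ba^0|$ by $2^\lambda$, which is true but somewhat beside the point. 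So while your argument establishes the abstract existence claimed in the first sentence, it does not by itself supply the data the paper is really pointing toward, and your remark that the second sentence ``needs no argument'' slightly undersells what the references are being cited for.
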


\begin{proof}
See Engelking-Karlowicz \cite{ek}, Fichtenholz and Kantorovich\cite{f-k}, Hausdorff \cite{hausdorff}, 
or Shelah \cite{Sh:c} Appendix, Theorem 1.5. 
\end{proof}

The following discussion may help the picture. Consider $\ba^0_{2^\lambda, \mu, \theta}$ as generated by 
$2^\lambda$ independent antichains each of size $\mu$, which we may enumerate as:
\[ \langle \mx_{\alpha, \epsilon} : \alpha < 2^\lambda, \epsilon < \mu \rangle. \]
For each fixed $\alpha$, $\langle \mx_{\alpha, \epsilon} : \epsilon < \mu \rangle$ is an antichain, so 
\[ \ma_{\alpha, \epsilon} \cap \ma_{\alpha, \epsilon^\prime} = 0 \mbox{ for } \epsilon < \epsilon^\prime < \mu.  \]
Otherwise, the intersection of any $<\theta$ elements no two of which are on the same antichain is nonzero.   
A compact way to denote such an intersection, say of 
\begin{equation}
\label{e:basis} 
 \bigcap_{\ell < \beta}  \mx_{\alpha_{i_\ell} } 
\end{equation}
where $\beta < \theta$ and the sequence $\langle \alpha_{i_\ell} : \ell < \beta \rangle$ is without repetition, is as follows. 
Consider the function $f = \{ (\alpha_{i_\ell}, \epsilon_{i_\ell}) : \ell < \beta \}$ whose domain is an element of 
$[2^\lambda]^{<\theta}$ and which assigns to each element of its domain (the index for an antichain) a value less than $\mu$ 
(the specific element of the antichain). Then let 
\begin{equation}
\label{e:first} \mbox{``}\mx_f \mbox{''}=  \bigcap_{\ell < \beta}  \mx_{\alpha_{i_\ell}} > 0. 
\end{equation}
Now consider the completion $\ba^1_{2^\lambda, \mu, \theta}$. As the generators are dense in the completion, 
for every nonzero $\mc$, there is some nonzero element $\leq \mc$ which is an intersection of the form (\ref{e:basis}).

For the present paper, the reader could forget the quotes around $\mx_f$ without much loss, but 
we should also explain the general notation. 
In \ref{fact-iff} above, assuming $\mu \leq \lambda = \lambda^{<\theta}$, what one proves is the existence of a family of functions 
$\gee \subseteq {^\lambda \mu}$ of size $2^\lambda$, let us write it as 
$\{ g_\alpha : \alpha < 2^\lambda \}$, which is \emph{independent} in the following sense.  Given any distinct 
$\langle \alpha_{i_\ell} : \ell < \beta < \theta \rangle$ and any values $\langle \epsilon_{i_\ell} : \ell < \beta \rangle$ such that 
for each $\ell < \beta$, $i_\ell$ is in the range of $g_{\alpha_{i_\ell}}$, we have that the set 
\begin{equation} \label{e:antichain}
\{ t < \lambda : g_{\alpha_{i_\ell}}(t) = \epsilon_{i_\ell} \mbox{ for all $\ell < \beta$} \} \neq \emptyset. 
\end{equation}
In this setup, each function $g \in \gee$ has domain $\lambda$ and range $\mu$ and so naturally gives rise to a partition of $\lambda$ into $\mu$ pieces, which we can represent as 
an antichain as above.  Then the intersection (\ref{e:antichain}) is naturally described by a function, call it $f$, 
which assigns $g_{\alpha_{i_\ell}}$ to $\epsilon_{i_\ell}$ for each $\ell < \theta$, so $\mx_f$  is just the nonempty set (intersection) in 
(\ref{e:antichain}).  
So indeed (\ref{e:antichain}) is parallel to (\ref{e:first}),  the only difference is whether we take the domain of $f$ to be 
the set of indices for the antichains or, more generally, the set of functions giving them.  This explains the definition \ref{d:fin} in the general case. 

We summarize this discussion with the following notation:

\begin{defn}  \label{d:fin} Let 
\[ \fin_{\mu,\theta}(2^\lambda) = \{ h : h \mbox{ is a function,} \dom(h) \subseteq 2^\lambda, \rn(h) \subseteq \mu, |\dom(h)| < \theta \}. \]
For $g \in \fin_{\mu,\theta}(2^\lambda)$, let $\mx_g$ denote the corresponding nonzero element of $\ba$. 
\end{defn}

\begin{conv} 
We will assume that giving $\ba = \ba^1_{\alpha, \mu, \theta}$ determines $\alpha, \mu, \theta$ and a set of generators 
$\langle \mx_f : f \in \fin_{\mu,\theta}(\alpha) \rangle$. 
\end{conv}

Note that for any $f \in \fin_{\mu,\theta}(2^\lambda)$, $\mx_f > 0$. 
As the generators are dense in the completion, we will often apply this in the form  ``as $\mc \in (\ba^1_{2^\lambda, \mu, \theta})^+$, we may choose $g \in \fin_{\mu,\theta}(2^\lambda)$ so that $\mx_g \leq \mc$.''
$\ba$ is generated freely by its generators except for the following conditions: first,  
if $f \subseteq f^\prime  \in \fin_{\mu,\theta}(2^\lambda)$, then 
$\mx_f \geq \mx_{f^\prime}$, because we are adding more conditions to the intersection. Second, 
for any $f, f^\prime \in \fin_{\mu,\theta}(2^\lambda)$, $\mx_f \cap \mx_{f^\prime} > 0$ if and only if $f, f^\prime$ agree on their 
common domain (this includes the case where their common domain is empty).

\br

\noindent Next we recall the setup of separation of variables: 

\begin{defn}[Regular ultrafilters built from tuples, from \cite{MiSh:999} Theorem 6.13] \label{d:built}
Suppose $\de$ is a regular ultrafilter on $I$, $|I| = \lambda$. We say that $\de$ is built from 
$(\de_0, \ba, \de_*)$ when: 

\begin{enumerate}
\item {$\de_0$ is a regular, $|I|^+$-excellent filter on $I$} 
\\ {$($for our purposes here, it is sufficient to use regular and good$)$}
\item {$\ba$ is a Boolean algebra}
\item {$\de_*$ is an ultrafilter on $\ba$}
\item {there exists a surjective homomorphism $\jj : \mcp(I) \rightarrow \ba$ such that:}
\begin{enumerate}
\item $\de_0 = \jj^{-1}(\{ 1_\ba \})$ 
\item $\de = \{ A \subseteq I : \jj(A) \in \de_* \}$.
\end{enumerate}
\end{enumerate}
\end{defn}

\begin{thm-lit}[``Separation of variables'', \cite{MiSh:999} Theorem 6.13] \label{t:separation}
Suppose that $\de$ is a regular ultrafilter on $I$ built from $(\de_0, \ba, \de_*)$. Then the following are equivalent: 
\begin{itemize}
\item[(A)] $\de_*$ is $(|I|, \ba, T)$-moral, see \cite{MiSh:999} Definition $6.3$. 
\item[(B)] $\de$ is good for $T$. 
\end{itemize}
\end{thm-lit}

The definition of ``moral'' is a bit long to quote, but can be easily summarized by saying that the way we will use Theorem \ref{t:separation} 
is the following. Suppose $\de$ is a regular ultrafilter on $I$, $|I| = \lambda$, $\de$ is built from $(\de_0, \ba, \de_*)$, $M \models T$, and 
$N = M^I/\de$ is the ultrapower. Suppose $p = \{ \vp_\alpha(x,\bar{a}_\alpha) : \alpha < \lambda \}$  is a type in the ultrapower. 
Let $\bar{B} = \langle B_u : u \in [\lambda]^{<\aleph_0} \rangle$ be a sequence of elements of $\de$ such that for each finite $u$, 
\[ B_u = \{ t \in I : M \models \exists x \bigwedge_{\alpha \in u} \vp_\alpha(x,\bar{a}_\alpha[t]) \}. \]
Let $\bar{\mb} = \langle \mb_u : u \in [\lambda]^{<\aleph_0} \rangle$ be a sequence of elements of $\de_*$ such that 
$\jj(B_u) = \mb_u$ for each $u$.  Then to show $p$ is realized, it will suffice to show that $\bar{\mb}$ has a multiplicative 
refinement in $\ba$, i.e. there exists a sequence $\langle \mb^\prime_{u} : u \in [\lambda]^{<\aleph_0} \rangle$ of elements 
of $\de_*$ such that for each finite $u$, 
$\mb^\prime_u \leq \mb_u$, and such that for each finite $u$ and $v$, $\mb^\prime_u \cap \mb^\prime_v = 
\mb^\prime_{u \cup v}$.

\br
The next claim verifies that we can generally extract many compatible elements from a large enough subset in our Boolean algebra. 

\begin{claim} \label{intersections} Let $\ba = \ba^1_{2^\lambda, \mu, \aleph_0}$. 
Suppose $\langle g_\alpha : \alpha \in \uu \rangle$ is a subset of $\fin_{\mu,\aleph_0}(2^\lambda)$, where $\uu$ is of cardinality $>\mu$, and 
$m <\omega$.  Then there is $u \subseteq \uu$, $|u| = m$ such that $\bigcup_{\alpha \in u} g_\alpha$ is a function. 
\end{claim}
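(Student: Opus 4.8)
The plan is a $\Delta$-system (``sunflower'') argument followed by pigeonhole. Observe first that a family of finite partial functions from $2^\lambda$ to $\mu$ has the property that its union is a single function exactly when the functions are pairwise \emph{compatible}, i.e.\ any two of them agree on the intersection of their domains (if $(x,y_1)$ and $(x,y_2)$ lie in the union, say $(x,y_1)\in g_\alpha$ and $(x,y_2)\in g_\beta$, then $x\in\dom(g_\alpha)\cap\dom(g_\beta)$, so $y_1=y_2$). So it suffices to produce $u\subseteq\uu$ with $|u|=m$ such that $g_\alpha,g_\beta$ are compatible for all $\alpha,\beta\in u$. Here $\mu$ is infinite (as throughout, with $\aleph_0\le\mu\le\lambda$), so $\mu^+$ is a regular uncountable cardinal, and since $|\uu|>\mu$ we may fix $\uu_0\subseteq\uu$ with $|\uu_0|=\mu^+$.

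Next I would apply the $\Delta$-system lemma to the family $\langle\dom(g_\alpha):\alpha\in\uu_0\rangle$ of finite subsets of $2^\lambda$. Since $\mu^+$ is regular and uncountable, there are $\uu_1\subseteq\uu_0$ with $|\uu_1|=\mu^+$ and a finite set $r\subseteq 2^\lambda$ such that $\dom(g_\alpha)\cap\dom(g_\beta)=r$ for all distinct $\alpha,\beta\in\uu_1$. (If the domains $\dom(g_\alpha)$ take fewer than $\mu^+$ distinct values, some value is attained $\mu^+$ times and we already have a sunflower with that domain as its core; otherwise the $\Delta$-system lemma applies directly and we pull the indices back.) Then for distinct $\alpha,\beta\in\uu_1$, the functions $g_\alpha$ and $g_\beta$ are compatible if and only if $g_\alpha\rstr r=g_\beta\rstr r$.

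Finally, the pigeonhole step: the number of functions from $r$ into $\mu$ is $\mu^{|r|}=\mu$, since $r$ is finite and $\mu$ is infinite, so $\alpha\mapsto g_\alpha\rstr r$ partitions $\uu_1$ into at most $\mu$ classes; as $|\uu_1|=\mu^+>\mu$, one class $\uu_2\subseteq\uu_1$ has size $\mu^+$. By construction the functions $\langle g_\alpha:\alpha\in\uu_2\rangle$ are pairwise compatible, so choosing any $u\subseteq\uu_2$ with $|u|=m$ (possible since $\mu^+>\aleph_0>m$) makes $\bigcup_{\alpha\in u}g_\alpha$ a function, as required.

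There is essentially no hard step; the only points needing a little care are (i) that $\mu$ is infinite, which is what makes $\mu^{|r|}=\mu$ and makes $\mu^+$ an uncountable regular cardinal to which the $\Delta$-system lemma applies, and (ii) the elementary observation that pairwise compatibility of partial functions is precisely what is needed for the union to be a function. One could alternatively argue by induction on $m$, but the $\Delta$-system route is shorter and in fact yields the stronger statement that $\mu^+$ of the $g_\alpha$ can be taken pairwise compatible at once.
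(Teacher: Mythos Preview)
Your proof is correct. It is, however, a genuinely different route from the paper's primary argument, though one the paper explicitly acknowledges: the paper closes its own proof with the parenthetical remark ``This proof is also easy by the $\Delta$-system lemma.''

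The paper's main argument is more Boolean-algebraic: it proceeds by induction on $n\leq m$, at each stage building a maximal antichain of elements $\mx_{f_{\alpha}}$ inside the previous stage's set, invoking the $\mu^+$-c.c.\ of $\ba$ to ensure the antichain has length $<\mu^+$, and then using pigeonhole to find $>\mu$ remaining indices whose elements are all compatible with a single antichain member $\mx_{f_{\beta_n}}$. The chosen $\beta_0,\dots,\beta_m$ then form a descending chain $\mx_{f^m_{\beta_m}}\leq\cdots\leq\mx_{f^0_{\beta_0}}$ in $\ba^+$, from which compatibility of the original $g_{\beta_i}$'s follows.

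Your $\Delta$-system approach is more purely combinatorial: it never touches $\ba$ and works entirely with the partial functions themselves. It is shorter, and as you note, it yields the stronger conclusion that $\mu^+$ of the $g_\alpha$ are simultaneously pairwise compatible, not merely $m$ of them. The paper's argument, by contrast, is tailored to produce exactly $m$ compatible elements and illustrates the $\mu^+$-c.c.\ in action, which is perhaps pedagogically useful given the paper's broader aims.
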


\begin{proof} 
By induction on $n \leq m$ we will choose $\beta_n$,  
$\uu_n$, and $\langle f^n_\alpha : \alpha \in \uu_n \rangle$ such that: 
\begin{enumerate}
\item $\beta_n \in \uu_{n-1}$, where $\uu_{-1} = \uu$
\item $\uu_n \subseteq \uu$ is of cardinality $>\mu$
\item $f^n_\alpha \in \fin_{\mu,\aleph_0}(2^\lambda)$ and $f^n_\alpha \supseteq g_\alpha$, for all $\alpha \in \uu$
\item $n < n^\prime$ implies $f^n_\alpha \subseteq f^{n^\prime}_\alpha$
\item $n < n^\prime$ implies $\uu_{n^\prime} \subseteq \uu_n$
\item for every $\alpha \in \uu_n$, $\mx_{f^n_\alpha} \leq \mx_{f_{\beta_n}}$ 
\item $\beta_n \notin \uu_n$  
\end{enumerate}
We will repeatedly use the fact that no set of size $>\mu$ is covered by a union of $\leq \mu$ sets of cardinality $\leq \mu$. 

At stage $-1$, $f^{-1}_\alpha = g_\alpha$, $\uu_{-1} = \uu$, and let ``$\mx_{f_{\beta_{-1}}}$'' mean $1_\ba$. 

At stage $n \geq 0$, by induction on $t$ we try to build a maximal antichain of $\mx_{f_{\alpha_{n-1}}}$'s, 
say $\langle \mx_{f_{\alpha_{i_t}}} : t < t_* \rangle$, where each $f_{\alpha_i} \supseteq f^{n-1}_{\alpha_i}$ for some $\alpha_i \in \uu_{n-1}$. 
This must stop at $t_*$ an ordinal $< \mu^+$ as $\ba$ has the $\mu^+$-c.c.   
For each remaining $\alpha \in \uu_{n-1}$, the element $\mx_{f^{n-1}_\alpha}$ must 
be compatible with one of the elements of our antichain (since the construction could not continue), and since $\uu_{n-1}$ is of cardinality 
$>\mu$, there is a subset $\uu_n \subseteq \uu_{n-1}$ of cardinality $>\mu$ 
whose elements are all $\neq \beta_n$ compatible with a single element of the antichain. Let $\beta_n$ be the subscript of 
this single element. For all $\alpha \in \uu_n$, let $f^n_\alpha = f^{n-1}_\alpha \cup f^{n-1}_{\beta_n}$.  (For $\alpha \in \uu \setminus \uu_n$, 
let $f^n_\alpha = f^{n-1}_\alpha$.) So we can carry the construction. 

\noindent When we arrive to $m$, 
we have defined $f^m_{\beta_0}, \dots, f^m_{\beta_m}$ and by construction, 
\[ 0 <  \mx_{f^m_{\beta_m}} \leq \cdots \leq \mx_{f^0_{\beta_0}}. \]
Recalling (3) and (4) of the inductive hypothesis, we conclude that  
\[ \mx_{g_{\beta_0}} \cap \cdots \cap \mx_{g_{\beta_m}} > 0 \]
i.e. when $u = \{ \beta_0, \dots, \beta_m \}$, $\bigcup_{\beta \in u} g_{\beta} $ is a function, as desired. 
(This proof is also easy by the $\Delta$-system lemma.)
\end{proof}

\begin{ntn} \label{ntn-f}
Let $\mcf = \{ f : f $ a {strictly} {increasing} function from $\mathbb{N}$ to $\mathbb{N} \setminus \{ 0 \}$$\}$. 
In particular, $f(k) > k$ for all $k \in \mathbb{N}$. 
\end{ntn}

\vspace{5mm}

\section{Construction}

In this section we build our example, really a family of examples taking as a parameter an increasing function $f \in \mcf$ (see \ref{ntn-f}). We begin with the definition, and continue with a leisurely discussion of the types in models of this theory. 

\begin{ntn} \label{orig-ntn}  Let $f \in \mcf$. 
\begin{enumerate}
\item[(a)] For each $k<\omega$, let $\mct_k = \mct_{f,k} = \{ \eta : \eta \in {^k \omega} $ and $\ell < k \rightarrow \eta(\ell) \leq f(\ell) \}$.

\sbr
\item[(b)] Let $\mct_{[k]} = \mct_{f, [k]} = \bigcup_{j \leq k } \mct_j$.

\sbr
\item[(c)] Let $\mct = \mct_f =  \bigcup_{k<\omega} \mct_k$. 

\sbr
\item[(d)] Let $\lim (\mct) = \{ \eta \in {^\omega \omega } : \eta \rstr k \in \mct_k \mbox{ for all } k<\omega \}$.

\sbr
\item[(e)] We may write $\mct, \mct_k$, etc. omitting $f$ when it is clear from context. 

\end{enumerate}
\end{ntn}

Notation \ref{orig-ntn} gives us trees $\mct_{k}$ of various finite heights whose splitting is finite but growing: for example, 
the root has two successors, nodes at level 1 each have three successors, nodes at level 2 each have 5 successors. (It isn't 
crucial that the function is strictly increasing, but it should not be eventually constant.)  $\mct$ is the tree whose 
restriction to height $k$ is $\mct_k$ for all finite $k$. 

\begin{defn} \label{d:nice} 
We say $s \subseteq \mct_{[k]}$ is \emph{$k$-maximal} if either $k=0$, or $k>0$ and $s$ satisfies: 
it is nonempty and closed under initial segment, 
every maximal node belongs to $\mct_k$, for no $\rho \in \mct_{[k]}$ do we have  
$ \{ \rho^\smallfrown \langle \ell \rangle : \ell \leq f(\lgn(\rho)) \} \subseteq s $,  \emph{ and } $s$ is maximal subject to these conditions. 
\end{defn}

Definition \ref{d:nice} describes a subtree of $\mct_{[k]}$ which is maximal subject to containing no full splitting.  
This means: choose all but one immediate successors of the root; for every element you have chosen at level 1, 
choose all but one of its immediate successors; and so on. (Once an element is \emph{not} chosen, all of its successors 
later in the tree are ruled out.)  There is a natural partial order on these subtrees, as the next 
definition points out.

\begin{ntn} \label{n:ds} Let $f \in \mcf$.
\begin{enumerate}
\item[(f)] Let $\mcs_k = \mcs_{f,k} = \{ s : s \subseteq \mct_{[k]} \mbox{ is $k$-maximal} \}$. 

\sbr

\item[(g)] Let $\mcs = \mcs_f = \bigcup_k \mcs_k$. 

\sbr
\item[(h)] Let $\lim (\mcs) = \{ \nu : \nu = \langle s_k : k<\omega \rangle $ is such that $s_k \in \mcs_k$ for each $k<\omega$ and 
$s_k \subseteq s_{k^\prime}$ for all $k < k^\prime < \omega \}$. 

\sbr
\item[(i)] We may write $\mcs, \mcs_k$, etc. omitting $f$ when it is clear from context. 
\end{enumerate}
\end{ntn}

To emphasize that $\lim (\mcs)$ and $\lim (\mct)$ are parallel, we might think of elements of $\lim (\mct)$ as sequences 
$\langle \eta_k : k <\omega \rangle$ where each $\eta_k \in \mct_k$ and $k<k^\prime < \omega$ implies $\eta_k \tlf \eta_{k^\prime}$.  
Elements of $\lim (\mct)$ are branches of $\mct$ under the ordering $\tlf$, and 
elements of $\lim (\mcs)$ are branches of $\mcs$ under the ordering $\subseteq$.  In particular, since all of the $\mct_k$'s and 
$\mcs_k$'s are finite and $f$ is increasing, both $\mct$ and $\mcs$ are countable, and both $\lim (\mct)$ and $\lim (\mcs)$ have cardinality 
$\leq 2^{\aleph_0}$. 

\br

We will define our theory $\tsf$ as the model completion of the following universal theory $\tsfo$.   
Note that this theory is \emph{not} in the same language as $\mct$ and $\mcs$. We simply use a 
family of unary predicates to partition the universe of the model into $P$ and $Q$ and then further partition these 
to mirror $\mct$ and $\mcs$. [For example, if $\mct$ has a root at level 0 followed by three nodes at level 1, 
$P$ is partitioned into one piece by a predicate $P_0$, which is in turn partitioned into three pieces by 
predicates called $P_{00}, P_{01}, P_{02}$, and so forth.]  So the tree structure on each side is 
``hard coded'' into the picture without the complexity of having a definable partial order in the 
language. The remaining ingredient is $R \subseteq P \times Q$, described after the definition. 

\begin{defn} \label{c2} Let $f \in \mcf$.  We define $\tsfo$ as the following universal theory. 

\begin{enumerate}
\item[(1)] $\tau(\tsfo) = \{ P, Q, R, {P}_\eta, Q_s : \eta \in \mct,  s \in \mcs  \} $
\\ where $R$ is a binary predicate and the $Q_s$'s, $P_\eta$'s, $P, Q$ are unary predicates.

\item[(2)] for a $\tau$-model $M$, $M \models \tsfo$ iff:
\begin{enumerate}
\item[(a)] $P^M$, $Q^M$ is a partition of $M$
\item[(b)] for each $k<\omega$, $\langle P^M_\eta : \eta \in \mct_k \rangle$ is a partition of $P^M$ 

\item[(c)] for each $k<\omega$, $\langle Q^M_s : s \in \mcs_k \rangle$ is a partition of $Q^M$
\item[(d)] $\eta \tlf \eta^\prime \in \mct$ implies $P^M_{\eta^\prime} \subseteq P^M_{\eta}$ and 
$s \subseteq s^\prime \in \mcs$ implies $Q^M_{s^\prime} \subseteq Q^M_s$
\item[(e)] $R^M \subseteq Q^M \times P^M$ satisfies:
\begin{itemize}
\item if $\eta \in \mct_k$, and $b_\ell \in P^M_{\eta^\smallfrown\langle \ell \rangle}$ for $\ell \leq f(k)$, 
then 
\[ M \models \neg (\exists x)(Q(x) \land \bigwedge_{\ell \leq f(k)} R(x,b_\ell)). \]   
\item if $(b,a) \in R^M$ and $b \in Q_s$, $s \in \mcs_k$, $a \in P_\eta$, $\eta \in \mct_k$ then $\eta \in s$.
\end{itemize}

\end{enumerate}
\item[(3)] Let $\tsf$ be the model completion of $\tsfo$. 
\end{enumerate}
\end{defn}

\noindent A more detailed informal description: 

$Q$ and $P$  partition the domain. 
If we think of the elements of $P$ as being indexed by the leaves of some infinite, finitely branching tree whose branching at height $k$ is $f(k)$, then 
for each $\eta \in \mct_k$, $P_\eta$ names the elements in the cone above $\eta$. In particular, for each finite $k$ and $\eta \in \mct_k$,  
$P^M_{\eta^\smallfrown \langle \ell \rangle}$ for $\ell \leq f(k)$ partition $P^M_\eta$.  

$R$, the edge relation, relates elements of $Q$ to elements of $P$. 
The restriction on $R$ is that for each element $b \in Q$ and each $k<\omega$ and $\eta \in \mct_k$, 
$b$ cannot connect to some element in each piece of the partition 
$\{ P^M_{\eta^\smallfrown \langle \ell \rangle} : \ell \leq f(k) \}$.    So for each $b \in Q^M$, each $k<\omega$ and each $\eta \in \mct_k$, 
either $R(b,x)$ misses the set $P^M_\eta$ entirely, or else it misses one of $P^M_{\eta^\smallfrown \langle \ell \rangle}$ for $\ell \leq f(k)$. 

If we think of the elements of $Q_s$ as being indexed by the elements of $\mcs$, in other words, 
by some subtree of finite height which is not full, then the elements of $Q_s$ are only allowed to connect via $R$ to elements in 
$P_\eta$ where $\eta \in s$.  
Notice that these predicates name already definable sets (and so will help with quantifier elimination): 
$Q^M_s(b)$ means 
\[ \{ \eta \in \mct_k : M \models (\exists x) (R(b,x) \land P_\eta(x)) \} = s \in\mcs_k.  \]
Of course we can't formally quantify over $\eta \in \mct_k$, but for each $k$, there are only 
finitely many $P_\eta$'s to consider, so the set named by $Q_s$ is indeed a definable set. Condition (2)(c) uses that the $s$ are maximal in the 
sense of \ref{d:nice}. 

Finally, note that in models of $\tsf$, the model completion, 
whenever $R(b,x)$ is consistent with $P_\eta(x)$, \emph{exactly} one of the formulas $\{ P_{\eta^\smallfrown \langle \ell \rangle}(x) : \ell \leq f(k) \}$ will be inconsistent with $R(b,x)$. So the $Q_s$'s with $s$ maximal are indeed the relevant ones.

\begin{defn}
For each $k < \omega$, let $T_{0,f,k}$ be the universal theory which is the restriction of $T_{0,f}$ to 
$\tau_k = \{ P, Q, R, {P}_\eta, Q_s : \eta \in \mct_k,  s \in \mcs_k  \} $.
\end{defn}

\begin{obs} \label{tof}
For each $k < \omega$,  $T_{0,f,k}$  is a universal theory in a finite relational language. It has the joint embedding property and amalgamation: 
given models $M, N$, the model 
whose domain is the disjoint union of $\dom(M) \cap \dom(N)$, $\dom(M) \setminus \dom(N)$, and $\dom(N) \setminus \dom(M)$ is also a model of the theory. As a result, a  
model completion $T_{f,k}$ exists and is well defined, and complete. 
\end{obs}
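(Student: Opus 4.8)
The plan is to reduce the Observation to the standard Fra\"iss\'e/model-completion machinery, so that the real work is checking the hypotheses. The easy part first: the vocabulary $\tau_k$ has no function or constant symbols and is finite, since $\mct_k$ and $\mcs_k$ are finite ($f\in\mcf$ gives $|\mct_k|=\prod_{\ell<k}(f(\ell)+1)$, $\mct_{[k]}$ is a finite union of these, and $\mcs_k\subseteq\mathcal{P}(\mct_{[k]})$); and every clause of Definition~\ref{c2}(2) is a universal sentence --- the partition clauses (a)--(c), the containment clause (d), and the two clauses of (e), the latter saying ``no $Q$-element is $R$-joined to a full fan $\{P_{\rho^\smallfrown\langle\ell\rangle}:\ell\le f(\lgn\rho)\}$'' and ``the $P$-labels of the $R$-neighbours of a member of $Q_s$ all lie in $s$'' --- so their restrictions to $\tau_k$ are too, and $T_{0,f,k}$ is universal. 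Hence it is preserved under substructure, so its class of models has the hereditary property; and since $\tau_k$ is finite relational, there are only countably many finite (equivalently, finitely generated) models up to isomorphism.

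The substance is the ``free join'' assertion, which delivers the joint embedding property (the case $\dom M\cap\dom N=\emptyset$) and amalgamation (the case where $A:=M\cap N$ is a common substructure of $M,N$) at one stroke. Given $M,N\models T_{0,f,k}$ with common substructure $A$, form $M\oplus_A N$ on $\dom M\cup\dom N$ by interpreting each unary predicate as the union of its two interpretations and $R$ as $R^M\cup R^N$, adding no edges across the sides. That $M,N$ are substructures is routine: for a unary $U$, $U^{M\oplus_A N}\cap\dom M=U^M\cup(U^N\cap\dom A)=U^M$, and an $R^N$-edge with both ends in $\dom M$ lies in $R^N\cap(\dom A)^2=R^A\subseteq R^M$. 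The partition and containment clauses of (2) transfer routinely --- within each of $M,N$ the families partition, unions preserve covering and disjointness, and an element of $\dom A$ gets the same labels on both sides because $A$ is a common substructure --- and the second clause of (e) is immediate, since every edge of $M\oplus_A N$ comes from $M$ or $N$ carrying its endpoint labels.

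The clause that needs an idea --- and the step I expect to be the main obstacle --- is the ``no forbidden fan'' clause of (e). Suppose toward a contradiction that some $x\in Q^{M\oplus_A N}$ is $R$-joined to a full fan $\{b_\ell:\ell\le f(\lgn\eta)\}$ with $b_\ell\in P_{\eta^\smallfrown\langle\ell\rangle}$, say $x\in\dom M$. If $x\notin\dom A$, then $x$ has no $R$-neighbours on the $N$-side, so $x$ together with all the $b_\ell$ and their labels sits inside $M\models T_{0,f,k}$ --- contradiction. If $x\in\dom A$, let $s\in\mcs_k$ be the unique index with $x\in Q_s^M$; then $x\in Q_s^A$, and since $x$ lies in a unique $Q_{s'}^N$ with $x\in Q_{s'}^A$ and $A$ is a model, $s'=s$. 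By the second clause of (e) applied in $M$ and in $N$, every $R$-neighbour of $x$ --- in $M$, in $N$, hence in $M\oplus_A N$ --- has its $P$-label in $s$. But $s$ is $k$-maximal in the sense of Definition~\ref{d:nice}, so it contains no full fan; in particular $\eta^\smallfrown\langle 0\rangle,\dots,\eta^\smallfrown\langle f(\lgn\eta)\rangle$ cannot all belong to $s$ --- contradiction. Hence $M\oplus_A N\models T_{0,f,k}$. The point to highlight is exactly this: a shared vertex's $Q_s$-label is pinned down by the common substructure $A$, so the two halves of $M\oplus_A N$ cannot between them complete a fan that neither completes alone; everything else is bookkeeping.

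Finally, with $T_{0,f,k}$ universal in a finite relational vocabulary and its class of finite models enjoying the hereditary, joint embedding and amalgamation properties (and only countably many isomorphism types), I would invoke the standard dictionary between Fra\"iss\'e classes and model completions: the Fra\"iss\'e limit exists, its theory is $\aleph_0$-categorical and has quantifier elimination, and --- because $T_{0,f,k}$ has amalgamation --- this theory is the (necessarily unique) model completion $T_{f,k}$; being model-complete with the joint embedding property, it is complete.
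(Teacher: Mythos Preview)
Your proposal is correct and follows the same approach the paper indicates in the statement itself: the free join (disjoint-union-over-a-common-part) construction gives both JEP and amalgamation, and then standard Fra\"iss\'e/model-completion machinery yields $T_{f,k}$. The paper offers no separate proof beyond the one-line assertion embedded in the Observation, so your verification---particularly the point that a shared vertex's $Q_s$-label is determined by the common substructure $A$, and since $s \in \mcs_k$ is $k$-maximal it contains no full fan, so the two sides cannot jointly complete a forbidden configuration---is exactly the detail the paper leaves implicit.
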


\begin{claim} \label{c5} $\tsfo$ has a model completion $\tsf$. In fact $\tsfo$ is relational, has amalgamation 
and the joint embedding property. 
Moreover $\tsf$ is simple, has elimination of quantifiers
has no algebraicity, and has trivial forking. 
\end{claim}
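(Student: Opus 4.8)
The plan is to verify the claimed properties in sequence, leaning on the local pieces $T_{0,f,k}$ from Observation~\ref{tof} and then taking a limit. First I would establish that $\tsfo$ is relational (immediate from the signature in Definition~\ref{c2}(1)) and that it has the amalgamation property and joint embedding property. For amalgamation, the key observation is that the constraints in \ref{c2}(2) are all \emph{universal} and, crucially, \emph{local}: conditions (a)--(d) merely assert that one is looking at a partition refining the tree structure, and this is preserved under the disjoint-union-over-the-common-part construction indicated in \ref{tof}; condition (2)(e), the two restrictions on $R$, only ever forbids configurations, so if $M$ and $N$ each avoid the forbidden patterns, so does the amalgam (no new $R$-edges between $\dom(M)\setminus\dom(N)$ and $\dom(N)\setminus\dom(M)$ are added, so no new $b$ can acquire a full fan of successors, and the predicate-consistency clause in the second bullet is checked pointwise). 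Joint embedding is the special case where the common part is a single point or empty, after noting every model embeds into one with both $P$ and $Q$ nonempty. Given AP + JEP for a universal relational theory (with, for each finite sub-signature, finitely many isomorphism types of finitely generated — here finite — models, which follows since each $\mct_k,\mcs_k$ is finite), the standard Fra\"iss\'e/model-completion machinery yields that $\tsfo$ has a model completion $\tsf$ which is complete and has elimination of quantifiers; alternatively one exhibits $\tsf$ directly as $\bigcup_k T_{f,k}$ in the increasing union of signatures $\tau_k$, using \ref{tof}, and checks that a model is existentially closed iff every finite partial $R$-configuration consistent with \ref{c2}(2)(e) (and with the already-definable meaning of the $Q_s$) is realized.

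Next, \emph{no algebraicity}: in an existentially closed model, given any finite set $A$ and any consistent one-type over $A$ in the sort $P$ (resp.\ $Q$), I would show it has infinitely many realizations, because one can always add a fresh element to the appropriate $P_\eta$ (resp.\ $Q_s$) with a prescribed finite $R$-trace, as the only obstructions are the universal forbidden patterns, which never force a new point to coincide with an old one. Hence $\acl(A)=A$ for all $A$.

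For \emph{simplicity with trivial forking}, the cleanest route is to guess the right independence relation and verify it satisfies the axioms characterizing non-forking in a simple theory (invariance, finite character, local character, symmetry, transitivity, extension, and the independence theorem over models — Kim--Pillay style). The natural candidate is: $A \ind_C B$ iff $A\cap B\subseteq C$ together with the condition that no $R$-edge runs between $A\setminus C$ and $B\setminus C$ (equivalently, $\tp(A/BC)$ is the "generic/free" extension of $\tp(A/C)$ — put every new point in the piece dictated by its type over $C$ and add no new edges to $B$). One checks this relation is automorphism-invariant, has finite and local character (a type over $C$ only "remembers" finitely much, and forking is witnessed by a single new edge), is symmetric and transitive by inspection, and satisfies extension because given a type over $C$ we may always freely realize it over any larger set by the same "add fresh points, no new edges" principle used for non-algebraicity. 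The independence theorem: given $A\ind_C B$, a type $p$ over $CA$, and a type $q$ over $CB$ agreeing over $C$, amalgamate using AP of $\tsfo$ — the free amalgam adds no cross-edges, so it witnesses a common extension $\ind_C$ from both $A$ and $B$. By the Kim--Pillay theorem this relation \emph{is} non-forking, so $\tsf$ is simple; and since the relation is "combinatorially free" (determined purely by $A\cap B\subseteq C$ and absence of new edges, with no reference to any nontrivial closure operator), forking is trivial in the sense that $A\ind_C B$ iff $\acl(AC)\cap\acl(BC)=\acl(C)$, i.e.\ forking comes only from intersections.

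The main obstacle — and the step deserving genuine care rather than a wave of the hand — is the verification that the proposed independence relation is well-defined and satisfies the \emph{local character} and \emph{independence theorem} clauses, because one must be sure the forbidden-configuration clause \ref{c2}(2)(e), especially the first bullet (no element of $Q$ connecting to a full fan $\{P_{\eta^\smallfrown\langle\ell\rangle}:\ell\le f(k)\}$), interacts correctly with free amalgamation: the free amalgam introduces no new edges, so no new full fan can appear, but one must check that the \emph{predicates} $Q_s$ on old elements of $Q$ retain their correct (already-definable) meaning in the amalgam, which is exactly where the $k$-maximality of the $s\in\mcs_k$ in Definition~\ref{d:nice} is used, as flagged in the discussion after \ref{c2}. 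Once that compatibility is nailed down, quantifier elimination plus the Kim--Pillay characterization deliver simplicity, triviality of forking, and non-algebraicity together.
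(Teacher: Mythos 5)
Your overall framework (relational signature, free amalgamation and JEP for $\tsfo$, existence of the model completion via the increasing union $\bigcup_k T_{f,k}$, and triviality of $\acl$) tracks the paper closely; in fact the paper's own proof is only a sketch of Parts I--II (quantifier elimination and the convergence of the $T_{f,k}$'s), while you also attempt the simplicity and forking clauses. That extra ambition is where a genuine error enters.

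The proposed independence relation --- $A\downarrow_C B$ iff $A\cap B\subseteq C$ \emph{together with} ``no $R$-edge runs between $A\setminus C$ and $B\setminus C$'' --- is not nonforking independence, and the Kim--Pillay argument built on it breaks at \emph{local character}. Take a single element $a\in Q$ of the monster and a set $B\subseteq P$ of size $>\aleph_0=|T|$ all of whose elements are $R$-connected to $a$ (this is consistent: only a blocking set's worth of leaves is forbidden, and each remaining leaf supports arbitrarily many points). Any $C\subseteq B$ with $a\downarrow_C B$ in your sense must contain every $R$-neighbour of $a$ in $B$, so $C=B$, violating the bound $|C|\le|T|$. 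Thus Kim--Pillay cannot be applied to this relation, and the conclusion that it \emph{is} forking does not follow. The repair is standard: drop the edge condition and take $A\downarrow_C B$ iff $A\cap B\subseteq C$ (equivalently $\acl(AC)\cap\acl(BC)=\acl(C)$, since $\acl$ is trivial). The ``no new edges'' configuration is the \emph{free amalgam}, which is one representative witnessing a nonforking extension --- exactly what you want when verifying the independence theorem over a model (where, as you correctly sense and as the paper exploits in the proof of Theorem \ref{tnes}, the blocking-set data for $x$ is already pinned down by $\tp(x/M)$) --- but it is not the definition of $\downarrow$. Everything else in your write-up (locality of the forbidden configurations, that free amalgamation preserves (2)(e), the role of $k$-maximality in keeping the $Q_s$'s honest, the convergence needed because the language is infinite) is sound and consistent with the paper's Parts I--II.
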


\begin{proof}[Proof Sketch] There are two parts. 

\br

\noindent\emph{Part I.}
First, we briefly describe the two key cases for quantifier elimination, ignoring the atomic formulas $x=a$. 
\br

(a) given $P(y_0), \dots, P(y_n)$, does there exist $x$ s.t. $Q(x) \land \bigwedge_{i\leq n} R(x,y_i)$ ? 

Informally, the answer is no if and only if the $y_i$'s fall in all pieces of some successor partition, that is, 
for some $k < \omega$ and $\eta \in \mct_k$, for each $\ell \leq f(k+1)$ there is at least one $ i  \leq n$ such 
that $P^M_{\eta^\smallfrown\langle \ell \rangle}(y_i)$. {Since $f$ is increasing} (\ref{ntn-f}), we 
can choose the minimal $k_*$ such that $f(k_*) > n$ and it is then sufficient to ensure that this doesn't happen for any $\eta \in \mct_{[k_*]}$. 
Since the number of pieces in each successor partition is finite and $n$ is finite, this can be expressed by a disjunction of all legal possibilities. 

If we now modify (a) by adding finitely may conditions on the $y_i$'s, of the form $P_\eta(y_i)$ or $\neg P_\eta(y_i)$ for some $\eta \in \mct$,  
and by adding finitely many conditions on $x$ of the form $Q_s(x)$ or $\neg Q_s(x)$ for some $s \in \mcs$, these conditions 
simply affect which of all the possible ``legal possibilities'' remain legal. 

\br

(b) given $Q(y_0), \dots, Q(y_n)$, does there exist $x$ s.t. $P(x) \land \bigwedge_{i\leq n} R(y_i,x)$ ?

Let $k_*$ be minimal so that $f(k_*) > n$.   
Informally, the answer to (b) is yes if and only if for some $\ell \leq f(k_*+1)$, and some $\eta \in \mct_{k_*}$, 
$\bigwedge_{i \leq n} (R(y_i, x) \land P_{\eta^\smallfrown \langle \ell \rangle}(x) )$ is consistent.\footnote{Note this is equivalent to saying: for {every} $k \leq k_*$, for some $\ell \leq f(k +1)$, and some $\eta \in \mct_{k}$, 
we have that $\bigwedge_{i \leq n} (R(y_i, x) \land P_{\eta^\smallfrown \langle \ell \rangle}(x) )$ is consistent.}
(Why? We may verify by induction on $k \geq k_*$ that there is ${\eta_k} \in \mct_k$ such that 
$\bigwedge_{i \leq n} (R(y_i,x) \land P_{\eta_k}(x))$ is consistent. Let $\eta_{k_*} = \eta$. 
At $k > k_*$,  for each $i \leq n$, $R(y_i,x)$ will be inconsistent with precisely one of the formulas  
$\{ P^M_{{\eta_k}^\smallfrown\langle \ell \rangle} : \ell \leq f(k+1) \}$, so $n$ is not large enough to rule out one which works for all. Conversely, 
if the answer to (b) is no, we can bound the height of an inconsistency by noting that each $y_i$ which is consistent with $P_\nu$ can only miss one 
piece of the successor partition $\{ P^M_{\nu^\smallfrown\langle \ell \rangle} : \ell \leq f(\lgn(\nu)) \}$.) 

Since the number of pieces in each successor partition is finite and $n$ is finite, (b) can be expressed by a disjunction over all legal possibilities, using the 
possible types for the $y_i$ as expressed by the $Q_s$'s.  As before, if we modify (b) by adding conditions on the $y_i$'s expressing which 
pieces of which successor partitions they may miss (expressible by the $Q_s$'s) and by adding conditions on $x$ expressing in which pieces of which partitions it may be 
(expressible by the $P_\eta$'s), these conditions 
simply affect which of all the possible ``legal possibilities'' remain legal. 

In both (a) and (b) above, recall \ref{ntn-f} tells us that $k_* \leq n$. 

\br

\noindent\emph{Part II.}
Second, let us extend this analysis to show that
\begin{quotation}
\noindent $(\star)$ if $M$ is a model of $T_{f,k}$, $N$ is a model of $T_{f,k+1}$, 
and $\psi$ is a sentence of $\tau_{k}$ of length $\leq k$, then 
$M \models \psi \iff N \models \psi$. 
\end{quotation}
This will tell us that the sequence $\langle T_{f,k} : k <\omega \rangle$ from \ref{tof} converges to a theory we may call $T_f$, 
and we can conclude that for every 
formula $\vp(\bar{x})$ of $\tau$, for some quantifier free $\psi(\bar{x})$, for every $k<\omega$ large enough, we have 
\[ (\forall \bar{x}) (~ \psi(\bar{x}) \equiv \vp(\bar{x}) ~) \in T_{f}.\]
Fix $k< \omega$.  Let  
$f = \{ (a_\ell, b_\ell) : \ell < k \}$ be a partial one to one map from $M$ into $N$ with $|\dom(f)| \leq k$ such that 
for every atomic, or equivalently, quantifer-free formula $\psi(x_0, \dots, x_{k-1})$ of $\tau_k$, we have that 
$M \models \psi[a_0, \dots, a_{k-1}]$ if and only if $N \models \psi[b_0, \dots, b_{k-1}]$. The point is that $N$ and 
$M$ agree on the information up to level $k$, but $N$ has more information as it is a $\tau_{k+1}$-model. 
Let us show that for every $a \in M$ there is $b \in N$ such that 
$f \cup \{ (a,b) \}$ is a partial one to one map from $M$ into $N$ such that 
for every quantifier-free formula $\psi(x_0, \dots, x_{k})$ of $\tau_{k+1}$, we have that 
$M \models \psi[a_0, \dots, a_{k-1}, a]$ if and only if $N \models \psi[b_0, \dots, b_{k-1}, b]$. 
(The proof that for every $b \in N$, there is $a \in M$ such that the same holds, is similar.)

As equalities and non-connections are easy to handle, the two main cases to consider are positive instances of 
$R$, first when $Q(a)$ and second when $P(a)$.  We may assume $a \notin \{ a_i : i < k \}$ as otherwise it is trivial. 

Suppose that $Q(a)$ and $\{ P(a_i) : i < i_* \leq k \}$ (we can safely ignore the others in this case). 
Since $M$ is a model, $Q_s(a)$ for some $s \in \mcs_k$, and also for each 
$i < i_*$ there is $\eta_i \in \mct_k$ such that $P_{\eta_i}(a_i)$.  Suppose $R(a,a_i)$ for eack $i< i_*$. Then 
in $N$,  $P_{\eta_i}(b_i)$ for each $i<i_*$ by hypothesis, where we know that $\{ \eta_i : i < k \} \subseteq s$ so do not 
include any full splitting. Now in $N$, which is a $\tau_{k+1}$-structure, for each $i < i_*$ there is $\ell_i < f(k+1)$ such that 
\[ P_{{\eta_i}^\smallfrown \langle \ell_i \rangle} (b_i). \]
Then $| \{ \ell_i : i < i_* \} | \leq k$, so recalling (a) earlier in the proof, 
there must be $s^\prime$ such that $s \subseteq s^\prime \in \mcs_k$ and 
$\{ {\eta_i}^\smallfrown \langle \ell_i \rangle : i < k \} \subseteq \mcs^\prime$.  Then we may choose $b \in Q^N_{s^\prime}$ 
which relates to the $b_i$'s as desired.  

Suppose that $P(a)$ and suppose $\{ Q(a_i) : i < i_* \leq k\}$ (we can safely ignore the others in this case). 
Since $M$ is a model, $P_\eta(a)$ for some $\eta \in \mct_k$ and likewise for each $i < i_*$
$Q_{s_i}(a_i)$ for some $s_i \in \mcs_k$. 
Suppose $R(a_i, a)$ for each $i < i_*$. Then in $N$, being a $\tau_{k+1}$-structure, for each $i < i_*$ there 
is $s^\prime_i \in \mcs_{k+1}$ such that $s_i \subseteq s^\prime_i$ and
\[  \{ Q_{s^\prime_i}(b_i) : i < i_* \}. \]
The existence of $a$ tells us that $\eta \in s^\prime_i$ for each $i< i_*$, so as $i_* \leq k$, 
we may find $\ell < f(k+1)$ such that 
${\eta^\smallfrown \langle \ell \rangle} \in s^\prime_i$ for all $i<i_*$
Recalling (b) earlier in the proof, this tells us that 
\[ \bigwedge_{i < i_*}  Q_{s^\prime_i}(y_i) \land R(y_i,x) \land P_{\eta^\smallfrown \langle \ell \rangle}(x) \]
is consistent, and so we may choose $b$ in  $P^N_{\eta^\smallfrown \langle \ell \rangle}$ to complete the proof. 
\end{proof}

\br

Before proceeding to an analysis of saturation and non-saturation in ultrapowers of models of $\tsf$, let us briefly describe the types we will be dealing with. 
Recall that $\trg$ is the theory of the random graph. 

\begin{claim} \label{up-types}
Let $M \models \tsf$. Let $I$ be any infinite set, $|I| = \lambda$, and $\de$ a regular ultrafilter on $I$ which is good for $\trg$. Then to show $N = M^I/\de$ is $\lambda^+$-saturated, 
it suffices to show that $N$ realizes all partial types of the following form:

\begin{enumerate}
\item[(a)] $\{ Q(x) \} \cup \{ R(x,a) : a \in A \}$ for $A \subseteq P^N$, $|A| \leq \lambda$. 
\item[(b)] $\{ P(x) \} \cup \{ R(b,x) : b \in B \}$ for $B \subseteq Q^N$, $|B| \leq \lambda$. 
\end{enumerate}
\end{claim}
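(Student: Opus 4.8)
The plan is to reduce the saturation of $N = M^I/\de$ to the realization of the two distinguished families of types in (a) and (b). The standard fact (see e.g. \cite{MiSh:1030}) is that a regular ultrapower $N$ of a countable theory is $\lambda^+$-saturated iff it realizes all types over sets of size $\leq\lambda$, and by quantifier elimination (Claim \ref{c5}) it suffices to consider types generated by atomic and negated atomic formulas in one free variable $x$. So fix $p(x)$ a type over a set $A_0 \subseteq N$, $|A_0| \leq \lambda$. First I would dispose of the unary predicates: since the $P_\eta$'s and $Q_s$'s form (for each fixed level $k$) finite partitions, and the partitions at different levels cohere under $\tlf$ resp.\ $\subseteq$, the restriction of $p$ to the $P_\eta, Q_s$-formulas and to $P, Q$ either is already decided by a single consistent choice along a branch of $\lim(\mct)$ or $\lim(\mcs)$, or is inconsistent (in which case there is nothing to prove). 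Since $N$ is $\aleph_1$-saturated (every regular ultrapower of a countable structure is), and for each $k$ the relevant partition is finite and named, we can realize the "unary part'' of the type by a single element that also lies in the correct cone; so WLOG $p$ decides, consistently, that $x \in P$ (case (b)) or $x \in Q$ (case (a)), together with all its cone-memberships.

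Next I would handle the equalities and negated $R$-formulas. Negated equalities $x \neq a$ are harmless in a $\lambda^+$-saturated-enough ultrapower since $N$ is not finite. For the negated instances $\neg R(x,a)$ (in case (a)) or $\neg R(b,x)$ (in case (b)): here I would invoke that $\de$ is good for $\trg$ to absorb them. The key observation is that after fixing the cone-membership of $x$, the relation $R$ between $x$ (of one sort) and the elements of the other sort behaves, modulo the finitely-branching combinatorial constraints coming from $\tsfo(2)(e)$, like a bipartite random graph: any positive requirement $R(x,a)$ is constrained only by the forbidden-full-splitting condition, and any negative requirement $\neg R(x,a)$ is unconstrained. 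Because the constraints are "local'' (each involves only the finitely many immediate successors of a single node $\eta \in \mct_k$, and, via Observation \ref{tof}, are already visible at a finite level), one can peel off the negative requirements and the cone-constraints and see that the residual task is exactly to realize a type of the pure form $\{Q(x)\} \cup \{R(x,a) : a \in A\}$ with $A \subseteq P^N$ consistent with the constraints — or the dual — which is (a) resp.\ (b). Concretely: split $A_0$ into $A^+$ (elements $a$ with $R(x,a) \in p$) and $A^-$ (with $\neg R(x,a) \in p$); realize the type $\{Q(x)\} \cup \{R(x,a) : a \in A^+\}$ together with the cone-memberships by (a); then use goodness of $\de$ for $\trg$ (equivalently, that $\de$ handles the negated-edge type of the random graph, which is the minimal unstable type) to simultaneously omit the edges to $A^-$. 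This last "merge'' step is where one uses that $R$, restricted to not-forbidden pairs, is as free as a random bipartite graph, so a type asking for some edges and some non-edges is no harder than one asking only for the edges.

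I expect the main obstacle to be precisely this last reduction: making rigorous that the negated-$R$ and cone-membership parts of the type can be realized "for free'' given that the positive-$R$ part can be realized. The subtlety is that after committing $x$ to a particular cone $P_\eta$ (or $Q_s$), the forbidden-splitting condition interacts with which further $R(x,a)$ can be added, so one must be careful that choosing the cone for the sake of $A^-$ does not obstruct the edges to $A^+$ — but since $A^-$ imposes no cone constraint on $x$ at all (non-edges are always allowed) and $A^+$ does, the right order is: let $A^+$ and the unary part determine the cone, then freely omit $A^-$. I would make the "freely omit'' step precise by the standard random-graph argument: the set of $t \in I$ on which the finite approximations to $p$ (for a given finite subtype) are satisfiable is in $\de$ because $\de$ is good for $\trg$ and the bipartite edge relation restricted to a fixed cone is, in each coordinate $M$, a model of (a fragment of) the random graph over the relevant finite data — so separation of variables (Theorem \ref{t:separation}) or directly goodness for $\trg$ finishes it. The remaining bookkeeping — that the combinatorial constraints are finitely determined and hence do not accumulate — is exactly Observation \ref{tof} and the convergence $(\star)$ from the proof of Claim \ref{c5}, so it is routine.
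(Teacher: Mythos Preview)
Your overall shape is right, but there is a genuine gap in how you handle the unary predicates $P_\eta$, $Q_s$. You say you will ``dispose of'' them first and then ``realize the type $\{Q(x)\} \cup \{R(x,a): a \in A^+\}$ together with the cone-memberships by (a).'' But (a) gives you only a realization of the positive $R$-type, with no control over which $Q_s$ the realizing element lands in. If $p$ contains $Q_{s_0}(x)$ and only a few positive $R$-instances, a realization of those $R$-instances alone may well sit in some other $Q_{s_1}$ with $s_1 \neq s_0$. So your reduction, as written, does not actually reduce $p$ to a type of shape (a) or (b).

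The paper closes this gap by a move you are missing: before decomposing, it enlarges the parameter set so that for every relevant $\eta$ (resp.\ $s$) there is already a positive $R$-witness in the type --- conditions (ii), (iii) in the proof. With these witnesses in hand one checks directly that the positive $R$-part $p_1$ alone \emph{entails} the cone part $p_2$: e.g.\ if $P_\eta(x) \in p$ with $\lgn(\eta)=k$, then for every $s \in \mcs_k$ with $\eta \in s$ there is some $b_s \in Q^N_s$ with $R(b_s,x) \in p_1$, and $\{R(b_s,x): s \in \mcs_k,\ \eta \in s\}$ forces $P_\eta(x)$. This is the idea that makes the reduction to (a)/(b) legitimate, and it is absent from your sketch. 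Your handling of the negative instances and inequalities is also vaguer than it needs to be: rather than invoking goodness for $\trg$ abstractly, the paper wraps $A_1, A_0$ in disjoint pseudofinite internal sets $X_1, X_0$ and uses a single \Los-transferable sentence (``there is $c'$ agreeing with $c$ on $R$-edges into $X_1$ and with no $R$-edges into $X_0$'') to pass from a realization of $p_1$ to a realization of $p_1 \cup p_0 \cup p_3$.
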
 

\begin{proof}
Fix a model $N$. 
To analyze saturation of $N$, it suffices to consider $1$-types, so there are two cases: the type contains $P(x)$, or it contains $Q(x)$. 
Since the ultrafilter is good for the random graph, necessarily $\mu(\de) \geq \lambda^+$ (i.e. any pseudofinite set has size at least $\lambda^+$).
Recall that for models with countable vocabularies, which is always our case here, saturation of ultrapowers reduces to saturation of $\vp$-types 
(\cite{mm1}, Theorem 12). [So we may hope to not need to consider types with infinitely many distinct $Q_s$'s or $P_\eta$'s.]

Let $p \in \ts(C, N)$, $C \subseteq N$, $|C| \leq \lambda$, $p$ not algebraic. 

Without loss of generality, 
\begin{enumerate}
\item[(i)] $N \rstr C \preceq N$.
\item[(ii)] if $\eta \in \mct$, $Q(x) \in p(x)$ and $(\exists z) (P_\eta(z) \land R(x,z)) \in p$, then for some $a_\eta \in P^N_\eta$ we have 
$R(x,a_\eta) \in p$. 
\item[(iii)] if $s \in \mcs$, $P(x) \in p(x)$ and $(\exists z) (Q_s(z) \land R(z,x)) \in p$, then for some $b_s \in Q^N_s$ we have 
$R(b_s, x) \in p$. 
\end{enumerate}
[We can assume this because for all $\eta \in \mct$ and all $s \in \mcs$, $|P^N_\eta|$, $|Q^N_s|$ are $> \lambda$ because $\de$ is regular 
and $N = M^I/\de$.] 

Now let: 
\begin{enumerate}
\item[(a)] $A_1 = \{ a \in C : P(x) \land R(a,x) \in p $ or $ Q(x) \land R(x,a) \in p \}$. 
\item[(b)] $A_0 = \{ a \in C : P(x) \land \neg R(a,x) \in p $ or $ Q(x) \land \neg R(x,a) \in p \}$. 
\item[(c)] if $P(x) \in p(x)$ then let:
\begin{itemize}
\item $\eta = \eta_p \in \lim(\mct)$ be such that $p_2(x) := \{ P_{\eta \rstr \ell}(x) : \ell < \omega \} \subseteq p(x)$, 
\item $p_1(x) = \{ R(a,x) : a \in A_1 \}$, 
\item $p_0(x) = \{ \neg R(a,x) : a \in A_0 \}$, and 
\item $p_3(x) = \{ x \neq c : c \in C $ and $c$ realizes 
$\{ P_{\eta_p \rstr n}(x) : n < \omega \} \}$.
\end{itemize}
\item[(d)] if $Q(x) \in p(x)$ then let: 
\begin{itemize}
\item $\nu = \nu_p \in \lim(\mcs)$ be such that $p_2(x) := \{ Q_{\nu(\ell)}(x) : \ell < \omega \} \subseteq p(x)$, 
\item $p_1(x) = \{ R(x,a) : a \in A_1 \}$, and 
\item $p_0(x) = \{ \neg R(x,a) : a \in A_0 \}$. 
\item $p_3(x) = \{ x \neq c : c \in C $ and $c$ realizes $\{ Q_{\nu_p \rstr n }(x) : n < \omega \} \}$.
\end{itemize}
\end{enumerate}
The $p$ is equivalent to $p_1(x) \cup p_0(x) \cup p_2(x) \cup p_3(x)$, by elimination of quantifiers and our assumptions (i),(ii),(iii). 

Now if $\eta \in \mct$, $P_\eta(x) \in p$ then $p_1(x) \vdash P_\eta(x)$. 
[Why? Let $k = \lgn(\eta)$ and let $\mcs^* = \{ s \in \mcs_k : \eta \in s \} \subseteq \mcs_k$. So for each $s \in \mcs^*$, there is 
$b_s \in Q^N_s$ such that $R(b_s,x) \in p$ by (ii), so it is easy to see that 
$\{ R(b_s,x) : s \in \mcs^* \} \vdash P_\eta(x)$.]

Similarly, if $Q_s(x) \in p(x)$ for $s \in \mcs$ then $p_1(x) \vdash Q_s(x)$. 
So in both cases,
\[ p_1(x) \vdash p_2(x). \]
It remains to handle $p_0 \cup p_3$. Suppose, then, that our ultrapower realizes $p_1(x)$.  
The sets $A_1, A_0 \subseteq N$ defined in (a), (b) are disjoint and both of size $\leq \lambda$. 
As $\de$ is regular, there are disjoint pseudo-finite internal sets $X_0$, $X_1$, $X_2$ such that $A_\ell \subseteq X_\ell$
for $\ell = 0, 1$ and $X_0 \cap X_1 = \emptyset$ and $X_2$ disjoint to $X_0 \cup X_1$ (even, $\subseteq P$ if 
$X_0 \cup X_1 \subseteq Q$, etc). Suppose $c \in N$ realizes $p_1(x)$. There are two 
cases depending on whether $Q(x) \in p$ or $P(x) \in p$. If $Q(x) \in p$, there is $c^\prime \in N$ such that:

\begin{quotation}
\begin{itemize}
\item $(\forall y \in X_1) (R(c^\prime,y) \iff R(c, y))$
\item $(\forall y \in X_0)(\neg R(c^\prime, y))$.
\end{itemize}
\end{quotation}
So $c^\prime$ realizes $p_1(x)$, $p_2(x)$, $p_0(x)$ and even $p_3(x)$, so we are done.  If $P(x) \in p$, replace $R(c^\prime,y)$ by $R(y,c^\prime)$ and 
$R(c,y)$ by $R(y,c)$ in the above quotation.  This completes the proof. 
\end{proof}

\vspace{5mm}

\section{A non-saturation result for $\tsf$} \label{t:ns}

\begin{ntn}
When $\langle a_\alpha : \alpha < \kappa \rangle$, and $w \subseteq \kappa$, write $\bar{a}_w$ to mean $\langle a_\alpha : \alpha \in w \rangle$. 
\end{ntn}

Recall also our notation for $\mct$, $\mct_{[k]}$, etc. from \ref{orig-ntn}. 

\begin{lemma} \label{c8}
Let $f \in \mcf$. 
Let $\lambda$ be any infinite cardinal and $\mu < \kappa$.  
Then no ultrafilter $\de$ of $\ba = \ba^1_{2^\lambda, \mu, \aleph_0}$ is $(\kappa^+, \tsf)$-moral. 
\end{lemma}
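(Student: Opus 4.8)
The plan is to exploit the fundamental tension in $\tsf$: on the $P$-side, each branch $\eta \in \lim(\mct)$ specifies an element whose $R$-neighborhood in $Q$ can only see the part of $Q$ coded by $s$'s containing the relevant nodes, while the forbidden-configuration clause in \ref{c2}(2)(e) forces that along any node $\eta \in \mct_k$, an element $b \in Q$ cannot connect to representatives of all $f(k)+1$ immediate successors of $\eta$. Concretely, I would fix $\mu < \kappa$ and suppose toward a contradiction that $\de_*$ is a $(\kappa^+, \ba, \tsf)$-moral ultrafilter on $\ba = \ba^1_{2^\lambda, \mu, \aleph_0}$. By \ref{t:separation}, morality means that for every possibility pattern coming from a type in a model of $\tsf$ over a parameter set of size $\le \kappa^+$, the associated sequence $\bar{\mb} = \langle \mb_u : u \in [\kappa^+]^{<\aleph_0}\rangle$ of elements of $\de_*$ has a multiplicative refinement in $\ba$. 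So I need to exhibit a single such pattern with \emph{no} multiplicative refinement.

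The type I would use lives on the $Q$-side (case (a) of \ref{up-types}): take a model $M \models \tsf$ and, inside $M^I/\de$, choose parameters $\langle a_\alpha : \alpha < \kappa^+\rangle \subseteq P^N$ arranged so that they realize a ``fat'' distribution across the tree — roughly, for a fixed branch direction we want, for each level $k$ and each $\eta \in \mct_k$ that is an initial segment of the intended branch, to have among the $a_\alpha$'s at least one element in each cone $P^N_{\eta^\smallfrown\langle\ell\rangle}$ for every $\ell \le f(k)$, with these witnesses indexed by $\kappa^+$-many disjoint blocks so that finitely many of them can always be found far out. The type $p(x) = \{Q(x)\} \cup \{R(x,a_\alpha) : \alpha < \kappa^+\}$ is then finitely satisfiable: any finite subset $u$ involves parameters hitting only finitely many successor-cones, and since $f$ is strictly increasing (\ref{ntn-f}) one can always dodge one successor at each of the finitely many relevant levels and find a witness $b_u \in Q^N$. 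That gives the sets $B_u \in \de$ and hence $\mb_u = \jj(B_u) \in \de_*$. The key point is that a \emph{multiplicative} refinement $\langle \mb'_u\rangle$ would have to be coherent across \emph{all} finite $u$ simultaneously, which in the Boolean algebra $\ba$ forces the existence, below a fixed generator $\mx_g$ with $g \in \fin_{\mu,\aleph_0}(2^\lambda)$, of witnesses to connecting to all $f(k)+1$ successors of some $\eta \in \mct_k$ at once — contradicting clause \ref{c2}(2)(e) inside any model.

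The mechanism that converts "multiplicative in $\ba$'' into "realized in a model'' and back is where Claim \ref{intersections} does the work: since $|{\uu}| = \kappa^+ > \mu$ and $\ba$ has the $\mu^+$-c.c., from any $\mu^+$-sized family of the $\mb'_u$-supporting generators one can extract $m$-element subsets whose associated functions are compatible, i.e., whose meet is nonzero, for $m = f(k)+1$ with $k$ chosen large. Tracking which parameters $a_\alpha$ those compatible generators "remember,'' compatibility of the $m$ pieces of the pattern says precisely that some $b$ in some model of $T_{0,f}$ connects to one representative of each of $f(k)+1$ distinct immediate successors of a common node $\eta$, which \ref{c2}(2)(e) forbids — so $\mb'_{u_0} \cap \cdots \cap \mb'_{u_{m-1}} = \mb'_{u_0 \cup \cdots \cup u_{m-1}}$ would be $0$, contradicting $\mb'_v \in \de_*$ (ultrafilters contain only nonzero elements). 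The main obstacle, and the step I would spend the most care on, is the bookkeeping in setting up the parameter distribution: one must choose the $a_\alpha$ and organize $[\kappa^+]^{<\aleph_0}$ into blocks so that (i) every finite $u$ genuinely has a model-witness $b_u$ (using $k_* \le |u|$ from \ref{ntn-f}), yet (ii) the multiplicativity equations, pushed through Claim \ref{intersections}, unavoidably produce a full successor-splitting configuration. Getting $\mu < \kappa$ to be exactly the hypothesis used — $\mu^+$-c.c. of $\ba$ versus a parameter set of size $\kappa^+ > \mu$ — is the quantitative heart of the argument.
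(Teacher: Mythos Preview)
Your overall strategy matches the paper's: a $Q$-side type $\{R(x,a_\alpha):\alpha<\kappa\}$, assume a multiplicative refinement $\bar{\mb}'$, pull $\mx_{g_\alpha}\le\mb'_{\{\alpha\}}$, invoke Claim~\ref{intersections} on a $>\mu$-sized family, and manufacture a full successor-splitting below the refinement. But the setup you describe for the parameters is backwards, and this is the step you flagged as the main obstacle.

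You propose a ``fat distribution'': for a fixed node $\eta$, have among the $a_\alpha$'s one representative in each successor cone $P_{\eta^\smallfrown\langle\ell\rangle}$, $\ell\le f(k)$. But then $\{R(x,a_\alpha)\}$ is \emph{not} a partial type at all---that is exactly the forbidden configuration of \ref{c2}(2)(e). Your attempt to rescue this (``finitely many of them can always be found far out'') does not fix the problem: once the $a_\alpha$'s genuinely occupy all $f(k)+1$ successor cones of a single node, some finite subtype is already inconsistent, so there is no possibility pattern to refine.

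The paper does the opposite. All $a_\alpha$ sit on the \emph{same} branch $\rho_*=(0,0,\dots)$ in the ultrapower, so the type is trivially consistent. The trick is entirely in how the pattern is coded in $\ba$: one defines $\ma[P_{\rho^\smallfrown\langle\ell\rangle}(x_\alpha)] = \ma[P_\rho(x_\alpha)]\cap\mx_{g_{\alpha,k,\ell}}$ where $\dom(g_{\alpha,k,\ell})=\{\omega\alpha+k\}$. Thus the ``branch of $a_\alpha$ at level $k$'' is decided by an antichain at coordinate $\omega\alpha+k$, and these coordinates are pairwise distinct across $(\alpha,k)$. Now any $g_\alpha\in\fin_{\mu,\aleph_0}(2^\lambda)$ with $\mx_{g_\alpha}\le\mb'_{\{\alpha\}}$ has finite domain, so it constrains the branch of $a_\alpha$ only up to some level $k_\alpha$ (the maximal remainder mod $\omega$ in $\dom(g_\alpha)$). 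Pigeonhole on $k_\alpha$ to get a common $k_*$ on $\uu_1$ of size $>\mu$; pigeonhole again on which $\nu_*\in\mct_{k_*}$ is compatible with $\mx_{g_\alpha}$; then apply Claim~\ref{intersections} to get $m\ge f(k_*+1)+1$ compatible $g^*_\alpha$'s. Since $\dom(g^*)=\bigcup g^*_\alpha$ contains no coordinate with remainder $k_*+1$, you may freely intersect with the level-$(k_*+1)$ generators to place each $a_{\alpha_\ell}$ in the $\ell$-th successor cone of $\nu_*$ while staying nonzero---and that nonzero element lies below $\bigcap\mb'_{\{\alpha_\ell\}}=\mb'_u$ but not below $\mb_u$.

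So the missing idea is: the parameters are uniform in the ultrapower; the full splitting is produced only \emph{after} the refinement is chosen, by exploiting that each $g_\alpha$ is blind to all but finitely many levels of its own $a_\alpha$.
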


\begin{proof}
\setcounter{equation}{0}
We use the setup of separation of variables, so we have in mind the background set $I = \lambda$, a homomorphism 
$\jj: \mcp(\lambda) \rightarrow \ba$, an ultrafilter $\de_*$ on $\ba$, and also a model $M \models \tsf$ and an ultrapower $M^I/\de$. 
We will build a possibility pattern in the variables 
$x$ and $\langle x_\alpha : \alpha  < \kappa \rangle$ and show it doesn't have a multiplicative 
refinement.\footnote{The proof will take place entirely in $\ba$, but it is simply asserting that we could build a type $\{ R(x,a_\alpha) : \alpha < \kappa \}$ in the ultrapower $M^I/\de$, i.e. we 
could choose our parameters $a_\alpha$, in such a way that each $\ma[\vp(\bar{x}_w)]$ in the proof below is really the image of 
$\{ t \in I : M \models \vp[\bar{a}_w[t]] \}$ under $\jj$, and likewise for each finite $u \subseteq \kappa$, $\mb_u$ is the image of 
$\{ t \in I : M \models \exists x \bigwedge_{\alpha \in u } R(x,a_\alpha[t]) \}$ under $\jj$.}

Recall that $\ba^+ = \ba \setminus \{ 0 \}$. 
\begin{equation} \label{ind-constr}
\mbox{For each $\rho \in \mct_k$ and $\alpha < \kappa$,  
define $\ma[P_{\rho}(x_\alpha)] \in \ba^+$ by induction on $k<\omega$:}
\end{equation}
\begin{itemize}
\item[(a)] if $k= 0$, i.e. $\rho \rstr k = \langle \rangle$,   $\ma[P_{\langle \rangle}(x_\alpha)] = 1_\ba$.
\item[(b)] for $k$ and $i \leq f(k)$, 
let $g_{\alpha, k, i}$ be the function with domain $\{ \omega \alpha +  k \}$ such that  
$g_{\alpha, k, i} (\omega \alpha +  k) = i$.  
Then for $\rho \in \mct_k$, $\ell \leq f(k)$ define   
\[ \ma[P_{\rho ~^\smallfrown \langle \ell \rangle}(x_\alpha)] = \ma[P_{\rho} (x_\alpha)] \cap \mx_{g_{\alpha, k, \ell}}. \]
\item[(c)] for $\alpha \neq \beta < \kappa$, 
\[ \ma[x_\alpha \neq x_\beta] = 1_\ba. \]
\end{itemize}
Without loss of generality we assume 
\begin{equation} \label{star-2}
\mx_{g_{\alpha, k, 0}} \in \de_* \mbox{ for } \alpha < \kappa.
\end{equation}
By (\ref{ind-constr})(c), 
\begin{equation} \label{star-3}
\ma[x_\alpha \neq x_\beta] \in \de_*
\end{equation} 
and by quantifier elimination, 
$\ma[\vp(\bar{x}_v)]$ is determined for any finite $v \subseteq \kappa$ and any $\vp(\bar{x}_{v})$ in the language of $T$.  
It follows that 
for each $\alpha < \kappa$ and each $k < \omega$, $\ma[P_{\langle 0_k \rangle}(x_\alpha)] \in \de_*$, where $\langle 0_k \rangle$ is $\langle 0, \dots, 0 \rangle$ ($k$ times). 
By (\ref{ind-constr}), (\ref{star-2}), (\ref{star-3}), 
the sequence 
\begin{equation}
\bar{\mb} = \langle \mb_u : u \in [\kappa]^{<\aleph_0} \rangle~ \mbox{ where } \mb_u = \ma[\exists x \bigwedge_{\alpha \in u } R(x, x_\alpha)] 
\end{equation}
is a possibility pattern. [That is, in the related ultrapower we are considering the type 
$\{ R(x,a_\alpha) : \alpha < \kappa \}$ where the $a_\alpha$'s are pairwise distinct realizing the type 
$\{ P_{\rho_*\rstr k}(x) : k < \omega \}$ where $\rho_* = (0,0, \dots)$.\footnote{The intent is that the elements $a_\alpha$ \underline{all} satisfy the 
``constant zero'' branch in the ultrapower, so $p(x)$ is clearly a type. The potential problem in realizing the type is that in each index model, 
by our construction (\ref{ind-constr}), the elements $a_\alpha[t]$ may project to different branches ``$\de$-rarely'', possibly violating the condition on splitting.}]

Towards contradiction assume 
\begin{equation} 
\label{b-prime}
\bar{\mb}^\prime = \langle \mb^\prime_u : u \in [\kappa]^{<\aleph_0} \rangle
\end{equation} is a multiplicative refinement of $\bar{\mb}$. 

For $\alpha < \kappa$ choose $g_\alpha \in \fin_{\aleph_0}(2^\lambda)$ such that $\mx_{g_\alpha} \leq \mb^\prime_{\{\alpha\}}$. 
We will repeatedly use the fact that no set of size $>\mu$ (e.g. no set of size $\kappa$) is covered by a countable union of sets of cardinality $\leq \mu$. 
Given an ordinal $\beta$, let us say ``the remainder of $\beta$ mod $\omega$ is $k$'' to mean that $\beta = \omega \gamma + k$ for some ordinal $\gamma$ and integer $k$. 

Since each $g_\alpha$ has finite domain, there is a smallest integer $k_\alpha$ such that for every $\beta \in \dom(g_\alpha)$, the remainder of 
$\beta$ mod $\omega$ is $\leq k_\alpha$. 
As $\mct_*$ has cardinality $>\mu$, there is some finite $k_*$ such that $\uu_1 = \{ \alpha < \kappa : k_\alpha = k_* \}$ is of cardinality $>\mu$. 

For each $\alpha \in \uu_1$, the elements 
\[   \{ \ma[P_{\nu}(x_\alpha)] : \nu \in \mct_{k_*} \} \]
form a maximal antichain of $\mb_\alpha$. 
For each $\alpha \in \uu_1$, choose $\nu_\alpha \in \mct_{k_*}$ such that 
\[ \mx_{g_\alpha} \cap \ma[P_{\nu_\alpha}(x_\alpha)] > 0. \]
By the construction in (\ref{ind-constr}) above, we can translate as follows: 
letting 
\[ g^*_\alpha = g_\alpha \cup \bigcup_{k \leq k_*} \{ (\omega \alpha + k, \nu_\alpha(k) ) \} \]
we have that 
\begin{equation}
\label{eq51} 
\mx_{g^*_\alpha} = \mx_{g_\alpha} \cap \ma[P_{\nu_\alpha}(x_\alpha)] > 0. 
\end{equation}
Moreover, for each $\alpha \in \uu_1$, it is still the case that 
for every $\beta \in \dom(g^*_\alpha)$, the remainder of $\beta$ mod $\omega$ is still $\leq k_*$. 
Since $\mct_{k_*}$ is finite, for some $\nu_* \in \mct_{k_*}$, 
\[ \uu_2 = \{  \alpha \in \uu_1 : \nu_\alpha = \nu_* \} \]
is of cardinality $>\mu$. 

Choose some very large finite $m$ (at least $f(k_*+1)$ suffices, see below). Apply Claim \ref{intersections} to $\langle g^*_\alpha : \alpha \in \uu_2 \rangle$ and $m$, so the Claim says: \emph{Suppose $\langle g^*_\alpha : \alpha \in \uu_2 \rangle$ 
is a subset of $\fin_{\mu,\aleph_0}(2^\lambda)$, where $\uu_2$ is of cardinality $>\mu$, and 
$m <\omega$.  Then there is $u \subseteq \uu_2$, $|u| = m$ such that $\bigcup_{\alpha \in u} g^*_\alpha$ is a function.} 

Let $u \subseteq \uu_2$ be as returned by Claim \ref{intersections}. Then $g^* = \bigcup_{\alpha \in u} g^*_\alpha$ is a function, and 
it is still the case that for all $\beta \in \dom(g^*)$, the remainder of $\beta$ mod $\omega$ is $\leq k_*$. 
Since we assumed (for a contradiction) that $\bar{\mb}^\prime$ is multiplicative and refines $\bar{\mb}$, and each $\mx_{g_\alpha} \leq \mb^\prime_\alpha$, 
we have that for every $v \subseteq u$, 
\begin{equation}
\label{e52}
0 <  \mx_{g^*} \leq \bigcap_{\alpha \in v} \mx_{g_\alpha}  \leq \bigcap_{\alpha \in v} \mb^\prime_\alpha = \mb^\prime_v \leq \mb_v. 
\end{equation}
Thus, for every $\ell \leq f(k_*)$ and every $\alpha \in u$,  
\[ \dom(g^*) \cap   \{ \omega \alpha + k_* + 1\} = \emptyset \]
and clearly for $\alpha \neq \alpha^\prime \in u$, 
\[ \{ \omega \alpha + k_* + 1 \} \cap \{ \omega \alpha^\prime + k_* + 1 \} = \emptyset. \]
So for any $f(k_*+1)$ distinct $\alpha$'s in $u$, say we enumerate them as $\langle \alpha_{i_\ell} : \ell \leq f(k_* +1) \rangle$, 
\[ \mx_{g_*} \cap  \bigcap_{\ell \leq f(k_*+1)} \ma[P_{\nu_*~ ^\smallfrown \langle \ell \rangle}(x_{\alpha_{i_\ell}})] > 0 \]
in other words, recalling (\ref{eq51}), 
\[ \bigcap_{\ell \leq f(k_*)} \mx_{g_{\alpha_\ell}} \cap \bigcap_{\ell \leq f(k_*)} \ma[P_{\nu^*}(x_{\alpha_{i_\ell}})] \cap  \bigcap_{\ell \leq f(k_*+1)} \ma[P_{\nu_*~ ^\smallfrown \langle \ell \rangle}(x_{\alpha_{i_\ell}})] > 0 \]
Rewriting $v = \{ \alpha_{i_\ell} : \ell \leq f(k_*) \} \subseteq u$, this contradicts the fact from (\ref{e52}) that 
$\bigcap_{\alpha \in v} \mx_{g^*_\alpha} \subseteq \bigcap_{\alpha \in v} \mx_{g_\alpha} \leq \mb_v$. 
So there can be no such $\bar{\mb}^\prime$, which completes the proof.  
\end{proof}

\vspace{5mm}

\section{$\tsf$ is explicitly simple}

This section proves Theorem \ref{tnes}, but its aim is equally or even primarily pedagogical, to exposit a way of measuring simple theories.  
In \cite{MiSh:1030} we defined ``explicit simplicity,'' a way of stratifying the complexity of simple theories using cardinals 
$(\lambda, \mu, \theta, \sigma)$ satisfying Definition \ref{d:suitable} below. 
This is motivated in the introduction to \cite{MiSh:1030}, \S 3. 
The definition of ``$(\lambda, \mu, \theta, \sigma)$-explicitly simple'' implies that 
$T$ is simple, and it follows from the definitions that this becomes weaker as $\mu$ increases. \cite{MiSh:1030} Theorem 4.10 had shown that this definition holds for any simple theory $T$ 
when $|T| < \sigma$ and we use the largest nontrivial number of ``colors,'' $\mu^+=\lambda$. 

This new characterization of 
simplicity suggested a program of stratifying simple theories according to the necessary value of $\mu$. For the random graph one color is enough, but it 
turned out in \cite{MiSh:1050} that the case, of, say, the tetrahedron-free three-hypergraph, 
needs either $\mu^+ = \lambda$ or $\mu^{++}=\lambda$.  
Moreover, the idea of the infinite descending chain of \cite{MiSh:1050} is essentially to look for theories whose $\mu$ must satisfy $\mu < \lambda \leq \mu^{+n}$ 
for larger and larger finite $n$,\footnote{This is motivation, not a mathematical statement: \cite{MiSh:1050} isn't carried out in the language of explicit simplicity, 
since there we had a concrete family of theories with trivial forking and so could simplify 
the definitions somewhat, e.g. dropping the requirement that closures be submodels.} inspired by ideas on free sets in set mappings and the 
Kuratowski-Sierpinski characterization of the $\aleph_n$'s (for more on set mappings, see \cite{ehmr}, \cite{KoSh:645}). 

We left as an open question, 
Question 10.1 of \cite{MiSh:1030}, whether it was possible to build a simple theory where $\mu$, the range of the coloring function, 
is truly uncountable but does not depend on $\lambda$. 

In this section we show that $\tsf$ is \emph{not} such an example. In light of \S \ref{t:ns}, Theorem \ref{tnes} tells us that when $\sigma > |T|$, uncountable $\mu$ is not necessary to be in a class strictly above the random graph. 
This is a delicate point: it highlights that 
the definition of explicit simplicity \emph{requires} $\sigma > |T|$, with the closure of a finite set 
(in the relevant algebra) giving rise to an elementary submodel.   So $\sigma > |T|$ is an assumption in Theorem \ref{tnes} even though 
$\theta > |T|$ would seem more natural for our case. 
For, as we will see, $\sigma > |T|$ and  knowing a type of $\tsf$ over an elementary submodel is already enough 
to control consistency of its automorphic images.  Indeed, if $\sigma$ were finite, the story would be different\footnote{It is natural to give a general definition of explicit simplicity for $\sigma = \aleph_0$ which 
allows the closure of finite sets to be finite, and applies to theories with trivial forking, see after the proof.}: we will see at the end of the proof below a suggested 
strategy for saturation of ultrapowers in \S \ref{s:sat}. 

We now review the setup.  

\begin{defn}[\cite{MiSh:1030} Definition 1.1] \label{d:suitable}
Call $(\lambda, \mu, \theta, \sigma)$ \emph{suitable} when:
\begin{enumerate}
\item $\sigma \leq \theta \leq \mu < \lambda$
\item $\theta$ is regular, $\mu = \mu^{<\theta}$ and $\lambda = \lambda^{<\theta}$
\item $(\forall \alpha < \theta)(2^{|\alpha|} < \mu)$.
\end{enumerate}
\end{defn}

For Theorem \ref{tnes}, the needed definitions will be given in the course of the proof. 

\begin{theorem}\label{tnes} The theory $\tsf$ is $(\lambda, \mu, \theta, \sigma)$-explicitly simple whenever $(\lambda, \mu, \theta, \sigma)$ 
is suitable and $\sigma > |\tsf|$. 
\end{theorem}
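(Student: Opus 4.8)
The plan is to verify the definition of ``$(\lambda,\mu,\theta,\sigma)$-explicitly simple'' from \cite{MiSh:1030} directly for $\tsf$, exploiting that $\tsf$ is simple with trivial forking, has quantifier elimination, and has no algebraicity (Claim \ref{c5}). The core object in that definition is a suitable algebra whose elements are (roughly) the elementary submodels of a monster model $\mn\models\tsf$ of a prescribed size, equipped with a closure operation $\cl$ assigning to each set $A$ with $|A|<\theta$ a submodel $\cl(A)\preceq\mn$ of size $<\mu$, together with an ``amalgamation/locality'' requirement saying that a type over $\cl(A)$ determines whether finitely many of its automorphic images are jointly consistent. So the first step is to fix $\cl$: given $A$ with $|A|<\theta$, I would take $\cl(A)$ to be the Skolem-hull-style closure obtained by adding, for each finite tuple $\bar b$ from what we have so far and each consistent formula $\vp(x,\bar b)$ over the current set that is ``decided by branches'' (i.e., involving only finitely many $P_\eta$, $Q_s$), a witness; because $\tsf$ has elimination of quantifiers, trivial forking, and no algebraicity, this closes off after $<\theta$ steps and yields $\cl(A)\preceq\mn$ with $|\cl(A)|\le|A|+|T|<\mu$ (here using $\sigma>|T|$, hence $\mu\ge\theta\ge\sigma>|T|$), and $\cl$ is monotone, idempotent, and finitary in the appropriate sense.

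Next I would check the ``explicit simplicity'' inequalities/coloring data. The key structural input is the analysis in Claim \ref{up-types}: a $1$-type over a submodel $N_0\preceq\mn$ decomposes as $p_1\cup p_0\cup p_2\cup p_3$, where $p_2$ (the branch data, a single $\eta\in\lim(\mct)$ or $\nu\in\lim(\mcs)$) is already \emph{implied} by $p_1$ (the positive $R$-edges), and $p_0$ (negative edges) $\cup\,p_3$ (the nonalgebraicity clauses) can always be arranged after $p_1$ is realized, since $R$ behaves like a generic bipartite graph subject only to the tree constraints. Translating this to the language of \cite{MiSh:1030}: the ``nonforking pattern'' of a type over a submodel is governed by the positive-edge part together with one branch, and the relevant combinatorics — which finite sets of automorphic copies of $p$ can be simultaneously realized — reduce to the purely finitary constraint in Definition \ref{c2}(2)(e) (no full $f(k)$-splitting above any $\eta\in\mct_k$ can be hit). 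I would encode the coloring function so that its range lands in a set of size $\le|T|+\aleph_0<\mu$ (indeed countable), which is the whole point of the theorem: uncountable $\mu$ is never forced. Concretely, the ``color'' of a finite configuration is just the finite amount of tree data $(\eta\rstr k, s)$ it determines, of which there are countably many.

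The main obstacle, I expect, is the amalgamation/consistency clause of explicit simplicity: one must show that, given types $p_i$ over $\cl(A_i)$ that are pairwise ``compatible'' in the bookkeeping sense, their union (or the union of finitely many automorphic images) is consistent — and that this is \emph{detected} by the coloring, not just true. Here the delicate point flagged in the text before Definition \ref{d:suitable} is crucial: $\sigma>|T|$ forces $\cl$ of a finite set to be an elementary submodel, and a type of $\tsf$ over an elementary submodel already pins down, via $p_2$ being implied by $p_1$ and via (2)(e), exactly which automorphic images coexist. So the argument is: knowing $p\rstr\cl(\bar a)$ tells us the branch and the local splitting pattern; two images $\sigma_1(p),\sigma_2(p)$ are jointly consistent iff their (finitely much) tree data can be amalgamated in $\mcs$, which by $k$-maximality (Definition \ref{d:nice}) and the increasing $f$ is always possible once we pass to a high enough level $k_*$ with $f(k_*)>$ (number of images) — exactly the counting used in Part II of Claim \ref{c5} and in Lemma \ref{c8}. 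I would then assemble these pieces: (i) $\cl$ is a suitable closure; (ii) forking is trivial and hence the nonforking/independence requirements are vacuous or immediate; (iii) the coloring with countable range records the tree data; (iv) the amalgamation clause follows from generic amalgamation of $\tsfo$-models (Obs.\ \ref{tof}, Claim \ref{c5}) plus the level-counting argument. Finally I would remark, as the paper signals, that finite $\sigma$ would change the story and point toward the saturation strategy of \S\ref{s:sat}.
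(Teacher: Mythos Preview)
Your proposal is headed in the right direction but misses the key simplification that makes the paper's proof short, and your description of the framework from \cite{MiSh:1030} is not quite accurate.

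First, on the framework: a $(\lambda,\theta,\sigma)$-presentation is not ``an algebra whose elements are elementary submodels of $\mn$''. It is an enumeration $\langle \vp_\alpha(x,a^*_\alpha):\alpha<\lambda\rangle$ of the given type $p$ together with an algebra $\zm$ on the index set $\lambda$ (with $<\theta$ functions), arranged so that (among other things) $\clm(\emptyset)$ indexes a distinguished countable $M_*\preceq N$ and $p\rstr M_*$ is a complete type. The objects you then color are the quadruples $\xr=(u,w,q,r)\in\mcr_\xm$, where $r$ is a complete type extending $p\rstr M_*$. This last phrase is what you are not using.

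The paper's argument is much simpler than the coloring you propose. Because every $r$ in every $\xr\in\mcr_\xm$ must extend $p\rstr M_*$, the entire branch data is already fixed before any coloring happens: if $Q(x)\in p$, then $p\rstr M_*$ decides $Q_s(x)$ for every $s\in\mcs$, hence fixes the set of allowed leaves; if $P(x)\in p$, then $p\rstr M_*$ decides $P_{\eta_*\rstr k}(x)$ for all $k$, hence fixes the branch $\eta_*$. So for any sequence $\langle \xr_t\rangle$ and any good instantiation $\bar b^*$, the relevant formulas automatically assemble into a consistent type. The constant coloring (one color) works. You actually state this insight yourself --- ``a type of $\tsf$ over an elementary submodel already pins down\ldots exactly which automorphic images coexist'' --- but then do not follow it to its conclusion.

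Your proposed ``countable coloring by $(\eta\rstr k,s)$'' is therefore unnecessary, and as stated it is also underspecified: which $k$? The elements $\xr$ have $|w|<\theta$, which may be uncountable, and $q$ determines a full branch in $\lim(\mct)$ for each relevant coordinate, so truncating at a single finite level does not obviously control consistency across a sequence of $\xr_t$'s. When the paper \emph{does} explore a genuine coloring (in the Discussion after the proof, imagining that only finitely much of $p\rstr M_*$ were available), it colors by blocking sets, not by finite tree data, and the natural range there is $2^{\aleph_0}$ (co-finite blocking sets), not $\aleph_0$.
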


\begin{proof} 
We will simply follow \cite{MiSh:1030}, Section 3, but all relevant definitions have been quoted for ease of reading. 
We are aiming for Definition \ref{d:c3}; all the terms mentioned in the definition will be defined subsequently. 

\begin{quotation}
\begin{small}
\noindent
\begin{defn}[Explicitly simple, \cite{MiSh:1030} Definition 3.2] \label{d:c3} 
Assume $(\lambda, \mu, \theta, \sigma)$ are suitable. We say 
$T$ is $(\lambda, \mu, \theta, \sigma)$-\emph{explicitly simple} if $T$ is simple and 
for every $N \models T$, $||N|| = \lambda$, $p \in \ts(N)$ nonalgebraic, 
\begin{enumerate}
\item[(a)] there exists a $(\lambda, \theta, \sigma)$-presentation $\xm$ of $p$.

\item[(b)] for every $(\lambda, \theta, \sigma)$-presentation $\xm$ of $p$, there is a presentation $\xn$ of $p$ refining $\xm$ and a function $G: \mcr_{\xn} \rightarrow \mu$ 
such that $G$ is an intrinsic coloring of $\mcr_\xn$. 
\end{enumerate}
\end{defn}
\end{small}
\end{quotation}
$\tsf$ is indeed simple. So suppose we are given  $N \models T$, $||N|| = \lambda$, $p \in \ts(N)$ nonalgebraic.  
{We will have two cases: the case where $P(x) \in p$, and the case where $Q(x) \in p$. The only difference will come at the end.}

\begin{quotation}
\begin{small}
\noindent
\begin{defn}[Presentation, \cite{MiSh:1030} Definition 3.3] \label{d:pres}
Suppose we are given $N \models T$, $||N|| = \lambda$, and $p \in \ts(N)$. 
A \emph{$(\lambda, \theta, \sigma)$-presentation} for $p$ is the data of an enumeration and an algebra, 
\[ \xm = (\langle \vp_\alpha(x, a^*_\alpha) : \alpha < \lambda \rangle, \zm ) \]
where these objects satisfy: 

\begin{enumerate}
\item $p = \langle \vp_\alpha(x;a^*_\alpha) : \alpha < \lambda \rangle$ is an enumeration of $p$, 
which induces an enumeration $\langle a^*_\alpha : \alpha < \lambda \rangle$ of $\dom(N)$, possibly with repetitions, and with the $a^*_\alpha$ 
possibly imaginary. 

\item $\zm$ is an algebra on $\lambda$ with $<\theta$ functions. 

\item For any finite $u \subseteq \lambda$, $|\clm(u)| < \sigma$. Thus, for any $u \subseteq \lambda$, 
if $|u| < \sigma$ then $|\clm(u)| < \sigma$, and if $|u| < \theta$ then $|\clm(u)| < \theta$.

\item $\clm(\emptyset)$ is  
an infinite cardinal $\leq |T|$, so an initial segment of $\lambda$. 
\\ $M_* := N \rstr \{ a^*_\alpha : \alpha < \clm(\emptyset) \}$ is a distinguished elementary submodel of $N$, 
and we require that $p$ does not fork over $M_*$. 

\item Moreover, for each $u \in [\lambda]^{<\sigma}$, $N_u := N \rstr \{ a^*_\alpha : \alpha \in \clm(u) \}$ is an elementary submodel of $N$, 
and $\{ \vp_\alpha(x,a^*_\alpha) : \alpha \in \clm(u) \}$ is a complete type over this submodel which dnf over $M_*$. $($In particular, 
$\{ \vp_\alpha(x, a^*_\alpha) : \alpha \in \clm(\emptyset) \}$ is a complete type over $M_*$.$)$

\item If $\alpha \in \clm(u)$, $\beta \leq \alpha$, writing $A_\beta = \{ a^*_\gamma : \gamma < \beta \}$, we have that  
\\ $\tp(a^*_\alpha, A_\beta \cup M_*)$ does not fork over $\{a^*_\gamma : \gamma \in \clm(u) \cap \beta \} \cup M_*$.

\end{enumerate}
In a context where $(\lambda, \theta, \sigma)$ are given, ``presentation'' means ``$(\lambda, \theta, \sigma)$-presentation.''
\end{defn}
\end{small}
\end{quotation}
To show $p$ has a presentation, first fix a countable elementary submodel $M_* \preceq N$. Choose an enumeration 
$\langle \vp_\alpha(x,a^*_\alpha) : \alpha < \lambda \rangle$ of $p$ so that:
\begin{itemize}
\item $\{ a^*_\alpha: \alpha < \omega \} = \dom(M_*)$,
\item $\{ \vp(x,a^*_\alpha) : \alpha < \omega \} = p \rstr M_*$, 
\item and $\{ a^*_\alpha : \alpha < \lambda \} = \dom(N)$, 
\end{itemize}
noting the sequence $\langle a^*_\alpha : \alpha < \lambda \rangle$ may have repetitions. 
This is easily done as $x \neq a$ belongs to $p$ for every $a \in N$. 
For the algebra $\zm$, we add three kinds of functions. 
\begin{itemize}
\item First choose countably many unary functions $\{ g_n : n < \omega \}$, where $g_n$ is the constant function $n$, to 
ensure that the ``closure of the empty set''\footnote{we never consider $\emptyset$ as a base set, so this effectively is the 
set which is contained in every closure of every nontrivial set; or if you prefer, consider an algebra to be a structure on $\lambda$ with functions and no relations 
\emph{and a constant} (interpreted as any element of $\omega$).} is $\omega$. 
\item Second, choose countably many 
functions which are analogues of Skolem functions for $\tsf$. That is, for each formula $\vp(x,\bar{y})$ of $\ml(\tau(\tsf))$, 
let $f_{\vp(\bar{y})}$ be a new function symbol, and interpret these countably many new function symbols as Skolem functions for 
$\tsf$ in $N$, in each case choosing the witness $a^*_\alpha$ of smallest index $\alpha < \lambda$. 
Then for each $\vp(x,\bar{y})$ add to the algebra a new $\ell(\bar{y})$-place function symbol 
$g_{f_{\vp(x,\bar{y})}}$ which mirrors the action of the Skolem function on the indices $\lambda$:  $g_{f_{\vp}}(\bar{u}) = v$ only if 
$f_{\vp}(\bar{a}^*_u) = a^*_v$. 
\item Third, we want to ensure that the type restricted to closed sets is complete. For each $\psi(x,y) \in \{ R(x,y), x=y, Q(x) \land y=y, P(y) \} \cup \{ Q_s(x) : s \in \mcs \} \cup 
\{ P_\eta(y) : \eta \in \mct \}$, let $h_\psi(y)$ be defined so that $h_\psi(\alpha) = \beta$ if $\beta < \lambda$ is the least ordinal such that 
$\vp_\beta(x,a^*_\beta)$ is equivalent mod $T$ to either $\psi(x,a^*_\alpha)$ or its negation.\footnote{It's sufficient if the restriction of $p$ to a closed set generates a complete type. 
The reason to ask that $\{ \vp(x,a^*_\alpha) : \alpha < \omega \} = p \rstr M_*$ above with equality instead of $\vdash$ was just to ensure the closure of the empty set didn't grow.}
\end{itemize}
Since there is no nontrivial forking, this suffices, and 
$( \langle \vp_\alpha(x, a^*_\alpha) : \alpha < \lambda \rangle, \zm )$ is a presentation. 
\begin{quotation}
\begin{small}
\noindent
\begin{defn}[Refinements of presentations, \cite{MiSh:1030} Definition 3.6] \label{d:extend} 
Suppose we are given $N \models T$, $||N|| = \lambda$, and $p \in \ts(N)$. Let $\xm = (\bar{\vp}_\xm, \zm_\xm)$, 
$\xn = (\bar{\vp}_\xn, \zm_\xn)$ be presentations of $p$. 
We say that \emph{$\xn$ refines $\xm$} when: 
\begin{enumerate}
\item[(a)] $\bar{\vp}_\xm = \bar{\vp}_\xn$. 
\item[(b)] $\operatorname{cl}_{\zm_\xm}(\emptyset) = \operatorname{cl}_{\zm_\xn}(\emptyset)$. 
\item[(c)] $\zm_\xm \subseteq \zm_{\xn}$, i.e. the algebra of $\xn$ extends that of $\xm$.  
\end{enumerate}
\end{defn}
\end{small}
\end{quotation}
In a refinement, the enumeration stays the same, the distinguished elementary submodel stays the same, but we may add a few more functions to the algebra if we wish. 
In our case it isn't necessary; we'll just show directly that every presentation has a coloring.  For the rest of the proof, then, assume we have been given some 
fixed presentation $\xm$. 

\begin{quotation}
\begin{small}
\noindent
\begin{defn} \label{d:es3} \emph{(The set of quadruples $\mcr_{\xm}$, \cite{MiSh:1030} Definition 3.9)}
Let $\xm$ be a presentation of a given type $p = p_{\xm}$.  
Then $\mcr = \mcr_{\xm}$ is the set of $\xr = (u, w, q, r)$ such that:
\begin{enumerate}
\item $u \in [\lambda]^{<\sigma}$, $w \in [\lambda]^{<\theta}$ and $w = \clm(w)$.
\item $u \subseteq \clm(u) \subseteq w$. 
\item $q = q(\overline{x}_w)$ is a complete type in the variables $\overline{x}_{w}$ such that:
\begin{enumerate}
\item for any finite $v \subseteq \clm(\emptyset)$, if $M_* \models \psi(\overline{a}^*_v)$ then $\psi(\overline{x}_v) \in q$.
\item for any finite $\{ \alpha_0, \dots, \alpha_n \} \subseteq u$, $\exists x \bigwedge_{i\leq n} \vp_\alpha(x,a^*_\alpha) ~\in q$.
\end{enumerate} 
\item $r = r(x,\overline{x}_w)$ is a {complete type} in the variables $x, \overline{x}_{w}$, 
extending 
\[  q(\overline{x}_w) \cup \{ \vp_\alpha(x,x_\alpha) : \alpha \in u \}. \] 
\item \label{here} \underline{if} $\overline{b}^*_{w}$ realizes $q(\overline{x}_{w})$ in $\mathfrak{C}_T$ and $\alpha < \clm(\emptyset) \implies b^*_\alpha = a^*_\alpha$,
\underline{then} 
\begin{enumerate}
\item $r(x,\overline{b}^*_{w})$ is a type which does not fork over $M_*$ and extends $p\rstr M_*$.
\item if $w^\prime \subseteq w$ is $\mlx$-closed, $\mathfrak{C}_T \rstr \{ b^*_\alpha : \alpha \in w^\prime \} \preceq \mathfrak{C}_T$ and 
$r(x,\overline{b}^*_{w}) \rstr \overline{b}^*_{w^\prime}$ is a complete type over this elementary submodel.
\item if $w^\prime \subseteq w$ is $\mlx$-closed and $\alpha \in w^\prime$ then $tp(b^*_\alpha, \{ b^*_\beta : \beta \in w \cap \alpha\})$ 
dnf over $\{ b^*_\beta : \beta \in w^\prime \cap \alpha \}$. 
\end{enumerate}
\end{enumerate}
\end{defn}
\end{small}
\end{quotation}
Note that $u$ need not be closed. So in our case, $r$ will describe a type in the variables $x, \bar{x}_w$ which agrees with $p \rstr M_*$ on $\{ x_\alpha : \alpha < \omega \}$;  
it will then contain new conditions stating that $x$ connects to additional elements $x_\alpha$ and stating in which ``leaves'' of the tree these $x_\alpha$'s fall. 
[For example, if $\{ \vp_\alpha(x,x_\alpha) : \alpha \in u \} = \{ R(x,x_\alpha) : \alpha \in u \}$, 
a priori $x_\alpha$ need not be in the same leaf as $a^*_\alpha$ for $\alpha \geq \omega$.]  

\begin{quotation}
\begin{small}
\noindent
\begin{defn}[A non-triviality condition, \cite{MiSh:1030} Definition 3.10] \label{d:good-inst}
Suppose we are given $\overline{\xr} = \langle \xr_t = (u_t, w_t, q_t, r_t) : t < t_* < \sigma \rangle$ from $\mcr_{\xm}$. 
Say that $\overline{b}^* = \langle b^*_\alpha : \alpha \in \bigcup_{t} w_{t} \rangle$, with each $b^*_\alpha \in \mathfrak{C}$ $($possibly imaginary$)$, 
is a \emph{good instantiation} of $\overline{\xr}$ when the following conditions hold. 
\begin{enumerate}
\item $\alpha \in \clm(\emptyset) \implies b^*_\alpha = a^*_\alpha$. 
\item for each $t < t_*$, $\overline{b}^*\rstr_{w_{t}}$ realizes $q_t(\overline{x}_{w_{t}})$.
\item for each $t < t^\prime < t_*$, if $v \subseteq w_t \cap w_{t^\prime}$ is finite, then:
\begin{enumerate}
\item for each formula $\psi(\overline{x}_v)$,  
$\psi(\overline{b}^*_v) \in q_t ~\iff~ \psi(\overline{b}^*_v) \in q_{t^\prime}$.
\item for each formula $\psi(x,\overline{x}_v)$,  
$\psi(x,\overline{b}^*_v) \in r_t ~\iff~ \psi(x,\overline{b}^*_v) \in r_{t^\prime}$. 
\end{enumerate}
\item if $\beta \in w_t$ for some $t < t_*$ then
\[ \tp(b^*_\beta, \{ b^*_\gamma : \gamma \in \bigcup_{ s\leq t} w_s ~\mbox{and}~ \gamma < \beta \} ) ~\mbox{dnf over}~
\{ b^*_\gamma : \gamma \in w_t \cap \beta \}. \] 
\item for each $t < t_*$, if $w^\prime \subseteq w$ and $\clm(w^\prime) = w^\prime$ then 
$\mathfrak{C}_T \rstr \{ b^*_\alpha : \alpha \in w^\prime \} \preceq \mathfrak{C}_T$ and 
$r_t(x,\overline{b}^*_{w^\prime})$ is a complete type over this elementary submodel which does not fork over $M_*$ 
$($noting that the domain of $M_*$ is $\{ b^*_\alpha : \alpha \in \clm(\emptyset) \}$ by the first item$)$. 
\end{enumerate}
\end{defn}
\end{small}
\end{quotation}
Definition \ref{d:good-inst} is simply to make the definition of coloring meaningful by ruling out trivial inconsistency, as will be clear from the next definition.

\begin{quotation}
\begin{small}
\noindent
\begin{defn}[Coloring, \cite{MiSh:1030} Definition 3.11] \label{d:es4}
Let $\xm$ be a $(\lambda, \theta, \sigma)$-presentation and $\mcr = \mcr_{\xm}$ be from $\ref{d:es3}$. 
Call $G : \mcr_{\xm} \rightarrow \mu$ \emph{an intrinsic coloring of $\mcr_{\xm}$} if: 
whenever 
\[ \overline{\xr} = \langle \xr_t = (u_t, w_t, q_t, r_t) : t < t_* < \sigma \rangle \]
is a sequence of elements of $\mcr_{\xm}$ and 
$\overline{b}^* = \langle b^*_\alpha : \alpha \in \bigcup_{t<t_*} w_t \rangle$ is a good instantiation of $\overline{\xr}$, 

\emph{if} $G \rstr \{ \xr_t : t < t_* \}$ is constant, 

\emph{then} the set of formulas 
\[ \{ \vp_\alpha(x, b^*_\alpha) \colon ~\alpha \in u_t, ~\vp_\alpha \in r_t, ~ t < t_*  \}  \]
is a consistent partial type which does not fork over $M_*$. 
\end{defn}
\end{small}
\begin{center}
\begin{tiny}END OF QUOTATIONS\end{tiny}
\end{center}
\end{quotation}

\br
\noindent It remains to find a coloring given our fixed $\xm$, and therefore $\mcr$. 

\br

\noindent 
\emph{Case 1: $Q(x) \in p$.}   The surprise in this case is that since $p \rstr M_*$ is determined, we know whether $Q_s(x)$ for all $s \in \mcs_k$ and all $k<\omega$. 
This means the set  $\Lambda$ of possible leaves $\eta$ such that $p \rstr M_* \cup \{ R(x,a) \} \cup \{ P_{\eta \rstr k}(a) : k <\omega \}$ is fixed by $p \rstr M_*$ and inherited 
by any $r$ from some $\xr \in \mcr$.  So \emph{whenever} 
\[ \overline{\xr} = \langle \xr_t = (u_t, w_t, q_t, r_t) : t < t_* < \sigma \rangle \]
is a sequence of elements of $\mcr_{\xm}$ and 
$\overline{b}^* = \langle b^*_\alpha : \alpha \in \bigcup_{t<t_*} w_t \rangle$ is a good instantiation of $\overline{\xr}$, 
it must be the case that for each $t < t_*$ and each $\alpha \in w_t$, $r_t$ determines that the leaf of $b^*_\alpha$ must be $\eta$ for some $\eta \in \Lambda$. 
It follows that 
\[ \{ \vp_\alpha(x, b^*_\alpha) \colon ~\alpha \in u_t, ~\vp_\alpha \in r_t, ~ t < t_*  \}  \]
is a consistent partial type which does not fork over $M_*$. 

\br

\br

\noindent 
\emph{Case 2: $P(x) \in p$.}  Since $p$ is a type, there will already be $\eta_* \in \lim (\mct)$ such that $p \vdash P_{\eta \rstr k}(x)$ for all $k<\omega$. 
This information will be part of $p \rstr M_*$.  So only a single color is needed. 

That is, suppose we are given a sequence $\overline{\xr}$ of elements of $\mcr$, on which $G$ is constant and equal to $\beta$, and 
$\overline{b}^* = \langle b^*_\alpha : \alpha \in \bigcup_{t<t_*} w_t \rangle$ which is a good instantiation of $\overline{\xr}$. 
As we have ruled out trivial inconsistency, by our observation, 
\[ \{ \vp_\alpha(x, b^*_\alpha) \colon ~\alpha \in u_t, ~\vp_\alpha \in r_t, ~ t < t_*) \} \cup \{ P_{\eta_* \rstr k}(x) : k < \omega \}   \]
is a consistent partial type which does not fork over $M_*$, and this suffices. 
\end{proof}

\begin{disc}
It is still interesting to ask what would happen if we had available only finitely much information from $p \rstr M_*$. Would some coloring work, which does not rely on having already 
determined the predicates $Q_S$ or $P_\nu$?  In the remainder of this section we consider this, which will give the key idea for dealing with ultrapowers in the next section.  We handle just the case of $Q(x) \in p$ as an illustration, since both cases are worked out in detail in the next section. 
\end{disc}

For each $a \in P^{\mathfrak{C}_{\tsf}}$, let $\leaf(a)$ denote the ``leaf of $a$,'' i.e. the unique $\eta \in \lim (\mct)$ such that 
$\models P_{\eta \rstr k}(a)$ for all $k<\omega$.  

\begin{defn}
Call $B \subsetneq \lim (\mct)$ a \emph{blocking set} when: for every $A \subseteq \mathfrak{C}_{\tsf}$, 
\[ \mbox{ if $\{ \leaf(a) : a \in A \} \cap B = \emptyset$ then $\{ R(x,a) : a \in A \}$ is a partial type. } \] 
\end{defn}
[We can also give a direct definition, using \ref{c2}: $B$ is a blocking set if $\emptyset \subsetneq B \subsetneq \lim (\mct)$, and for all $\eta \in B$ and $k<\omega$ and $\ell \leq f(k)$,  there is $\eta^\prime \in B$ such that 
$\eta \rstr k ~^\smallfrown \langle \ell \rangle \tlf \eta^\prime$.]  
The number of blocking sets is no more than $2^{2^{\aleph_0}}$.\footnote{But see Comment \ref{continuum}.} 
Let $\bar{B} = \langle B_\alpha : \alpha < 2^{2^{\aleph_0}} \rangle$ 
enumerate them, possibly with repetition. 
Now for each $\xr = (u,w,q,r) \in \mcr$, and each $\alpha \in w$, as $q$ is a type, each $x_\alpha$ is either determined to belong to $P$ or to $Q$. 
If $x_\alpha$ belongs to $P$, then (again since $q$ is a type) there is $\eta_{\alpha} \in \lim (\mct)$ such that $q \vdash P_{\eta_{\alpha} \rstr k}(x_\alpha)$ for 
$k<\omega$.  
Moreover, as $r$ is a type, there is at least one blocking set $B$ such that 
\[ \{ \leaf_\xr(x_\alpha) : \alpha \in w, R(x,x_\alpha) \in q \} \cap B = \emptyset.  \] 
Let $\beta_\xr$ be an index for this $B$ in the enumeration $\bar{B}$, say for definiteness a minimal index. 
Choose the coloring function $G$ so that $G(\xr) = \beta_\xr$ for all $\xr \in \mcr$. 

Let's verify that this works. Suppose we are given a sequence $\overline{\xr}$ of elements of $\mcr$, on which $G$ is constant and equal to $\beta$, and 
$\overline{b}^* = \langle b^*_\alpha : \alpha \in \bigcup_{t<t_*} w_t \rangle$ which is a good instantiation of $\overline{\xr}$. 
Since we have ruled out trivial inconsistency with \ref{d:good-inst}, 
inconsistency cannot come from direct disagreement in the sense that, say, $R(x,b^*_\alpha)$ appears in one instance and $\neg(R(x,b^*_\alpha))$ appears in another.  
It will suffice to show that the type restricted to positive instances of $R$ is consistent. 
By definition of $G$, $\{ \leaf(b^*_\alpha) : \alpha \in w_t, t < t_*, R(x,x_\alpha) \in q_t ~\} \cap B_{\beta} = \emptyset$, 
hence the set of formulas 
\[ \{ \vp_\alpha(x, b^*_\alpha) \colon ~\alpha \in u_t, ~\vp_\alpha \in r_t, ~ t < t_* , \vp_\alpha(x,y) = R(x,y) \}  \]
is a consistent partial type which does not fork over $M_*$, and this suffices.

\br
\begin{comm} \label{continuum}
In fact, as the theory has trivial forking, we may use $\sigma = \aleph_0$, $\theta = \aleph_1$ and various natural changes to the definition 
to accommodate this, such as having the closure of a set be itself; see Observation 3.5 of \cite{MiSh:1030} and the paragraph before it. 
With these modifications, we can use co-finite blocking sets only, hence we can replace $2^{2^{\aleph_0}}$ above by $2^{\aleph_0}$. 
We plan to address this in future work $($but see also the proof of Theorem \ref{theta}$)$. 
\end{comm}

\vspace{5mm}

\section{A saturation result for $\tsf$} \label{s:sat}
\setcounter{equation}{0}

\begin{obs}[see e.g. Jech Theorem I.5.20]  \label{o:values}
To satisfy Definition $\ref{d:suitable}$, we 
may take e.g. $\sigma = \theta = \aleph_1$, $\mu = 2^{{\aleph_0}}$ and $\lambda = \mu^{+\ell}$ for any finite $\ell >0$.  
\end{obs}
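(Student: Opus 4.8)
The plan is to verify directly the three clauses of Definition~\ref{d:suitable} for the tuple $(\lambda,\mu,\theta,\sigma)=(\mu^{+\ell},\,2^{\aleph_0},\,\aleph_1,\,\aleph_1)$, where $\mu=2^{\aleph_0}$ and $1\le\ell<\omega$; two of the three are essentially immediate. Clause (1), $\sigma\le\theta\le\mu<\lambda$: here $\sigma=\theta=\aleph_1$, while $\aleph_1\le 2^{\aleph_0}=\mu$ by Cantor's theorem, and $\mu<\mu^{+}\le\mu^{+\ell}=\lambda$ since $\ell\ge1$. Clause (3), $2^{|\alpha|}<\mu$ for all $\alpha<\theta$: every $\alpha<\aleph_1$ is countable, so $2^{|\alpha|}\le 2^{\aleph_0}=\mu$; equivalently $2^{<\theta}=\mu$.

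The content is in clause (2). That $\theta=\aleph_1$ is regular is standard (it is a successor cardinal). Since $\theta=\aleph_1$, one has $\kappa^{<\theta}=\kappa^{\aleph_0}$ for every cardinal $\kappa$; hence $\mu^{<\theta}=\mu^{\aleph_0}=(2^{\aleph_0})^{\aleph_0}=2^{\aleph_0\cdot\aleph_0}=2^{\aleph_0}=\mu$, giving $\mu=\mu^{<\theta}$. For $\lambda=\lambda^{<\theta}=\lambda^{\aleph_0}$ I would use Hausdorff's recursion formula $(\kappa^{+})^{\aleph_0}=\kappa^{+}\cdot\kappa^{\aleph_0}$ (the cited Jech, Theorem~I.5.20) and apply it $\ell$ times starting from the base case $\mu^{\aleph_0}=\mu$ just established: $(\mu^{+})^{\aleph_0}=\mu^{+}\cdot\mu^{\aleph_0}=\mu^{+}$, then $(\mu^{++})^{\aleph_0}=\mu^{++}\cdot(\mu^{+})^{\aleph_0}=\mu^{++}$, and so on down to $(\mu^{+\ell})^{\aleph_0}=\mu^{+\ell}=\lambda$.

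I do not expect a genuine obstacle; the only point worth flagging is that the computation in clause (2) really uses the finiteness of $\ell$. Each $\mu^{+n}$ for $n\le\ell$ is a successor cardinal, hence has cofinality $>\aleph_0$, which is exactly what makes Hausdorff's formula applicable at each step; if one replaced $\lambda=\mu^{+\ell}$ by $\lambda=\mu^{+\omega}$ then $\mathrm{cf}(\lambda)=\aleph_0$ and König's theorem would force $\lambda^{\aleph_0}>\lambda$, so $\lambda=\lambda^{<\theta}$ would fail. One might also note in passing that these values satisfy $\mu\le\lambda=\lambda^{<\theta}$, so Fact~\ref{fact-iff} applies and the completed Boolean algebra $\ba^{1}_{2^{\lambda},\mu,\aleph_0}$ used in \S\ref{t:ns} and \S\ref{s:sat} indeed exists.
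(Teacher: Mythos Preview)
Your verification is exactly what the paper intends: the citation to Jech Theorem~I.5.20 is Hausdorff's formula $(\kappa^{+})^{\aleph_0}=\kappa^{+}\cdot\kappa^{\aleph_0}$, and the paper offers no proof beyond that reference. Your inductive application of it to get $\lambda^{\aleph_0}=\lambda$, together with the routine checks of clauses~(1) and~(2), is the whole content.

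One discrepancy you pass over too quickly: for clause~(3) you establish only $2^{|\alpha|}\le\mu$ for $\alpha<\aleph_1$, whereas Definition~\ref{d:suitable}(3) as written in this paper demands the \emph{strict} inequality $2^{|\alpha|}<\mu$. Taking $\alpha=\omega$ gives $2^{\aleph_0}=\mu$, so the strict form literally fails for these values. This is evidently a slip in the paper's transcription of the definition from \cite{MiSh:1030} rather than an error in the Observation---the same values are used throughout (e.g.\ in Theorem~\ref{theta}) as if suitable, and no argument in the paper needs more than $2^{<\theta}\le\mu$. Still, you should flag the mismatch explicitly rather than write ``$\le$'' where ``$<$'' is required and move on.
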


Perfect ultrafilters were defined and shown to exist in \cite{MiSh:1030} \S 9, for the case of suitable $(\lambda, \mu, \aleph_0, \aleph_0)$.  
[These were called $(\lambda, \mu)$-perfect, with $\theta$, $\sigma$ omitted when countable.] 
Here we make the essentially cosmetic changes to extend this definition to allow for possibly uncountable $\theta$, starting with the definition. 

\begin{defn}[Support of a sequence, \cite{MiSh:1030} Definition 5.6.1] \label{d:support}
Let $\overline{\mb} = \langle \mb_u : u \in [\lambda]^{<\aleph_0} \rangle$ be a sequence of elements of $\ba = \ba^1_{2^\lambda, \mu, \theta}$. 
We say $X$ is a support of $\overline{\mb}$ in $\ba$ when $X \subseteq \{ \mx_f : f \in \fin_{\mu, \theta}(\alpha) \}$ and 
for each $u \in [\lambda]^{<\aleph_0}$ there is a maximal antichain of $\ba$
consisting of elements of $X$ all of which are either $\leq \mb_u$ or $\leq 1 -\mb_u$. 
Though there is no canonical choice of support we will write $\supp(\bar{\mb})$ to mean \emph{some} support. 
\end{defn}

\noindent Defintion \ref{d:perfect-0} extends \cite{MiSh:1030}, Definition 9.1 to possibly uncountable $\theta$. 

\begin{defn}[Perfect ultrafilters, for suitable $(\lambda, \mu, \theta, \aleph_0)$] 
\label{d:perfect-0} 
Let $(\lambda, \mu, \theta, \aleph_0)$ be suitable. 
We say that an ultrafilter $\de_*$ on $\ba = \ba^1_{2^\lambda, \mu, \theta}$ 
is \emph{$(\lambda, \mu, \theta, \aleph_0)$-perfect} when $(A)$ implies $(B)$:
\begin{enumerate}
\item[(A)] $\langle \mb_u : u \in [\lambda]^{<\aleph_0} \rangle$ is a monotonic sequence of elements of $\de_*$ 
\\ and  
$\supp(\bar{\mb})$ is a support for $\bar{\mb}$ of cardinality $\leq \lambda$, see $\ref{d:support}$, such that \\ for every $\alpha < 2^\lambda$ with 
$\bigcup \{ \dom(f) : \mx_f \in \supp(\overline{\mb}) \} \subseteq \alpha$, 
\\ there exists a multiplicative sequence 
\[ \langle \mb^\prime_u : u \in [\lambda]^{<\aleph_0} \rangle \]
of elements of $\ba^+$ 
such that
\begin{itemize} 
\item[(a)] $\mb^\prime_{u} \leq \mb_{u}$ for all $u \in [\lambda]^{<\aleph_0}$,  
\item[(b)] for every $\mc \in \ba^+_{\alpha, \mu, \theta} \cap \de_*$, 
no intersection of finitely many members of  
$\{ \mb^\prime_{\{i\}} \cup (1-\mb_{\{i\}}) : i < \lambda \}$ %\]
is disjoint to $\mc$. 
\end{itemize}
\item[(B)] there is a multiplicative sequence $\bar{\mb}^\prime = \langle \mb^\prime_u : u \in [\lambda]^{<\aleph_0} \rangle$ 
of elements of $\de_*$ which refines $\bar{\mb}$. 
\end{enumerate}
\end{defn}

\noindent Definition \ref{d:perfected} extends \cite{MiSh:1050}, Definition 3.11 to possibly uncountable $\theta$. 

\begin{defn} \label{d:perfected} 
Suppose $(\lambda, \mu, \theta, \aleph_0)$ are suitable. 
If $\de$ is built from from $(\de_0, \ba, \de_*)$ where $\de_0$ is a regular filter on $I$, $|I| = \lambda$, 
$\ba = \ba^1_{2^\lambda, \mu, \theta}$ and $\de_*$ is $(\lambda, \mu, \theta, \aleph_0)$-perfect, say $\de$ 
is \emph{$(\lambda, \mu, \theta, \aleph_0)$-perfected.}
\end{defn}

\noindent In the Appendix below, we update \cite{MiSh:1030} Theorem 9.4 to allow for the possibility of 
uncountable $\theta$: 

\begin{thm-e}[Existence, Theorem \ref{t:perfect-exists} below] 
Let $(\lambda, \mu, \theta, \aleph_0)$ be suitable. 
Let $\ba = \ba^1_{2^\lambda, \mu, \theta}$. Then there exists a $(\lambda, \mu, \theta, \aleph_0)$-perfect ultrafilter on $\ba$.
\end{thm-e}

The main result of this section is that perfect ultrafilters are able to saturate $\tsf$ for an uncountable but constant value of $\mu$. 

\begin{theorem} \label{theta}
Let $(\lambda, \mu, \theta, \aleph_0)$ be suitable, and in addition suppose $\mu \geq 2^{\aleph_0}$ and $\theta \geq \aleph_1$ 
$($e.g. let $\theta = \aleph_1$, $\mu = 2^{\aleph_0}$, and $\lambda = \mu^{+n}$ for any finite $n$$)$. 
Let $\de$ be a $(\lambda, \mu, \theta, \aleph_0)$-perfected ultrafilter on $I$, $|I| = \lambda$. 
Then $\de$ is good for $\tsf$, i.e. for any $M \models \tsf$, the ultrapower $M^I/\de$ is $\lambda^+$-saturated. 
\end{theorem}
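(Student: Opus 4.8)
The plan is to use separation of variables (Theorem~\ref{t:separation}) in the contrapositive-friendly direction: since $\de$ is $(\lambda,\mu,\theta,\aleph_0)$-perfected, it is built from $(\de_0,\ba,\de_*)$ with $\ba = \ba^1_{2^\lambda,\mu,\theta}$ and $\de_*$ perfect, so to show $\de$ is good for $\tsf$ it suffices to show $\de_*$ is $(\lambda,\ba,\tsf)$-moral, i.e.\ that every possibility pattern coming from a type has a multiplicative refinement in $\de_*$. By Claim~\ref{up-types} we only need to handle $1$-types of the two shapes $\{Q(x)\}\cup\{R(x,a):a\in A\}$ and $\{P(x)\}\cup\{R(b,x):b\in B\}$ with $|A|,|B|\le\lambda$, since $\de$ is automatically good for $\trg$ (any perfected ultrafilter is, as $\mu\ge 2^{\aleph_0}$ gives $\mu(\de)\ge\lambda^+$; alternatively this is folklore about perfect ultrafilters). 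So fix such a type $p$ and let $\bar\mb = \langle \mb_u : u\in[\lambda]^{<\aleph_0}\rangle$ be the associated possibility pattern, $\mb_u = \jj(\{t : M\models \exists x\bigwedge_{\alpha\in u}\vp_\alpha(x,\bar a_\alpha[t])\})$.

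The heart of the argument is to build, \emph{outside} of $\de_*$ but inside $\ba^+$, a multiplicative sequence $\langle \mb'_u\rangle$ satisfying clauses (a) and (b) of Definition~\ref{d:perfect-0}(A); perfectness then hands us a multiplicative refinement inside $\de_*$, which is exactly morality for this pattern. To construct $\langle\mb'_u\rangle$ I would take a support $\supp(\bar\mb)$ of size $\le\lambda$, let $\alpha^* < 2^\lambda$ bound the union of the domains of its generators, and work with generators $\mx_f$ with $\dom(f)\subseteq\alpha^*$. The key idea — anticipated by the discussion after Theorem~\ref{tnes} and the blocking-set machinery — is that in each index model $M$, an element $b$ (resp.\ $a$) realizing the type must have its ``leaf'' (the branch $\eta\in\lim(\mct)$, resp.\ its $\lim(\mcs)$-branch) lying outside a blocking set determined by the type; for a \emph{single} type the relevant branch is already pinned down by the $P_\eta$'s or $Q_s$'s occurring, so there is a \emph{fixed} blocking set $B\subsetneq\lim(\mct)$ such that all the parameters avoid $B$. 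Concretely: for each $\alpha<\lambda$, choose $\mx_{g_\alpha}\le\mb_{\{\alpha\}}$ inside $\de_*$ deciding the leaf of the $\alpha$-th parameter; since the type forces leaves into $\lim(\mct)\setminus B$ for a blocking set $B$, and a blocking set by definition certifies that $\{R(x,a):a\in A'\}$ is consistent whenever the leaves of $A'$ avoid $B$, we get that \emph{every finite} conjunction is consistent in the index models that matter, so we may set $\mb'_u = \bigcap_{\alpha\in u}\mx_{g_\alpha}$ (intersected appropriately with $\mb_u$ to get (a)), which is automatically multiplicative. Verifying clause (b) — that no finite intersection of members of $\{\mb'_{\{i\}}\cup(1-\mb_{\{i\}})\}$ is disjoint from a given $\mc\in\ba^+_{\alpha^*,\mu,\theta}\cap\de_*$ — is where the no-full-splitting constraint of $\tsf$ must be respected: one uses that $\mc$ is (below) some $\mx_h$ with $\dom(h)\subseteq\alpha^*$, and since the $\mx_{g_\alpha}$ for $\alpha\ge$ some large ordinal involve fresh coordinates $\omega\alpha+k$ disjoint from $\dom(h)$ and from each other, compatibility of finitely many of them with $\mx_h$ reduces to a finite combinatorial check inside $\fin_{\mu,\theta}$, exactly as in Claim~\ref{intersections}; the leaf choices being drawn from a single blocking set is what guarantees no $f(k)+1$ of them conspire to fill a successor partition.

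The main obstacle I anticipate is clause (b) of perfectness together with the interaction between the ``fresh coordinates'' encoding of leaves and an arbitrary $\mc\in\de_*$: one must choose the $g_\alpha$ (hence the $\mb'$) \emph{before} seeing all of $\de_*$, so the blocking-set argument has to be uniform in $\mc$. This is handled by the order of quantifiers in Definition~\ref{d:perfect-0}(A): the support and the bound $\alpha^*$ are fixed first, and (b) only needs to hold for $\mc$ with $\mc\in\ba^+_{\alpha^*,\mu,\theta}$, i.e.\ $\mc$ using only the ``early'' coordinates, so the late coordinates $\omega\alpha+k$ (which I would arrange to all lie $\ge\alpha^*$, reindexing if necessary) are genuinely free relative to $\mc$ and the check localizes. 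The $P(x)\in p$ case is symmetric but cleaner: there the branch $\eta_*\in\lim(\mct)$ with $p\vdash P_{\eta_*\rstr k}(x)$ is literally part of $p$, so a \emph{single} leaf works and (as in Case 2 of Theorem~\ref{tnes}) the multiplicative refinement is essentially trivial; dually for $Q(x)\in p$ one uses that $\{Q_s : s\}$ membership, hence the $\lim(\mcs)$-branch of $x$, is determined, and the consistency of $\{R(x,a):a\in A\}$ for the parameters follows from the second bullet of Definition~\ref{c2}(2)(e) once we know all $a$'s land in leaves compatible with that branch. Finally one assembles: morality for all patterns of the two shapes in Claim~\ref{up-types}, plus goodness for $\trg$, plus Theorem~\ref{t:separation} and Claim~\ref{up-types}, yields $\lambda^+$-saturation of $M^I/\de$.
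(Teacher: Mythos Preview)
Your overall architecture is right (separation of variables, Claim~\ref{up-types}, verify clause~(A) of Definition~\ref{d:perfect-0}), but the core construction has a genuine gap. You propose to ``choose $\mx_{g_\alpha}\le\mb_{\{\alpha\}}$ inside $\de_*$ deciding the leaf of the $\alpha$-th parameter.'' This is not generally possible: the leaf of $a_\alpha$ is determined by the countable intersection $\bigcap_{k<\omega}\ma_{P_{\eta\rstr k}(a_\alpha)}$, and while each finite approximation lies in $\de_*$, the ultrafilter $\de_*$ is not countably complete, so no single piece of the leaf partition need lie in $\de_*$. Consequently you cannot pin down a single blocking set $B$ working uniformly in $\de_*$, and your candidate $\mb'_{\{\alpha\}}=\mx_{g_\alpha}$ simply may not exist as described. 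Your fallback reasoning for clause~(b), invoking ``fresh coordinates $\omega\alpha+k$,'' imports the specific generator layout from the \emph{non}-saturation construction of Lemma~\ref{c8}; here the possibility pattern is handed to you by an arbitrary type and you have no control over which coordinates its support uses.

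What the paper actually does is the missing idea: it first builds a support in which, for each $i$, a maximal antichain $\langle\mx_{f_{i,\zeta}}:\zeta<\mu\rangle$ decides the full leaf of $a_i$ (this is where $\theta\ge\aleph_1$ is used), then takes one genuinely fresh antichain $\langle\mc_\epsilon:\epsilon<\mu\rangle$ at a coordinate beyond the support, indexed by the $\le 2^{\aleph_0}\le\mu$ cofinite blocking sets (Case~1) or leaves (Case~2), and sets $\mb'_{\{i\}}$ to be the \emph{union} of those $\mx_{f_{i,\zeta}}\cap\mc_\epsilon$ for which the decided leaf is compatible with blocking set $\epsilon$. This makes $\mb'_{\{i\}}$ large rather than a single generator; clause~(a) holds because on each $\mc_\epsilon$ the surviving leaves all avoid one blocking set, and clause~(b) holds because any $\mc\in\ba^+_{\alpha,\mu,\theta}\cap\de_*$ can be shrunk to decide the finitely many relevant leaves, after which \emph{some} $\epsilon$ works and $\mc\cap\mc_\epsilon>0$ by freshness. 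Your sketch of Case~2 as ``essentially trivial'' also conflates the explicit-simplicity argument of Theorem~\ref{tnes} (where $p\rstr M_*$ fixes the branch) with the ultrapower setting, where the same fresh-antichain mechanism is needed.
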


\begin{proof}
We begin with the usual setup.  We fix $\de_0$, $\ba = \ba^1_{2^\lambda, \mu, \theta}$, $\jj$ and a $(\lambda, \mu, \theta, \aleph_0)$-perfect ultrafilter $\de_*$ on $\ba$ 
such that $\de$ is built from $(\de_0, \ba, \de_*)$ via $\jj$.
We choose $M \models \tsf$ as the index model, without loss of generality $\lambda^+$-saturated (by regularity of $\de$ the choice of $M$ will not matter). 
We fix some lifting from $M^I/\de$ to $M^I$, so that for each $a \in M^I/\de$ and each index $t \in I$ the projection $a[t]$ is well defined.  
If $\bar{c}  = \langle c_i : i < m \rangle \in {^m(M^I/\de)}$ then we use $\bar{c}[t]$ to denote $\langle c_i[t] : i < m \rangle$. 

Following Claim \ref{up-types},  it suffices to consider partial types of the following form.  (Moreover, since $R$ is not symmetric, $Q(x)$ and $P(x)$ are 
implied by the rest of the partial type in each case, so we may omit them.)

\begin{enumerate}
\item[(1)] $\{Q(x) \} \cup  \{ R(x,a) : a \in A \}$ for $A \subseteq P^N$, $|A| \leq \lambda$. 

\item[(2)] $\{ P(x) \} \cup \{ R(b,x) : b \in B \}$ for $B \subseteq Q^N$, $|B| \leq \lambda$. 
\end{enumerate}
Fix a partial type $p = p(x)$ which is either of type (1) or type (2).  Depending on which there will 
be some minor choices to make in the proof below. 
Recall two useful facts from the proof of Claim \ref{c5} above: in models of $\tsf$, for each finite $n$, 
\begin{quotation}
\begin{itemize}
\item[(Fact A)] For elements $a_0, \dots, a_n \in N$, 
$N \models (\exists x)\bigwedge_{i\leq n}R(x,a_i)$ if and only if there exist $\eta_0, \dots, \eta_n \in \mct_{k_*}$ such that 
\\ $N \models (\exists x)\bigwedge_{i\leq n}(~R(x,a_i) \land P_{\eta_i}(a_i)~)$, \\ where $k_*$ is minimal such that $f(k_*) > n$. 
\item[(Fact B)] For elements $b_0, \dots, b_n \in N$, $N \models (\exists x)\bigwedge_{i\leq n}R(b_i, x)$ 
if and only if there exists $\eta \in \mct_{k_*}$ such that \\ $N \models (\exists x)\bigwedge_{i\leq n}(~R(b_i, x) \land P_{\eta}(x)~)$, 
\\ where $k_*$ is minimal such that $f(k_*) > n$. 
\end{itemize}
\end{quotation}

\br
\br
\noindent We'll follow the strategy of \cite{MiSh:1050}, Theorem 4.1.   

\br
\noindent \underline{We begin with the case where $p$ is of type (1).}  

\br
\noindent Without loss of generality (possibly $|A| < \lambda$,  but $||N|| \geq 2^\lambda$ so this is no problem), 
\begin{equation}
\mbox{ let $\langle a_i : i <\lambda \rangle$ list the elements of $A$ without repetition. }
\end{equation}
This induces an enumeration of $p$ as 
\begin{equation} \label{e1}
\langle R(x, a_i) : i < \lambda \rangle. 
\end{equation}
As usual, for each finite $u \subseteq \lambda$, let
\begin{equation} \label{e2}
B_u := \{ t \in I : M \models \exists x \bigwedge_{i \in u} R(x,a_i[t]) \} ~~\mbox{ and } \mb_u = \jj(B_u). 
\end{equation}
and let 
\begin{equation} 
\bar{\mb} = \langle \mb_u : u \in [\lambda]^{<\aleph_0} \rangle.
\end{equation}
First we build an appropriate support for $\bar{\mb}$.  This will require handling equality and leaves. For equality, for each $i,j \in \lambda$ let 
\begin{equation}
\label{c-equal}
A_{a_i=a_j} := \{ t \in I : a_i[t] = a_j[t] \} \mbox{ and let } \ma_{a_i = a_j} := \jj(A_{a_i=a_j}). 
\end{equation}
For leaves, for each $i \in \lambda$, and each $\eta \in \mct$, let 
\begin{equation} \label{e3}
\ma_{P_\eta(a_i)} = \jj(~ \{ t \in I :  M \models P_\eta(a_i[t]) \} ~).
\end{equation}
Remembering that $\theta > \aleph_0$, for each $\eta \in \lim (\mct)$, define 
\begin{equation} \label{a-leaf}
\ma_{\mbox{``}\leaf(a_i) = \eta\mbox{''}} := \bigcap_{k < \omega} \ma_{P_{\eta \rstr k}(a_i)}. 
\end{equation}
Then (\ref{a-leaf}) will be nonzero for some, but not necessarily all, $\eta$, however, for each $i$, 
\begin{equation} \label{e:ip}
\langle \ma_{\mbox{``}\leaf(a_i) = \eta\mbox{''}} : \eta \in \lim (\mct) \rangle
\end{equation}
is a maximal antichain of $\ba$. 

For each $i < \lambda$ let $\eff_{\{i\}}$ be the set of all $f \in \fin_{\mu, \theta}(2^\lambda)$ such that for some $j \leq i$, the three condtions (\ref{c:a1-1}), (\ref{c:a2-1}), (\ref{c:a3-1}) hold: 
\begin{equation} \label{c:a1-1} 
\mx_f \leq  \ma_{a_i=a_j}.
\end{equation} 
\begin{equation}
\label{c:a2-1} \mbox{for all $k<j$, } ~\mx_f \cap \ma_{a_i=a_k} ~ = 0.
\end{equation} 
\begin{equation}
\label{c:a3-1} \mbox{for some $\eta$, } ~\mx_f \leq \ma_{\mbox{``} \leaf(a_i) = \eta\mbox{''} }. 
\end{equation} 
For each finite $u \subseteq \lambda$, define $\eff_u$ to be $\bigcap \{ \eff_{\{i\}} : i \in u \}$. 
Each $\eff_u$ is upward closed, i.e. $f \in \eff_u$ and $g \in \fin_{\mu,\theta}(2^\lambda)$ and $g \supseteq f$ implies $g \in \eff_u$. 

For each $u \in [\lambda]^{<\aleph_0}$, by induction on $\zeta < \lambda$, choose a maximal antichain $\langle \mx_{f_\epsilon} : \epsilon < \epsilon_* \rangle$ of elements of $\ba$ such that 
(i) each $\mx_{f_\epsilon}$ is either $\leq \mb_u$ or $\leq 1 - \mb_u$, and (ii) each $f_\epsilon \in \eff_u$ and $0 \in \dom(f_\epsilon)$.  
Necessarily the construction will stop at an ordinal $< \mu^+$, but $\geq \mu$ because $0 \in \dom(f_\epsilon)$. Re-index this 
antichain as 
\begin{equation}
\langle \mx_{f_{u,\zeta}} : \zeta < \mu \rangle.
\end{equation}
Then 
\begin{equation} \label{the-support}
 \{ ~ \mx_{f_{u,\zeta}} : \zeta < \mu,  u \in [\lambda]^{<\aleph_0} ~\}
\end{equation}
is a support of $\bar{\mb}$ in the sense of Definition \ref{d:support}.  When $u = \{ i \}$, we will often write 
\[ f_{i, \zeta} \mbox{ to mean } f_{\{i\},\zeta}. \]

\br

Second, we build a multiplicative refinement for $\bar{\mb}$. 
\begin{equation}
\mbox{ Fix $\alpha < 2^\lambda$ such that $\bigcup \{ \dom(f_{u,\zeta}) : \zeta < \mu, u \in [\lambda]^{<\aleph_0} \} \subseteq \alpha$ }. 
\end{equation}
As before,  let $\leaf(a)$ denote the unique $\eta \in \lim (\mct)$ such that $\models P_{\eta \rstr k}(a)$ for all $k<\omega$, and 
call $X \subsetneq \lim (\mct)$ a \emph{blocking set} when: for every $A \subseteq P^{\mathfrak{C}_{\tsf}}$, 
\[ \mbox{ if $\{ \leaf(a) : a \in A \} \cap X = \emptyset$ then $\{ R(x,a) : a \in A \}$ is a partial type. } \] 
As $\mu \geq  2^{{\aleph_0}}$, let 
\begin{equation}
\langle X_\epsilon : \epsilon < \mu \rangle
\end{equation}
be an enumeration, possibly with repetitions, of all co-finite blocking sets.\footnote{A priori we could use all blocking sets and $\mu = 2^{2^{\aleph_0}}$, but the nice point is that in our present setup the co-finite blocking sets suffice. Note that here $p(x)$ being a set of \emph{positive} instances of $R(x,y)$ helps. With negation, we'd 
need $f_{u,\zeta}$'s deciding all cases of $R(x,a_i)$, $\neg R(x,a_i)$, $a_i =a_j$.} 
Let $H$ be the function from $\{ f_{i,\zeta} : i < \lambda, \zeta < \mu \} \times \mu$ to $ \{ 0, 1 \}$ given by 
\begin{equation}
 H (f_{i, \zeta}, \epsilon) = 1 \mbox{ iff } \eta \notin X_\epsilon   
\end{equation}
where $\eta$ is the unique element of $\lim (\mct)$ such that $\mx_{f_{i,\zeta}} \leq \ma_{\mbox{``}\leaf(a_i) = \eta\mbox{''}}$. 
(Very informally, $H$ returns 1 if a type avoiding the blocking set $B_\epsilon$ may contain $a_i$ as it appears on $\mx_{f_{i,\zeta}}$.) 

We'll need a new antichain to help us divide the work: 
\begin{equation}
\mbox{let $\bar{\mc} = \langle \mc_\epsilon : \epsilon < \mu \rangle$ be given by $\mc_{\epsilon} = \mx_{\{ (\alpha, \epsilon) \}} \}$.}
\end{equation}
Any element of this antichain will have nonzero intersection with any of the elements from $\ba^+_{\alpha, \mu, \theta}$, our protagonists so far.  

Finally, for each $i < \lambda$, define
\begin{equation} \label{c:one-1} 
\mb^\prime_{\{i \}} = \left ( \bigcup \{   \mx_{f_{i,\zeta}} \cap \mc_{ \epsilon   }           : \zeta < \mu  \mbox { and } H(f_{i,\zeta}, \epsilon) = 1   \}   \right) \cap \mb_{\{i\}}. 
\end{equation}
Define 
\begin{equation}
\mb^\prime_u = \bigcap_{i \in u} \mb^\prime_{\{i\}}, \mbox{ and let } \bar{\mb}^\prime = \langle \mb^\prime_u : u \in [\lambda]^{<\aleph_0} \rangle. 
\end{equation}
By definition, $\bar{\mb}^\prime$ is multiplicative. 
Our final task is to make sure the hypotheses of Definition \ref{d:perfect-0} are satisfied, i.e. that for our multiplicative sequence 
$\bar{\mb}^\prime$, 
\begin{itemize} 
\item[(a)] $\mb^\prime_{u} \leq \mb_{u}$ for all $u \in [\lambda]^{<\aleph_0}$,  
\item[(b)] for every $\mc \in \ba^+_{\alpha, \mu, \theta} \cap \de_*$, 
no intersection of finitely many members of  
\\ $\{ \mb^\prime_{\{i\}} \cup (1-\mb_{\{i\}}) : i < \lambda \} $
is disjoint to $\mc$. 
\end{itemize}
For (a), suppose for a contradiction that for some $u \in [\lambda]^{<\aleph_0}$ there were a nonzero 
\[ \mc \leq  \mb^\prime_u \setminus \mb_u . \]
Without loss of generality,  
\begin{equation}
\label{ce-1}
\mc \leq \mc_\epsilon \mbox{ for some } \epsilon < \mu,  
\end{equation}
and also, since $u$ is finite, 
\begin{equation}
\label{wlog}
\mbox{ $\mc$ is either below or disjoint to all elements in 
$\{ \mx_{f_{i,\zeta}} : i \in u, \zeta < \mu \}$}.
\end{equation}  
So for each $i\in u$ there is $\zeta_i < \mu$  with $\mc \leq \mx_{f_{i,\zeta_i}}$. By (\ref{ce-1}) and the definition (\ref{c:one-1}), 
\begin{equation}
\mc \leq \bigcap_{i \in u} \mx_{f_{i,\zeta_i}} \mbox{ and }  \bigwedge_{i \in u}  H(f_{i,\zeta_i}, \epsilon) = 1. 
\end{equation}
Now by our hypothesis, $\mc \leq \mb_{\{i\}}$, meaning
\begin{equation}
\mc \leq \ma[\exists x R(x,a_i)] ~~ \mbox{ for $i \in u$ }. 
\end{equation}
Let $k_*$ be minimal so that $f(k_*) > |u|$. Then as $H(f_{i,\zeta_i}, \epsilon) = 1$ for $i \in u$, 
\begin{equation}
\mc \leq \bigcap \{ \ma[\neg P_{\rho \rstr k}(a_i)]  : i \in u, \rho \in X_\epsilon, k \leq k_* \}. 
\end{equation}
Informally, on $\mc$ none of the parameters $a_i$ fall into the predicates forbidden by the blocking set, at least up to level $k_*$ (this suffices for our contradiction, recalling Fact A from the beginning of the proof). 
And $\mc \cap \mb_u = \emptyset$ means 
\begin{equation}
\mc \leq \ma[\neg \exists x \bigwedge_{i \in u} R(x,a_i)]. 
\end{equation}
Since $\bar{\mb}$ is a possibility pattern and $\mc > 0$, this means we should be able to find values for $a_i$ in $\mathfrak{C}_{\tsf}$ which would make this combination of 
formulas true, but this is impossible because $X_\epsilon$ is a blocking set (so avoiding it gives a type).  This contradiction shows that (a) holds, so $\bar{\mb}^\prime$ is a multiplicative 
refinement of $\bar{\mb}$. 

\br
For (b), it will suffice to show that for any $\ma \in \de_*$ such that $\supp(\ma) \subseteq \alpha$, and any finite $u \subseteq \lambda$,
\[ \ma \cap \bigcap \{ \mb^\prime_{\{i\}} \cup (1 - \mb_{\{i\}}) : i \in u \} > 0. \]
Without loss of generality, we can write $u = v \cup w$ where for each $i \in v$, $\ma \leq 1 - \mb_{\{i\}}$ and for each 
$i \in w$, $\ma \leq \mb_{\{i\}}$. If $w$ is empty we are done, so suppose not, and it will suffice to show that 
\[ \ma \cap \bigcap \{ \mb^\prime_{\{i\}} : i \in w \} > 0. \]
As $\mb_w \in \de_*$, without loss of generality $\ma \leq \mb_w$, and we may choose 
$g \in \fin_{\mu,\theta}(\lambda)$ such that $\mx_f \leq \ma$.  Moreover, for each $i \in w$, we may assume that there is some $\zeta_i < \mu$ such that 
$\mx_g \leq \mx_{f_{i,\zeta_i}}$. So we have that
\[ 0 < \mx_g \leq  \bigcap_{i \in w} \mx_{f_{i,\zeta_i}}  \leq \mb_w.  \]
Recall that by our choice of partitions, for each $f_{i,\zeta_i}$, there is a unique $\eta = \eta_i \in \lim (\mct)$ such that $\mx_{f_{i,\zeta}} \leq \ma_{\mbox{``}\leaf(a_i) = \eta\mbox{''}}$. 
Because this intersection is $\leq \mb_w$, there is some blocking set $X_\epsilon$ such that $X_\epsilon \cap \{ \eta_i : i \in w \} =\emptyset$. Then 
\[ 0 < \mc_\epsilon \cap \bigcap_{i \in w} \mx_{f_{i,\zeta_i}} \cap \mx_g \leq \bigcap \{ \mb^\prime_{\{i\}} : i \in w \} \]
which completes the proof of (b). 
This completes the proof of Case 1. 

\br

\noindent \underline{For case (2)}, the strategy is similar, with a few changes to reflect the dual type. For clarity we give the entire argument, renaming the parameter set as $B$.  

\setcounter{section}{6}
\setcounter{equation}{0}

\br \noindent Without loss of generality, 
\begin{equation}
\mbox{ let $\langle b_i : i <\lambda \rangle$ list the elements of $B$ without repetition. }
\end{equation}
This induces an enumeration of $p$ as 
\begin{equation} \label{e1}
\langle R(b_i, x) : i < \lambda \rangle. 
\end{equation}
For each finite $u \subseteq \lambda$, let
\begin{equation} \label{e2}
B_u := \{ t \in I : M \models \exists x \bigwedge_{i \in u} R(b_i[t], x) \} ~~ \mbox{ and } \mb_u = \jj(B_u). 
\end{equation}
and let 
\begin{equation} 
\bar{\mb} = \langle \mb_u : u \in [\lambda]^{<\aleph_0} \rangle.
\end{equation}
First we build an appropriate support for $\bar{\mb}$.  As before, for each $i,j \in \lambda$ let 
\begin{equation}
\label{c-equal}
A_{b_i=b_j} := \{ t \in I : b_i[t] = b_j[t] \} \mbox{ and let } \ma_{b_i = b_j} := \jj(A_{b_i=b_j}). 
\end{equation}
Now for each $\eta \in \mct$, 
\begin{equation} \label{e3} 
\ma_{(\exists x)(R(b_i,x) \land P_\eta(x))} = \jj(~ \{ t \in I : M \models \exists x (R(b_i[t], x) \land P_\eta(x)) \} ~).  
\end{equation}
As $\theta > \aleph_0$, for each $i < \lambda$ and $\eta \in \lim (\mct)$, define 
\begin{equation} \label{a-set}
\ma_{\mbox{``}(\exists x)(R(b_i,x) \land \leaf(x) = \eta)\mbox{''}} := \bigcap_{k < \omega} \ma_{(\exists x)(R(b_i,x) \land P_{\eta\rstr k}(x))}. 
\end{equation}
For each $s \in \mcs$, 
\[ \ma_{Q_s(b_i)} = \jj ( ~ \{ t \in I : M \models Q_s(b_i[t]) \} ~). \]
For each $\nu \in \lim(\mcs)$, letting $\nu = \langle s_k : k < \omega \rangle$ (recalling \ref{n:ds}) so $\nu(k)$ denotes $s_k$, define 
\[ \ma_{\mbox{``}Q_{\nu}(b_i)\mbox{''}} := \bigcap_{k<\omega} \ma_{Q_{\nu(k)}(b_i)} \]
Then for each $i < \lambda$, 
\begin{equation}
\label{e:np}
\langle \ma_{\mbox{``}Q_{\nu}(b_i)\mbox{''}} : \nu \in \lim (\mcs) \rangle 
\end{equation} 
is a partition of $\mb_{\{i \}}$. 
For each finite $u \subseteq \lambda$ let $\eff_{u}$ be the set of all $f \in \fin_{\mu, \theta}(2^\lambda)$ such that the conditions (\ref{c:a1}), (\ref{c:a3}) hold: 
\begin{equation} \label{c:a1} 
\mbox{ for $i \in u$, for some $j \leq i$, }
\mx_f \leq  \ma_{b_i=b_j} \mbox{ and } \mbox{for all $k<j$, } ~\mx_f \cap \ma_{b_i=b_k} ~ = 0.
\end{equation} 
\begin{equation}
\label{c:a3} \mbox{ for $i \in u$, for some $\nu \in \lim (\mcs)$, }
\mx_f  \leq \ma_{  \mbox{``}Q_{\nu}( b_i )   \mbox{''}    }. 
\end{equation}
It follows that for any $f \in \eff_{\{i\}}$, if $\nu \in \lim (\mcs)$ is such that $\mx_f  \leq \ma_{\mbox{``}Q_{\nu}(b_{\{i\}})\mbox{''}}$, then 
for some $\eta \in \lim (\mct)$, indeed for any $\eta$ such that $\eta \rstr k \in \nu(k)$ for all $k<\omega$, 
\begin{equation} \label{e:leaf} ~\mx_f \leq %\bigcap_{i \in u} 
\ma_{\mbox{``}(\exists x)(R(b_i,x) \land \leaf(x) = \eta)\mbox{''}}. 
\end{equation}
Each $\eff_u$ is upward closed, i.e. $f \in \eff_u$ and $g \supseteq f$ implies $g \in \eff_u$. 

For each $u \in [\lambda]^{<\aleph_0}$, by induction on $\zeta < \lambda$, choose a maximal antichain $\langle \mx_{f_\epsilon} : \epsilon < \epsilon_* \rangle$ of elements of $\ba$ such that 
(i) each $\mx_{f_\epsilon}$ is either $\leq \mb_u$ or $\leq 1 - \mb_u$, and (ii) each $f_\epsilon \in \eff_u$.  Necessarily the construction will stop at an ordinal $< \mu^+$. 
Renumber this 
antichain as 
\[ \langle \mx_{f_{u,\zeta}} : \zeta < \mu \rangle. \]
Then 
\begin{equation} \label{the-support}
 \{ ~ \mx_{f_{u,\zeta}} : \zeta < \mu,  u \in [\lambda]^{<\aleph_0} ~\}
\end{equation}
is a support of $\bar{\mb}$ in the sense of Definition \ref{d:support}. %, and also satisfies (\ref{e:coh}). 

When $u = \{ i \}$, we will again write 
\[ f_{i, \zeta} \mbox{ to mean } f_{\{i\},\zeta}. \]

\br

Second, we build a multiplicative refinement for $\bar{\mb}$. 
\[ \mbox{ Fix $\alpha < 2^\lambda$ such that $\bigcup \{ \dom(f_{u,\zeta}) : \zeta < \mu, u \in [\lambda]^{<\aleph_0} \} \subseteq \alpha$ }. \] 
As $\mu \geq  2^{{\aleph_0}}$, let 
\begin{equation} \label{enum-eta}
\langle \eta_\epsilon : \epsilon < \mu \rangle
\end{equation}
be an enumeration, possibly with repetitions, of all leaves $\eta \in \lim (\mct)$. 
Let $G$ be the function from $\{ f_{i,\zeta} : i < \lambda, \zeta < \mu \} \times \mu$ to $ \{ 0, 1 \}$ given by 
\begin{equation}
 G (f_{i, \zeta}, \epsilon) = 1 \mbox{ iff \hspace{5mm}} \mx_{f_{i,\zeta}} \leq \ma_{\mbox{``}Q_{\nu}(b_i)\mbox{''}} \mbox{ and $\eta_\epsilon \rstr k \in \nu(k)$ for all $k<\omega$}.  
\end{equation}
We'll need a new antichain to help us divide the work: 
\begin{equation}
\mbox{let $\bar{\mc} = \langle \mc_\epsilon : \epsilon < \mu \rangle$ be given by $\mc_{\epsilon} = \mx_{\{ (\alpha + 1, \epsilon) \}} \}$.}
\end{equation}
Any element of this antichain will have nonzero intersection with any of the elements from $\ba^+_{\alpha, \theta}$. 

For each $i < \lambda$, define
\begin{equation} \label{c:one} 
\mb^\prime_{\{i \}} = \left ( \bigcup \{   \mx_{f_{i,\zeta}} \cap \mc_{ \epsilon   }           : \zeta < \mu  \mbox { and } G(f_{i,\zeta}, \epsilon) = 1   \}   \right) \cap \mb_{\{i\}}. 
\end{equation}
Define 
\[ \mb^\prime_u = \bigcap_{i \in u} \mb^\prime_{\{i\}}, \mbox{ and let } \bar{\mb}^\prime = \langle \mb^\prime_u : u \in [\lambda]^{<\aleph_0} \rangle. \]
By definition, $\bar{\mb}^\prime$ is multiplicative.  Again we address the hypotheses of Definition \ref{d:perfect-0}.
For (a), suppose for a contradiction that for some $u \in [\lambda]^{<\aleph_0}$ there were a nonzero 
\[ \mc \leq  \mb^\prime_u \setminus \mb_u . \]
Without loss of generality,  
\begin{equation}
\label{ce}
\mc \leq \mc_\epsilon \mbox{ for some } \epsilon < \mu,  
\end{equation}
and also 
\begin{equation}
\label{wlog}
\mbox{ $\mc$ is either below or disjoint to all elements in 
$\{ \mx_{f_{i,\zeta}} : i \in u, \zeta < \mu \}$}.
\end{equation}  So for each $i\in u$ there is some $\zeta_i < \mu$  with $\mc \leq \mx_{f_{i,\zeta}}$, %[and by (\ref{wlog}), $\zeta_i$ is unique] 
and then by (\ref{ce}) and the definition (\ref{c:one}), 
\begin{equation}
\mc \leq \bigcap_{i \in u} \mx_{f_{i,\zeta_i}} \mbox{ and }  \bigwedge_{i \in u}  G(f_{i,\zeta_i}, \epsilon) = 1. 
\end{equation}
Now by our hypothesis, $\mc \leq \mb_{\{i\}}$, meaning
\begin{equation} \label{c-1}
\mc \leq \ma[\exists x R(b_i, x)] ~~ \mbox{ for $i \in u$ }. 
\end{equation}
And $\mc \cap \mb_u = \emptyset$ means 
\begin{equation} \label{c-2}
\mc \leq \ma[\neg \exists x \bigwedge_{i \in u} R(b_i,x)]. 
\end{equation}
Recalling the definition of $G$ and (\ref{ce}), for every $i \in u$, 
\begin{equation} \label{c-3}
\mc \leq \mx_{f_{i,\zeta}} \leq \ma_{\mbox{``}(\exists x)(R(b_i,x) \land \leaf(x) = \eta_\epsilon)\mbox{''}}. 
\end{equation}
But (\ref{c-1}), (\ref{c-2}), and (\ref{c-3}) together are a contradiction.  [Why? (\ref{c-2}) must be witnessed by full splitting at some finite stage, 
but (\ref{c-3}) guarantees that at every finite stage there is a specific piece of the successor partition which is reserved for $x$.] 
This contradiction shows that condition (a) of the definition of perfect holds, so $\bar{\mb}^\prime$ is indeed a multiplicative 
refinement of $\bar{\mb}$. 

\br
For (b), it will suffice to show that for any $\mc \in \de_*$ such that $\supp(\ma) \subseteq \alpha$, and any finite $u \subseteq \lambda$,
\[ \mc \cap \bigcap \{ \mb^\prime_{\{i\}} \cup (1 - \mb_{\{i\}}) : i \in u \} > 0. \]
Without loss of generality, we can write $u = v \cup w$ where for each $i \in v$, $\mc \leq 1 - \mb_{\{i\}}$ and for each 
$i \in w$, $\mc \leq \mb_{\{i\}}$. If $w$ is empty we are done, so suppose not, and it will suffice to show that 
\[ \mc \cap \bigcap \{ \mb^\prime_{\{i\}} : i \in w \} > 0. \]
As $\mb_w \in \de_*$, without loss of generality 
\[ \mc \leq \mb_w. \]
Recalling Fact B from the beginning of the proof, let $k_*$ be minimal so that $f(k_*) > |w|$, and then 
\[ \bigcup \{ \ma[ ~(\exists x)\bigwedge_{i \in w} R(b_i,x) \land P_\eta(x)~] ~:~ \eta \in \mct_{k_*} \} = \mb_w \]
so, after possibly shrinking $\mc$ by taking intersections, we may assume there is $\eta_* \in \mct_{k_*}$ such that 
\[ \mc \leq  \ma[(\exists x)\bigwedge_{i \in w} R(b_i,x) \land P_{\eta_*}(x)]. \]
Again by Fact B, this implies there is some $\eta \in \lim (\mct)$ such that $\eta_* \tlf \eta$ and for all finite $k$, 
\[ \mc \cap  \ma[(\exists x)\bigwedge_{i \in w} R(b_i,x) \land P_{\eta \rstr k}(x)] > 0\]
so as $\theta > 0$, without loss of generality 
\[ \mc \leq \ma_{  \mbox{``}    (\exists x)(R(b_i,x) \land \leaf(x) = \eta)   \mbox{''}    }. \]
Recalling the support (\ref{the-support}), after possibly shrinking $\mc$ by taking intersections, there are $\zeta_i$ for each $i \in w$ such that 
\[ \mc \leq \mx_{f_{i,\zeta}} \]
i.e. for each $i \in w$, 
\[ 0 <  \mx_{f_{i,\zeta_i}} \cap \mc \leq \ma_{  \mbox{``}    (\exists x)(R(b_i,x) \land \leaf(x) = \eta)   \mbox{''}    }. \]
Our choice of partitions in (\ref{c:a3}) and (\ref{the-support}) means that for each $i \in w$ there is $\nu_i \in \lim (\mcs)$ such that 
\[  \mx_{f_{i,\zeta_i}} \leq \ma_{``Q_{\nu_i}(b_i)''}. \]
Recalling (\ref{e:leaf}), we conclude from these two equations that $\eta \rstr k \in \nu_i(k)$ for all $k<\omega$, for each $i \in w$. 
So letting $\epsilon$ be such that $\eta = \eta_\epsilon$ in the enumeration (\ref{enum-eta}), $G(f_{i,\zeta_i}, \epsilon) = 1$ for all $i \in w$. 
We have shown that 
\[ \mc \cap \bigcap_{i \in w} \mx_{f_{i,\zeta_i}}  \cap \mc_\epsilon > 0 \]
and this suffices. 

\br
\noindent This completes Case 2, and so completes the proof of the Theorem. 
\end{proof}

\vspace{5mm}

\section{Consequences for Keisler's order}

\begin{theorem} Let $f \in \mcf$. 
In Keisler's order, $\tsf$ is strictly above the theory of the random graph.
\end{theorem}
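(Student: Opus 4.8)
"Strictly above" means two things: (i) $\trg \trianglelefteq \tsf$, and (ii) $\tsf \not\trianglelefteq \trg$. I would prove these separately, then combine. Recall from Definition \ref{d:keisler} that $\trg \trianglelefteq \tsf$ is the assertion that every regular ultrafilter good for $\tsf$ is good for $\trg$, while $\tsf \not\trianglelefteq \trg$ asks for one regular ultrafilter (on some $\lambda$) that is good for $\trg$ but not for $\tsf$.

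\textbf{Direction (i).} The plan is to reduce to the fact recalled in the introduction that, among the unstable theories, the random graph is the $\trianglelefteq$-least (Malliaris--Shelah). So it suffices to check that $\tsf$ is unstable, which I would do by showing that $R(x,y)$ has the independence property. Fix a branch $\nu = \langle s_k : k<\omega \rangle \in \lim(\mcs)$; since $|s_k \cap \mct_k| = \prod_{j<k} f(j) \to \infty$ and $s_k \subseteq s_{k+1}$, one may pick countably many distinct leaves $\langle \eta_i : i<\omega\rangle$ of $\mct$ lying along $\nu$ (that is, $\eta_i \rstr k \in s_k$ for all $k$), and then pick $a_i \in P_{\eta_i}$ in $\mathfrak{C}_{\tsf}$. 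For any finite $w \subseteq \omega$ and any $k$, the initial-segment closure of $\{\eta_i \rstr k : i\in w\}$ is a subtree of the $k$-maximal tree $s_k$, hence contains no full successor family and extends to some $k$-maximal $s \in \mcs_k$; so by the quantifier-elimination analysis of Claim \ref{c5} (case (a) of its proof) the formula $Q_s(x)\wedge \bigwedge_{i\in w} R(x,a_i) \wedge \bigwedge_{i\notin w}\neg R(x,a_i)$ is consistent with $\tsf$ and hence realized in $\mathfrak{C}_{\tsf}$. Thus $R(x;-)$ shatters $\{a_i : i<\omega\}$, so $\tsf$ is unstable and therefore $\trg \trianglelefteq \tsf$. (Alternatively one could transfer types directly via Claim \ref{up-types}, but invoking instability plus the known minimality of $\trg$ is shorter.)

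\textbf{Direction (ii).} Here the plan is to exhibit a single regular ultrafilter good for $\trg$ but not for $\tsf$. Choose parameters with $\theta=\aleph_0$ and $\mu^{++}\le\lambda$ (for concreteness $\mu=\aleph_0$, $\lambda=\aleph_2$), and set $\ba = \ba^1_{2^\lambda,\mu,\aleph_0}$. By the constructions of \cite{MiSh:999} and \cite{MiSh:1030} (using that for the random graph a single color, i.e. this small $\mu$, suffices — e.g. a perfect ultrafilter for suitable $(\lambda,\mu,\aleph_0,\aleph_0)$ works) there is, in ZFC, a regular ultrafilter $\de$ on $I$ with $|I|=\lambda$, built from some $(\de_0,\ba,\de_*)$ via a surjection as in Definition \ref{d:built}, with $\de$ good for $\trg$. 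I claim $\de$ is not good for $\tsf$: if it were, then by Separation of Variables (Theorem \ref{t:separation}) $\de_*$ would be $(\lambda,\ba,\tsf)$-moral, hence also $(\kappa^+,\tsf)$-moral for $\kappa=\mu^+$ (since $\kappa^+=\mu^{++}\le\lambda$ and morality at a given index size passes to smaller index sizes), contradicting Lemma \ref{c8}, which says no ultrafilter of $\ba^1_{2^\lambda,\mu,\aleph_0}$ is $(\kappa^+,\tsf)$-moral when $\mu<\kappa$. So this $\de$ witnesses $\tsf\not\trianglelefteq\trg$, and combined with (i) we conclude $\tsf$ is strictly above $\trg$ in Keisler's order.

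\textbf{Main obstacle.} The mathematical content is essentially all in the already-established Lemma \ref{c8}; the remaining non-routine input is the ZFC existence, in Direction (ii), of a regular ultrafilter good for $\trg$ that is built from a Boolean algebra $\ba^1_{2^\lambda,\mu,\aleph_0}$ with $\mu$ small relative to $\lambda$. Rather than reprove this I would cite \cite{MiSh:999}, \cite{MiSh:1030}, after which the proof is just the bookkeeping above together with Lemma \ref{c8} and Separation of Variables; Direction (i) is comparatively soft once instability of $\tsf$ is verified.
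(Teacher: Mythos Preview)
Your proposal is correct and follows essentially the same approach as the paper: for direction (i), invoke the minimality of $\trg$ among unstable theories (the paper simply asserts $\tsf$ is unstable, while you spell out an IP argument); for direction (ii), combine Lemma~\ref{c8} with the existence of a regular ultrafilter built from $\ba^1_{2^\lambda,\aleph_0,\aleph_0}$ that is good for $\trg$. The only cosmetic difference is that the paper cites \cite{MiSh:1009} Theorem~3.2 (rather than \cite{MiSh:999}, \cite{MiSh:1030}) for this last ultrafilter, and applies Lemma~\ref{c8} directly with $\mu=\aleph_0$ rather than routing through the monotonicity of morality.
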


\begin{proof}
Recall that $\trg$ is minimum in Keisler's order among the unstable theories (\cite{mm4}, Conclusion 5.3). 
So as $\tsf$ is unstable, $\trg \trianglelefteq \tsf$. 
By Lemma \ref{c8}, if $\de$ is a regular ultrafilter on $\lambda$ built from $(\de_0, \ba = \ba^1_{2^\lambda, \aleph_0, \aleph_0}, \de_*)$ where 
$\de_0$ is any regular good [or so-called excellent] filter on $\lambda$ and $\de_*$ is any ultrafilter on $\ba$, then $\de$ is not good for $\tsf$. On the other hand, by 
\cite{MiSh:1009} Theorem 3.2 in the case $\mu = \aleph_0$, there is such an ultrafilter which is good for $\trg$. 
This shows that $\trg \triangleleft \tsf$. 
\end{proof}

We recall the higher analogues of the triangle-free random graph, studied by 
Hrushovski \cite{h:letter}. In particular, he proved that each $T_{n,k}$ is simple unstable with trivial forking for $n>k\geq 2$.

\begin{defn}  Recall that 
$T_{n,k}$ denotes the generic $(n+1)$-free $(k+1)$-hypergraph, i.e. the model completion of the theory $($in a language with a single 
$(k+1)$-place relation, interpreted as a hyperedge, so symmetric and irreflexive$)$ stating that there do not exist $(n+1)$ distinct elements 
of which every $(k+1)$ are an edge. 
\end{defn}

The infinite descending chain in Keisler's order obtained in \cite{MiSh:1050} was given by $\cdots T_m \tlfn T_{n-1} \tlfn \cdots T_1 \tlfn  T_0$ where 
$T_n$ is the disjoint union of the theories $T_{k+1,k}$ for $k > 2n+2$.

In the Appendix, Theorem \ref{theta-2} below we update \cite{MiSh:1050} Claim 5.1 to allow for the possibility of uncountable $\theta$. 

\begin{thm-e}[\cite{MiSh:1050}, Claim 5.1 for possibly uncountable $\theta$, Theorem \ref{theta-2} below] \label{theta-2}
Suppose that:  
\begin{enumerate}
\item for 
 integers $2 \leq k < \ell$, and e.g. $\theta = \aleph_1$, $\mu = 2^{\aleph_0}$, $\lambda = \mu^{+\ell}$, 
\\ or just: $(\lambda, k, \mu^+) \rightarrow k+1$ in the sense of \cite{MiSh:1050} Notation 1.2 
\item $\ba = \ba^1_{2^\lambda, \mu, \theta}$ 
\item $\de_*$ is an ultrafilter on $\ba$
\item $T = T_{k+1,k}$ 
\end{enumerate}
Then $\de_*$ is not $(\lambda, T)$-moral. 
\end{thm-e}

\begin{theorem} \label{t:izfc} For any finite $k \geq 2$, 
$\tsf$ and $T_{k+1,k}$ are incomparable in Keisler's order.  More precisely:

\begin{enumerate}
\item  let $\de$ be a $(\lambda, \mu, \aleph_0, \aleph_0)$-perfected ultrafilter on $\lambda$ where %$\ba_{2^\lambda, \mu, \theta}$ where 
\[ \lambda = \aleph_{k-1}, ~\mu = \aleph_0, ~\theta =\aleph_0. \]  
Then $\de$ is not good for $\tsf$, but 
it is good for $T_{k+1,k}$. 

\item let $\de_*$ be a $(\lambda, \mu, \aleph_1, \aleph_0)$-perfected ultrafilter on $\lambda$ where 
\[ \mu \geq 2^{{\aleph_0}} \mbox{ and } \lambda = \mu^{+n} \mbox{ for } n > k+1. \] 
Then $\de$ is good for $\tsf$, but it is not good for $T_{k+1,k}$. 
\end{enumerate}
\end{theorem}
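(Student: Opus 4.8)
The plan is to establish incomparability by exhibiting, for each fixed finite $k \ge 2$, two regular ultrafilters: one (item (1)) good for $T_{k+1,k}$ but not for $\tsf$, and one (item (2)) good for $\tsf$ but not for $T_{k+1,k}$. Both are perfected ultrafilters, so I would first record that the displayed parameter tuples are suitable in the sense of Definition \ref{d:suitable} and hence that the ultrafilters exist: for (1), $(\aleph_{k-1}, \aleph_0, \aleph_0, \aleph_0)$ is suitable because $k \ge 2$ (existence being \cite{MiSh:1030} \S 9); for (2), $(\mu^{+n}, \mu, \aleph_1, \aleph_0)$ is suitable once $\mu \ge 2^{\aleph_0}$, say $\mu = 2^{\aleph_0}$ (existence being the Existence theorem, Theorem \ref{t:perfect-exists}). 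In each case one has $\de_*$ on the relevant $\ba = \ba^1_{2^\lambda, \mu, \theta}$ and $\de$ on $\lambda$ built from $(\de_0, \ba, \de_*)$ as in Definition \ref{d:built}, and the dictionary between ``$\de$ good for $T$'' and ``$\de_*$ is $(\lambda, \ba, T)$-moral'' is supplied by Theorem \ref{t:separation}.

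Item (2) is then read off from results already in hand. That $\de$ is good for $\tsf$ is precisely Theorem \ref{theta}: the hypotheses $(\lambda, \mu, \aleph_1, \aleph_0)$ suitable, $\mu \ge 2^{\aleph_0}$, and $\theta = \aleph_1 \ge \aleph_1$ all hold. That $\de$ is not good for $T_{k+1,k}$ follows from Theorem \ref{theta-2}: since $\lambda = \mu^{+n}$ with $n > k+1$, one takes $\ell = n$, so that $2 \le k < \ell$, and concludes that $\de_*$ is not $(\lambda, T_{k+1,k})$-moral on $\ba^1_{2^\lambda,\mu,\aleph_1}$; Theorem \ref{t:separation} then gives that $\de$ is not good for $T_{k+1,k}$.

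For item (1), the non-saturation half is Lemma \ref{c8}. Because $\mu = \aleph_0 < \aleph_{k-1} = \lambda$, that lemma (for $k \ge 3$, applied with $\kappa$ such that $\kappa^+ = \lambda$, so $\mu < \kappa = \aleph_{k-2}$; for $k = 2$, by the same construction carried out with $\kappa = \aleph_1$, which already produces a possibility pattern of size $\aleph_1 = \lambda$ admitting no multiplicative refinement) shows that $\de_*$ is not $(\lambda, \ba, \tsf)$-moral, hence by Theorem \ref{t:separation} that $\de$ is not good for $\tsf$. The saturation half --- $\de$ good for $T_{k+1,k}$ --- is the one place the argument must reach outside this excerpt: the polarized partition relation $(\lambda, k, \aleph_1) \to k+1$ of \cite{MiSh:1050} has threshold $\aleph_k = \aleph_0^{+k}$ (the generalized Kuratowski characterization underlying \cite{MiSh:1050}; see \cite{ehmr}, \cite{KoSh:645}), hence \emph{fails} at $\lambda = \aleph_{k-1}$, and in exactly this regime the positive saturation theorem for $T_{k+1,k}$ of \cite{MiSh:1050} says that every $(\aleph_{k-1}, \aleph_0, \aleph_0, \aleph_0)$-perfected ultrafilter is good for $T_{k+1,k}$. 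Combining the four facts, item (1) gives $T_{k+1,k} \not\trianglelefteq \tsf$ and item (2) gives $\tsf \not\trianglelefteq T_{k+1,k}$, so $\tsf$ and $T_{k+1,k}$ are incomparable.

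The step I expect to be the main obstacle is the positive direction of item (1): it rests on the precise arithmetic threshold of $(\lambda, k, \aleph_1) \to k+1$ (one must know that $\aleph_{k-1}$ lies strictly below it for \emph{every} $k \ge 2$) and on importing the positive saturation result for $T_{k+1,k}$ from \cite{MiSh:1050}, with the small but necessary check that its notion of ``perfected ultrafilter'' coincides with the one used here --- which it does, since Definitions \ref{d:perfect-0} and \ref{d:perfected} are cosmetic extensions of the countable-$\theta$ definitions of \cite{MiSh:1050} and agree with them when $\theta = \aleph_0$. A secondary point needing care is the $k = 2$ instance of the non-saturation half, where $\lambda = \aleph_1 = \aleph_0^+$ sits exactly at the boundary and one must extract the slightly sharper conclusion from the proof of Lemma \ref{c8} rather than from its quoted form.
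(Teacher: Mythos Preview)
Your proposal is correct and follows the same route as the paper: for item (1), non-saturation from Lemma~\ref{c8} and saturation from \cite{MiSh:1050} Theorem~4.1; for item (2), saturation from Theorem~\ref{theta} and non-saturation from Theorem~\ref{theta-2}, in both cases passing through separation of variables (Theorem~\ref{t:separation}). The paper's own proof is a terse four-line citation of exactly these results.

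Your added care is worthwhile and the paper elides it. In particular, your observation about $k=2$ is a genuine subtlety: with $\lambda = \aleph_1$ and $\mu = \aleph_0$, the \emph{stated} conclusion of Lemma~\ref{c8} (not $(\kappa^+, \tsf)$-moral for $\mu < \kappa$) only gives ``not $(\aleph_2, \tsf)$-moral,'' which is too weak. You correctly note that the \emph{proof} of Lemma~\ref{c8} actually builds a possibility pattern of size $\kappa$ with no multiplicative refinement, so taking $\kappa = \aleph_1 = \lambda$ one gets a $\lambda$-sized obstruction directly, which is what separation of variables needs. (A simpler uniform choice: take $\kappa = \aleph_1$ for all $k \geq 2$, since any $\kappa \leq \lambda$ pattern pads to a $\lambda$-pattern; there is no need to split into $k \geq 3$ versus $k=2$.) Your remark that the positive saturation half of item (1) depends on importing \cite{MiSh:1050} Theorem~4.1 and on the compatibility of the two notions of ``perfected'' is also accurate, and the paper simply takes both for granted.
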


\begin{proof}
(1) The non-saturation is Lemma \ref{c8} via separation of variables, and the saturation is \cite{MiSh:1050} Theorem 4.1. 

(2) The saturation is Theorem \ref{theta}, and the non-saturation is Theorem \ref{theta-2} via separation of variables. 
\end{proof}

\vspace{5mm}

\section*{Appendix: perfect ultrafilters for uncountable $\theta$}

\setcounter{section}{9}

In \cite{MiSh:1030},   we considered \emph{suitable} tuples of cardinals $(\lambda, \mu, \theta, \sigma)$, see Definition \ref{d:suitable}. 
We defined 
\[ \mbox{ ``$\de$ is a $(\lambda, \mu, \theta, \sigma)$-ultrafilter on the Boolean algebra $\ba^1_{2^\lambda, \mu, \theta}$'' } \]
in the case where $\theta = \sigma = \aleph_0$, and we proved that such ultrafilters did indeed exist. 

In this Appendix, we upgrade that definition and existence proof to include the case of uncountable $\theta$. The proof is almost word-for-word the same as that of \cite{MiSh:1030} \S 9, 
but to eliminate doubt, we have reproduced that proof here with the minor changes. 
We defined ``support'' in \ref{d:support} above and ``perfect'' in \ref{d:perfect-0} above. 

\begin{conv}
Throughout this section we assume:
\[ \lambda \geq \mu^{<\theta} \geq  \theta = \cf(\theta) \geq \sigma = \aleph_0. \]
Without loss of generality we may assume $\theta > \sigma$, as the case $\theta = \sigma = \aleph_0$ was the case of \cite{MiSh:1030} $\S 9$. 
\end{conv}

\begin{obs} \label{o:support}
Suppose $\alpha < 2^\lambda$ is fixed, $D_\alpha$ is an ultrafilter on $\ba^1_{\alpha, \mu, \theta} \subseteq \ba = \ba^1_{2^\lambda, \mu, \theta}$, and 
$\langle \mb_u : u \in [\lambda]^{<\aleph_0} \rangle$ is a sequence of elements of $D_\alpha$. Suppose 
that there exists a multiplicative sequence 
$\langle \mb^\prime_u : u \in [\lambda]^{<\aleph_0} \rangle$
of elements of $\ba^+$ 
such that
\begin{itemize} 
\item[(a)] $\mb^\prime_{u} \leq \mb_{u}$ for all $u \in [\lambda]^{<\aleph_0}$,  
\item[(b)] for every $\mc \in \ba^+_{\alpha, \mu, \theta} \cap \de_\alpha$, 
no intersection of finitely many members of  
$\{ \mb^\prime_{\{i\}} \cup (1-\mb_{\{i\}}) : i < \lambda \}$ %\]
is disjoint to $\mc$. 
\end{itemize}
Then there is a multiplicative sequence $\langle \mb^{\prime\prime}_u : u \in [\lambda]^{<\aleph_0} \rangle$ 
such that (a), (b) hold with $\mb^\prime_u$, $\mb^\prime_{\{i\}}$ 
replaced by $\mb^{\prime\prime}_u$, $\mb^{\prime\prime}_{\{i\}}$ respectively, and such that 
some support of $\bar{\mb^{\prime\prime}}$ is contained in $\ba_{\alpha + \lambda, \mu, \theta}$.
\end{obs}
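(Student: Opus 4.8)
The plan is to take the given multiplicative sequence $\langle \mb^\prime_u : u \in [\lambda]^{<\aleph_0} \rangle$ and ``move'' it into the Boolean subalgebra generated by the coordinates in $\alpha + \lambda$, by choosing for each singleton a suitable generator-element below $\mb^\prime_{\{i\}}$ (or below $1 - \mb_{\{i\}}$, as appropriate) that lives on fresh coordinates, and then closing up multiplicatively. First I would, for each $i < \lambda$, use density of the generators in the completion $\ba = \ba^1_{2^\lambda,\mu,\theta}$ to pick $g_i \in \fin_{\mu,\theta}(2^\lambda)$ with $\mx_{g_i} \leq \mb^\prime_{\{i\}}$; then, using that only fewer than $\theta \leq \lambda^+$ coordinates appear in $\dom(g_i)$, compose with a bijection of $2^\lambda$ (or rather, relabel) so as to assume $\dom(g_i) \subseteq [\alpha, \alpha+\lambda)$ while preserving which coordinates are shared between different $g_i$'s only when forced — more precisely, I would relabel so that $\dom(g_i) \cap \alpha = \emptyset$ for all $i$ but keeping compatibility relations intact, which is possible because the defining relations of $\ba$ (Definition~\ref{d:ba} and the Convention after \ref{d:fin}) only refer to agreement on common domains, hence are preserved by any injection of the index set $2^\lambda$.

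Then I would set $\mb^{\prime\prime}_{\{i\}} = \mx_{g_i}$ for those $i$, and extend multiplicatively by $\mb^{\prime\prime}_u = \bigcap_{i \in u} \mb^{\prime\prime}_{\{i\}}$. The key verifications are: (a) $\mb^{\prime\prime}_u \leq \mb^\prime_u \leq \mb_u$, which is immediate since $\mb^{\prime\prime}_{\{i\}} = \mx_{g_i} \leq \mb^\prime_{\{i\}}$ and intersection is monotone; multiplicativity is automatic by construction; and the support condition, since each $\mx_{g_i} \in \ba_{\alpha+\lambda,\mu,\theta}$ and the set $\{ \mx_{g_i} : i < \lambda \}$ together with the relevant $\leq 1 - \mb_{\{i\}}$ pieces gives a support contained in $\ba_{\alpha+\lambda,\mu,\theta}$ after closing under the needed finite intersections (which stay in that subalgebra). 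For (b), I need: for every $\mc \in \ba^+_{\alpha,\mu,\theta} \cap \de_\alpha$, no finite intersection from $\{ \mb^{\prime\prime}_{\{i\}} \cup (1 - \mb_{\{i\}}) : i < \lambda\}$ is disjoint from $\mc$. Here I would argue that since $\mc$ uses only coordinates below $\alpha$ and each $\mx_{g_i}$ uses only coordinates in $[\alpha, \alpha+\lambda)$, the element $\mx_{g_i}$ is ``independent'' of $\mc$ in the sense that $\mc \cap \mx_{g_i} > 0$, and more generally $\mc \cap \bigcap_{i \in v} \mx_{g_i} > 0$ for finite $v$ provided $\bigcup_{i\in v} g_i$ is a function; combining with the original property (b) for $\bar{\mb}^\prime$ (which tells us $\mc \cap \bigcap_{i\in v'}(\mb^\prime_{\{i\}}) \cap \bigcap_{i \in v''}(1-\mb_{\{i\}}) > 0$ for the right partition of the index set $v = v' \cup v''$) handles the case where the $g_i$'s may clash on common domain.

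The main obstacle I expect is precisely that last point: the relabeled $g_i$'s need not have $\bigcup_{i\in v} g_i$ a function for all finite $v$, so $\bigcap_{i \in v}\mx_{g_i}$ could be $0$, which would be fatal for both multiplicativity-with-nonzero-values (we need elements of $\ba^+$) and for (b). The fix is to observe we have freedom in choosing the relabeling: when we move $\dom(g_i)$ into $[\alpha,\alpha+\lambda)$, if $i \neq j$ and $\mx_{g_i} \cap \mx_{g_j} = 0$ in the original algebra, we must preserve a clash (relabel so their images still disagree on some common coordinate), whereas if $\mx_{g_i} \cap \mx_{g_j} > 0$ originally then $g_i, g_j$ agree on common domain and we relabel so the images continue to agree there and are otherwise disjoint — so compatibility is exactly preserved. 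Then $\bigcap_{i\in v}\mx_{g_i} > 0$ iff $\bigcap_{i\in v}\mb^\prime_{\{i\}} > 0$, which holds because $\bar{\mb}^\prime$ is multiplicative with nonzero entries (it refines the possibility pattern $\bar{\mb}$ all of whose entries are in $\de_*$, hence nonzero). With that combinatorial bookkeeping in place — essentially a careful invocation of Claim~\ref{intersections}-style reasoning and the defining relations of $\ba$ — conditions (a), (b) and the support requirement all follow, completing the proof.
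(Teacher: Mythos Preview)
There is a genuine gap. Your plan relabels so that $\dom(g_i) \cap \alpha = \emptyset$, and then asserts $\mb^{\prime\prime}_{\{i\}} = \mx_{g_i} \leq \mb^\prime_{\{i\}} \leq \mb_{\{i\}}$. But this inequality held only for the \emph{original} $g_i$, before relabeling; the relabeled element lives on coordinates disjoint from $\alpha$, while $\mb_{\{i\}} \in \ba_{\alpha,\mu,\theta}$. By independence, any nonzero generator $\mx_f$ with $\dom(f) \cap \alpha = \emptyset$ meets $1 - \mb_{\{i\}}$ (choose any $\mx_h \leq 1 - \mb_{\{i\}}$ with $\dom(h) \subseteq \alpha$; then $f \cup h$ is a function, so $\mx_f \cap \mx_h > 0$). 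Hence such an $\mx_f$ cannot be $\leq \mb_{\{i\}}$ unless $\mb_{\{i\}} = 1_\ba$. So condition (a) fails outright for your $\bar{\mb}^{\prime\prime}$.

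There is a second gap: you claim $\bigcap_{i \in v} \mx_{g_i} > 0$ iff $\bigcap_{i \in v} \mb^\prime_{\{i\}} > 0$, but only the forward direction holds. The $\mx_{g_i}$ are arbitrary generators chosen below the $\mb^\prime_{\{i\}}$; nothing prevents them from being pairwise incompatible even though the $\mb^\prime_{\{i\}}$'s have nonzero intersection. Preserving compatibility under relabeling does not help if compatibility was already absent before relabeling, so your $\mb^{\prime\prime}_u$ may be zero.

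The paper's argument avoids both problems by applying a single automorphism to the entire sequence rather than first shrinking to generators. Since $\bar{\mb}^\prime$ has a support of size $\leq \lambda$, say using coordinates in some $\mcv \subseteq 2^\lambda$ with $|\mcv| = \lambda$, one chooses a permutation $\pi$ of $2^\lambda$ that is the identity on $\alpha$ and carries $\mcv$ into $\alpha + \lambda$. The induced automorphism $\rho$ of $\ba$ then fixes $\ba_{\alpha,\mu,\theta}$ pointwise (so fixes each $\mb_u$ and each $\mc \in \de_\alpha$), and $\mb^{\prime\prime}_u := \rho(\mb^\prime_u)$ inherits multiplicativity, the refinement (a), and condition (b) directly as $\rho$-images of the original relations; its support is the $\rho$-image of the old support, now inside $\ba_{\alpha+\lambda,\mu,\theta}$. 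The point you were missing is that the permutation must be the identity on $\alpha$ so that the $\mb_u$'s and all of $\de_\alpha$ are fixed, and that one should move the whole $\mb^\prime_u$ rather than a single generator below it.
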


\begin{proof} 
Without loss of generality there is $\mcv$ of cardinality $\lambda$ such that some support of $\bar{\mb}^\prime$ is contained in 
$\{ \mx_f : f \in \fin_{\mu, \theta}(\mcv) \}$. Let $\pi$ be a permutation of $2^\lambda$ which is the identity on $\alpha$ and takes $\uu$ into $\alpha + \lambda$.  
This induces an automorphism $\rho$ of $\ba$ which is the identity on $\ba_{\alpha, \mu, \theta}$, so in particular is the identity on 
$\de_\alpha$ and thus on $\bar{\mb}$. For each $u \in [\lambda]^{<\aleph_0}$, let $\mb^{\prime\prime}_u = \rho(\mb^\prime_u)$. 
Then clearly $\bar{\mb^{\prime\prime}}$ fits the bill. 
\end{proof}

\begin{theorem}[Existence] \label{t:perfect-exists}
Let $(\lambda, \mu, \theta, \aleph_0)$ be suitable.  
Let $\ba = \ba^1_{2^\lambda, \mu, \theta}$. Then there exists a $(\lambda, \mu, \theta, \aleph_0)$-perfect ultrafilter on $\ba$.
\end{theorem}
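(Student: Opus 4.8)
The plan is to follow the construction in \cite{MiSh:1030}, \S 9, essentially verbatim, changing only what the replacement of $\aleph_0$ by a (possibly uncountable, regular) $\theta$ in the index $\theta$ of $\ba = \ba^1_{2^\lambda, \mu, \theta}$ forces us to change. The overall shape is a transfinite induction of length $2^\lambda$ building an increasing chain of ultrafilters $\langle \de_\alpha : \alpha < 2^\lambda \rangle$, where $\de_\alpha$ is an ultrafilter on the subalgebra $\ba^1_{\alpha, \mu, \theta} \subseteq \ba$ generated by the first $\alpha$ partitions, and $\de_* = \bigcup_\alpha \de_\alpha$; at the end one checks $\de_*$ is perfect in the sense of Definition \ref{d:perfect-0}. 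The bookkeeping enumerates, with cofinal repetition, all pairs $(\bar{\mb}, \supp(\bar{\mb}))$ with $\bar{\mb}$ a monotonic sequence of elements of $\ba$ indexed by $[\lambda]^{<\aleph_0}$ and $\supp(\bar{\mb})$ a support of cardinality $\le \lambda$; since $\lambda = \lambda^{<\theta}$ and the number of such objects is $2^\lambda$, we can arrange that each such pair is treated at some stage $\alpha$ large enough that $\bigcup\{\dom(f) : \mx_f \in \supp(\bar{\mb})\} \subseteq \alpha$ and $\bar{\mb} \subseteq \de_\alpha$.

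The core of each successor step is this: given $\de_\alpha$ on $\ba^1_{\alpha,\mu,\theta}$ and a monotonic $\bar{\mb} \subseteq \de_\alpha$ with support below $\alpha$, we must produce a multiplicative refinement $\bar{\mb}'$ of $\bar{\mb}$ by elements of $\ba^+$ satisfying clauses (a),(b) of \ref{d:perfect-0}(A) relative to $\de_\alpha$ — i.e. each $\mb'_u \le \mb_u$, and for every $\mc \in \ba^+_{\alpha,\mu,\theta} \cap \de_\alpha$ no finite intersection of members of $\{\mb'_{\{i\}} \cup (1-\mb_{\{i\}}) : i < \lambda\}$ is disjoint from $\mc$ — and then, using \ref{o:support}, move the support of $\bar{\mb}'$ off the first $\alpha$ coordinates into the block $[\alpha, \alpha+\lambda)$ and extend $\de_\alpha$ to an ultrafilter $\de_{\alpha+\lambda}$ on $\ba^1_{\alpha+\lambda,\mu,\theta}$ containing all the $\mb'_u$. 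The existence of such a $\bar{\mb}'$ is exactly the combinatorial heart of \cite{MiSh:1030} \S 9: one builds the $\mb'_{\{i\}}$ by, for each $i$, splitting $\mb_{\{i\}}$ along its support elements and attaching fresh generators from the $\alpha$-block to keep things multiplicative while clause (b) is arranged by a density/independence argument exploiting that the generators $\mx_g$ with $\dom(g) \subseteq [\alpha,\alpha+\lambda)$ are independent over $\ba^1_{\alpha,\mu,\theta}$, so any $\mc \in \ba^+_{\alpha,\mu,\theta}$ meets every such generator. The inequalities $\mu = \mu^{<\theta}$, $\lambda = \lambda^{<\theta}$, and $(\forall \alpha<\theta)(2^{|\alpha|}<\mu)$ from suitability are what make the cardinal arithmetic in the enumeration and in the $\mu^+$-c.c. arguments go through.

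At limit stages $\delta < 2^\lambda$ we simply take $\de_\delta = \bigcup_{\alpha<\delta}\de_\alpha$, which is an ultrafilter on $\ba^1_{\delta,\mu,\theta} = \bigcup_{\alpha<\delta}\ba^1_{\alpha,\mu,\theta}$; note that because $\ba$ is the \emph{completion} of the free algebra, we should take care that $\bigcup_{\alpha<2^\lambda}\ba^1_{\alpha,\mu,\theta}$ is dense in $\ba$, so that $\de_*$ generated by $\bigcup_\alpha \de_\alpha$ is a genuine ultrafilter on all of $\ba$ — this is where density of the generators (noted after Definition \ref{d:ba}) is used. Finally, to see $\de_*$ is $(\lambda,\mu,\theta,\aleph_0)$-perfect: given $(A)$, i.e. a monotonic $\bar{\mb} \subseteq \de_*$ with support of size $\le \lambda$ and the clause-(b)-type hypothesis for all large $\alpha$, pick the stage $\beta$ at which this pair was handled; the $\bar{\mb}'$ constructed there has all $\mb'_u \in \de_{\beta+\lambda} \subseteq \de_*$ and refines $\bar{\mb}$ multiplicatively, giving $(B)$.

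\textbf{Main obstacle.} The genuinely delicate point is \emph{not} any single new idea but making sure the $\theta$-uncountable version of the step-combinatorics still closes: specifically that the support-manipulation in \ref{o:support} and the splitting in the successor step only ever involve finitely many or $<\theta$ coordinates at a time, so that the relevant intersections $\mx_f$ stay nonzero (this is guaranteed by the defining property of $\ba^1_{2^\lambda,\mu,\theta}$, that intersections of $<\theta$ generators from distinct partitions are nonzero), and that the antichains encountered have size $<\mu^+$ by the $\mu^+$-c.c. of $\ba$. Once one checks that every place where \cite{MiSh:1030} \S 9 used "$\theta = \aleph_0$" was really only using "$\theta$ regular, $\mu = \mu^{<\theta}$, $\lambda = \lambda^{<\theta}$, $\ba$ has $\mu^+$-c.c., and $<\theta$-fold intersections of distinct generators are nonzero", the proof transfers. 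I would therefore present the argument by quoting the structure of \cite{MiSh:1030} Theorem 9.4 and isolating exactly these hypotheses, verifying each holds under the present suitability assumptions, rather than reproducing the entire induction.
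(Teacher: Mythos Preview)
Your overall scaffolding (transfinite induction of length $2^\lambda$, bookkeeping with cofinal repetition, relocation via Observation \ref{o:support}, checking the finite intersection property, taking unions at limits) matches the paper's proof. But you have misidentified what happens at the successor step, and this is a genuine misunderstanding of the definition of ``perfect.''

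You write that at stage $\alpha$ ``we must produce a multiplicative refinement $\bar{\mb}'$ \dots\ satisfying clauses (a),(b),'' and then describe this as ``the combinatorial heart'' in which ``one builds the $\mb'_{\{i\}}$ by \dots\ splitting \dots\ and attaching fresh generators.'' This is not what the proof does, and in fact cannot be done in general. Look again at Definition \ref{d:perfect-0}: the ultrafilter is perfect when (A) \emph{implies} (B), and the existence of such a $\bar{\mb}'$ is already part of the \emph{hypothesis} (A). So at the successor stage the paper does not construct $\bar{\mb}'$; it simply checks whether the hypothesis of $(*)$ holds --- in particular whether some $\bar{\mb}'$ with (a),(b) exists --- and if so, takes that given $\bar{\mb}'$, applies Observation \ref{o:support} to relocate its support into $[z(\beta), z(\beta)+\lambda)$, and then verifies that $D_\beta \cup \{\mb''_u : u \in [\lambda]^{<\aleph_0}\}$ has the finite intersection property. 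That last verification is the only real computation, and it follows immediately from condition (ii)/(b) together with $\mb_{\{i\}} \in D_\beta$. If the hypothesis of $(*)$ fails at stage $\beta$, one does nothing for this $\bar{\mb}$ and simply extends $D_\beta$ arbitrarily to an ultrafilter on $\ba_{z(\beta+1),\mu,\theta}$. For an arbitrary monotonic $\bar{\mb}$ no such $\bar{\mb}'$ need exist at all, so the construction you sketch would fail; the point of ``perfect'' is precisely that it is conditional.
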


\begin{proof} 
Begin by letting $\langle \bar{\mb}_\delta = \langle \mb_{\delta, u} : u \in [\lambda]^{<\aleph_0} \rangle : \delta < 2^\lambda \rangle$ be an enumeration of the monotonic sequences of elements of $\ba^+$, each occurring cofinally often.  
Let $z: 2^\lambda \rightarrow 2^\lambda$ be an increasing continuous function which satisfies: $z(0) \geq 0$ and for all 
$\beta < 2^\lambda$,  $z(\beta) + \lambda = z(\beta + 1)$.
By induction on $\delta < 2^\lambda$ we will construct 
$\langle D_\delta: \delta < 2^\lambda \rangle$, an increasing continuous sequence of filters with 
each $D_\delta$ an ultrafilter on $\ba_{z(\delta), \mu, \theta}$, to satisfy:   

\begin{enumerate}
\item[(*)] \emph{if} $\delta = \beta + 1$, if it is the case that 

\begin{quotation}
\noindent $\langle \mb_{\beta, u} : u \in [\lambda]^{<\aleph_0} \rangle$ is a monotonic sequence of elements of $\de_\beta$ and   
there exists a choice of $\supp(\bar{b})$ with 
$\bigcup \{ \dom(f) : \mx_f \in \supp(\overline{\mb}) \} \subseteq \beta$
and there exists a multiplicative sequence 
\[ \langle \mb^\prime_u : u \in [\lambda]^{<\aleph_0} \rangle \]
of elements of $\ba^+$ 
such that
\begin{itemize} 
\item[(a)] $\mb^\prime_{u} \leq \mb_{\beta, u}$ for all $u \in [\lambda]^{<\aleph_0}$,  
\item[(b)] for every $\mc \in \ba^+_{z(\beta), \mu, \theta} \cap \de_\beta$, 
no intersection of finitely many members of 
$\{ \mb^\prime_{\{i\}} \cup (1-\mb_{\beta, \{i\}}) : i < \lambda \}$ %\]
is disjoint to $\mc$. 
\end{itemize}
\end{quotation}

\emph{then} there is a sequence $\bar{\mb}^{\prime\prime} = \langle {\mb}^{\prime\prime}_u : u \in [\lambda]^{<\aleph_0} \rangle$ 
of elements of $\ba^+$ such that:
\begin{enumerate}
\item[(i)] $\mb^{\prime\prime}_{u} \leq \mb_{\beta, u}$ for all $u \in [\lambda]^{<\aleph_0}$,  
\item[(ii)] for every $\mc \in \ba^+_{z(\beta), \mu, \theta} \cap \de_\beta$, 
no intersection of finitely many members of 
$\{ \mb^{\prime\prime}_{\{i\}} \cup (1-\mb_{\beta, \{i\}}) : i < \lambda \}$ %\]
is disjoint to $\mc$. 
\item[(iii)] some support of $\bar{\mb}^{\prime\prime}$ is contained in $\ba_{z(\delta), \mu}$, and 
\item[(iiv)] $D_\delta$ is an ultrafilter on $\ba_{z(\delta), \mu, \theta}$ which extends $D_\beta \cup \{ \mb^\prime_{u} : u \in [\lambda]^{<\aleph_0} \}$. 
\end{enumerate} 
\end{enumerate}
The induction may be carried out at limit stages because all of the $D_\delta$ are ultrafilters. 
Suppose $\delta = \beta + 1$. If $\bar{\mb}$ satisfies the quoted condition, then let 
$\bar{\mb}^{\prime\prime}$ be given by Observation \ref{o:support}, using $z(\beta)$ here for $\alpha$ there. 
Then (i), (ii), (iii) are satisfied, so we need to prove that 
\[ D_\beta \cup \{ \mb^{\prime\prime}_{u} : u \in [\lambda]^{<\aleph_0} \} \] 
has the finite intersection property. As $D_\beta$ is an ultrafilter on $\ba_{z(\beta), \mu, \theta}$, 
and $\bar{\mb}^\prime$ is a multiplicative sequence, it suffices to prove that for any 
$\mc \in \de_\beta$ and any finite $u \subseteq \lambda$, 
\[ \mc \cap \bigcap \{ \mb^{\prime\prime}_{\{i\}} : i \in u \} > 0. \]
As $\mb_{\{i\}} \in \de_\beta$ for each $i \in u$, we may assume that $\mc \cap (1-\mb_{\{i\}}) = 0$ for each $i \in u$. 
Then we are finished by assumption (ii). 
This completes the induction. Let $\de_* = \bigcup_{\delta < 2^\lambda} D_\delta$. 

Let us check that $\de_*$ is indeed a perfect ultrafilter.  If $\bar{\mb}$ satisfies condition \ref{d:perfect-0}(A), let $\uu$ 
be as there, and let $\delta = \beta + 1$ be an ordinal $< 2^\lambda$ such that $\bar{\mb}_\beta = \bar{\mb}$ and 
$\uu \subseteq \ba_{\beta, \mu, \theta}$, which is possible as we listed each sequence cofinally often. Then 
since $D_\beta$ was an ultrafilter, $\de_* \rstr \ba_{\beta, \mu, \theta} = \de_\beta$ so at stage $\delta$ 
condition (*) of the inductive hypothesis will be activated and 
we will have ensured that $\bar{\mb}$ has a multiplicative refinement in $\de_*$.  
\end{proof}

\vspace{5mm}

\section*{Appendix: non-saturation for $T_{k+1,k}$ and uncountable $\theta$}

\setcounter{section}{10}
\setcounter{equation}{0}

In this Appendix we update the non-saturation result from \cite{MiSh:1050} to allow for possibly uncountable $\theta$. The proof is the same. It has 
just been slightly rewritten for readability, since it seems a good occasion to call attention to Question \ref{q:qn}. 

\begin{theorem}[\cite{MiSh:1050}, Claim 5.1 for possibly uncountable $\theta$] \label{theta-22}
Suppose that:  
\begin{enumerate}
\item for  
integers $2 \leq k < \ell$, $\theta \geq \aleph_0$, and e.g. $\mu = 2^{\aleph_0}$, $\lambda = \mu^{+\ell}$, 
\\ or just: 
 $(\lambda, k, \mu^+) \rightarrow k+1$ in the sense of \cite{MiSh:1050} Notation 1.2 
\item $\ba = \ba^1_{2^\lambda, \mu, \theta}$   [for $\theta$ possibly uncountable] 
\item $\de_*$ is an ultrafilter on $\ba$
\item $T = T_{k+1,k}$ 
\end{enumerate}
Then $\de_*$ is not $(\lambda, T)$-moral. 
\end{theorem}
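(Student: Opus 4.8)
The plan is to reprise the argument of \cite{MiSh:1050}, Claim 5.1, verifying that none of its steps used the countability of $\theta$. We work in the separation-of-variables setup: fix a background set $I$ of size $\lambda$, a surjective homomorphism $\jj : \mcp(I) \to \ba$, and suppose toward a contradiction that $\de_*$ is $(\lambda, T)$-moral with $T = T_{k+1,k}$. By Theorem \ref{t:separation}, this means that in any regular ultrafilter $\de$ built from $(\de_0, \ba, \de_*)$, every type over a model of $T$ in an ultrapower of size $\lambda$ is realized; equivalently, every possibility pattern coming from such a type has a multiplicative refinement in $\de_*$. We will exhibit a possibility pattern with no multiplicative refinement.

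The type to build is the generic one: in a model $M \models T_{k+1,k}$ (the generic $(k+1)$-free $(k+1)$-hypergraph), take $\lambda$ parameters $\bar a = \langle a_\alpha : \alpha < \lambda \rangle$ and consider $p(x) = \{ x \text{ forms a hyperedge with each } k\text{-subset of } \bar a \text{ that is already a hyperedge} \}$; more precisely one arranges the $a_\alpha$ so that every $k$-element subset is a hyperedge, so $p$ asks for a new vertex completing each $(k+1)$-clique. Such a type is finitely satisfiable, so the corresponding $\bar{\mb} = \langle \mb_u : u \in [\lambda]^{<\aleph_0} \rangle$ with $\mb_u = \jj(\{ t : M \models \exists x \bigwedge R(x, \bar a_u[t]) \})$ is a possibility pattern. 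As in Lemma \ref{c8}, we do not literally build the $a_\alpha$; rather, we define the relevant Boolean values $\ma[\cdot] \in \ba^+$ directly using the independent generators $\mx_g$, assigning to each $\alpha$ a fresh block of coordinates (indexed, say, by $\{\omega\alpha + j : j < \omega\}$ or a finite segment thereof) that records which $k$-hyperedges among prior parameters $a_\alpha$ is adjacent to. The combinatorial hypothesis $(\lambda, k, \mu^+) \to k+1$ (a polarized partition / free-set statement, holding e.g. when $\mu = 2^{\aleph_0}$ and $\lambda = \mu^{+\ell}$ with $\ell > k$) is exactly what is needed to find, inside any purported multiplicative refinement $\bar{\mb}^\prime$, a subset of $k+1$ indices whose chosen generators $\mx_{g_\alpha} \leq \mb^\prime_{\{\alpha\}}$ are pairwise compatible (via Claim \ref{intersections} for the $\mu^+$-c.c. bookkeeping and the free-set principle to control which coordinates were "used up"), forcing an instance of the forbidden $(k+1)$-configuration on $\mc := \mx_{g^*} \neq 0$ with $g^* = \bigcup g_\alpha$. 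Since on $\mc$ the element $\mc \leq \mb^\prime_v \leq \mb_v$ asserts existence of a common vertex completing a $(k+1)$-clique that, by the partition-theoretic choice, genuinely realizes the forbidden $(k+2)$-pattern in any index model, we contradict $M \models T_{k+1,k}$.

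Concretely the steps are: (1) set up separation of variables and state what a multiplicative refinement would give; (2) define the possibility pattern $\bar{\mb}$ via generators $\mx_g$, using one coordinate-block per parameter and quantifier elimination in $T_{k+1,k}$ to pin down $\ma[\vp(\bar x_v)]$; (3) assume $\bar{\mb}^\prime$ multiplicative refines $\bar{\mb}$ and pick $g_\alpha$ with $\mx_{g_\alpha} \leq \mb^\prime_{\{\alpha\}}$; (4) use finiteness of the domains together with the $\mu^+$-c.c. and pigeonhole to pass to a subfamily of size $>\mu$ whose $g_\alpha$ agree on the relevant ``type'' coordinates; (5) invoke $(\lambda, k, \mu^+) \to k+1$ — the only place the partition hypothesis enters — to extract $k+1$ mutually compatible indices whose union is a function, so that $\mx_{g^*} > 0$ with $\mx_{g^*} \leq \bigcap_{\alpha \in v} \mx_{g_\alpha} \leq \mb_v$; (6) observe that on $\mx_{g^*}$ the Boolean computation forces a realization of the $(k+1)$-free forbidden configuration, the contradiction. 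Throughout, one checks that ``$\theta$ possibly uncountable'' changes nothing: the generators are still dense in $\ba^1_{2^\lambda, \mu, \theta}$, intersections of $<\theta$ (in particular, any finite number) of distinct generators are nonzero, Claim \ref{intersections} is stated for $\aleph_0$ but its use here is on finite sets, and the $\mu^+$-c.c. is independent of $\theta$.

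The main obstacle I expect is Step (5): correctly formulating and applying the polarized-partition hypothesis $(\lambda, k, \mu^+) \to k+1$ so that it delivers not merely $k+1$ pairwise-compatible generators but $k+1$ generators that are \emph{simultaneously} compatible (union is a function) and, moreover, avoid each other's ``reserved'' coordinates — this is what turns $\mc \leq \mb_v$ into an actual forbidden $(k+2)$-clique rather than a mere $(k+1)$-edge that $T_{k+1,k}$ permits. This is precisely the delicate counting in \cite{MiSh:1050} Claim 5.1, and since the claim here is that the proof is ``the same,'' the real content of the write-up is verifying line by line that no use of $\theta = \aleph_0$ was made; I would present it by quoting the structure of that proof and flagging the (cosmetic) changes, exactly as the Appendix on perfect ultrafilters does for Theorem \ref{t:perfect-exists}.
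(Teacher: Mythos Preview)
Your plan says you will reprise \cite{MiSh:1050} Claim 5.1, but the argument you actually sketch is not that proof: it is the template of Lemma~\ref{c8} (one coordinate-block per \emph{parameter}, pick a single $g_\alpha$ with $\mx_{g_\alpha}\leq\mb'_{\{\alpha\}}$, then hunt for $k+1$ mutually compatible ones). That template does not mesh with the hypothesis $(\lambda,k,\mu^+)\to k+1$, which is a free-set statement about maps $F:[\lambda]^k\to[\lambda]^{\leq\mu}$, not a Ramsey statement producing simultaneously compatible singletons. In particular your Step~(5)/(6) has a real gap: Claim~\ref{intersections} gives you a finite compatible family, but nothing in your setup forces the single coordinate recording ``$\{a_{\alpha_0},\dots,a_{\alpha_k}\}$ is a hyperedge'' to lie outside $\dom(g^*)$, so you cannot flip it to create the forbidden $(k+2)$-configuration while preserving $\mx_{g^*}\leq\mb_v$.

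The paper's proof is organized differently at every step. Coordinates are assigned not to parameters but to \emph{near-forbidden configurations} $w\in\mcp$ (one ordinal $\alpha_w$ and one $g_w$ with $\dom(g_w)=\{\alpha_w\}$, $\mx_{g_w}\notin\de_*$), and the possibility pattern is defined as a complement $\mb_s=1_\ba-\bigcup\{\mx_{g_w}:[w]^k\subseteq\{v_\beta:\beta\in s\}\}$, where the $v_\beta$ enumerate $[\lambda]^k$. Given a multiplicative refinement, one does \emph{not} pick a single generator below $\mb'_{\{\beta\}}$; instead one takes the full support $S_\beta=\bigcup_i\dom(h_{\beta,i})$ of an antichain decomposition of $\mb'_{\{\beta\}}$, and the key subclaim shows $\alpha_w\in\bigcup\{S_\beta:v_\beta\in[w]^k\}$ for every $w\in\mcp$. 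This manufactures a set mapping $F(v_\beta)=\bigcup\{w:\alpha_w\in S_\beta\}\cup v_\beta$ of the right shape, and the contradiction comes from a specific model $M$ (supplied by \cite{MiSh:1050} Claim 1.6) in which some $w\in\mcp$ escapes every such $F$. The sole place $\theta$ enters is the cardinality bound $|S_\beta|\leq\mu\cdot\theta=\mu$, trivially unchanged once $\theta\leq\mu$; that is the entire ``update.'' If you rewrite your proposal to follow this structure, it will go through; the Lemma~\ref{c8} template will not.
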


\begin{proof}  
Fix the objects given in the statement of the theorem. 
We'll often use the notation $T_{n,k}$ instead of $T_{k+1,k}$, but $n=k+1$ seems necessary in this proof, as we will point out.  
We will use \cite{MiSh:1050} Claim 1.6, which when applied to $(\lambda, k, \mu^+)$ gives: 
\begin{equation} \label{claim16}
\mbox{ there is a model $M \models T_{n,k}$ such that: }
\end{equation}

\begin{quotation}
\noindent $M$ has size $\geq \lambda$, and there are $\lambda$ elements of its domain $\langle b_\alpha : \alpha < \lambda \rangle$ such that if 
we let $\mcp = \{ w \in [\lambda]^n ~: ~ (\forall u \in [w]^{k+1})(~ M \models R(\bar{b}_u)) ~\}$ denote the indices for 
near-forbidden configurations,\footnote{recall that in $T_{n,k}$ the forbidden configuration is a set of $n+1$ vertices of which every $k+1$ form an edge.  So if $\{ b_\alpha : \alpha \in w \}$ is near-forbidden, then $\{ R(x,\bar{b}_v) : v \in [w]^k \}$ is not a type.} then for any $F: [\lambda]^k \rightarrow [\lambda]^{\leq \mu}$ such that $u \subseteq F(u)$ for 
all $u \in \dom(F)$,  there exists $w \in \mcp$ such that $(\forall v \in [w]^k) (w \not\subseteq F(v))$. 
\end{quotation}
Informally, the conclusion is that for any such $F$, some near-forbidden configuration escapes the control of its $k$-element subsets. 

The strategy will be to build a possibility pattern that has no multiplicative refinement. 
Fix a sequence of ordinals $\langle \alpha_w : w \in \mcp \rangle$, each $<2^\lambda$, with no repetitions. For each $w \in \mcp$, 
fix a function $g_w \in \fin_{\mu, \theta}(2^\lambda)$ such that $\dom(g_w) = \{ \alpha_w \}$ and $\mx_{g_w} = 0 \mod \de_*$. 

Let 
\begin{equation} \label{v-list} 
\langle v_\alpha : \alpha < \lambda \rangle
\end{equation} list $[\lambda]^k$ without repetition.\footnote{We are aiming at a type in the ultrapower of the 
form $\{  R(x,\bar{a}_{v_\alpha}) : \alpha < \lambda \}$. Since $u,v,w$ are used for sets of indices, we use $s \in \Omega$ for a finite set of 
formulas in the type.}  Let $\Omega = [\lambda]^{<\aleph_0}$ and for each $s \in \Omega$, let\footnote{Note that the condition in (\ref{eq17}) is set to avoid $\mx_{g_w}$ only if all $k$-element subsets of $w$ occur as $v_\beta$ for some $\beta \in s$. It's not enough that each element of $w$ occurs 
in some $v_\beta$. For example, in the tetrahedron-free three-hypergraph, 
if $w = \{ 1, 2, 3 \}$ and $\{ v_\beta : \beta \in s \} = \{ \{ 1,2\}, \{ 2, 3 \}, \{ 1, 3 \} \}$, then $\mb_s \cap \mx_{g_w} = 0$, but not if 
$\{ v_\beta : \beta \in s \} = \{ \{ 1, 2 \}, \{ 1, 3 \} \}$.} 
\begin{equation} \label{eq17}
\mb_s = 1_\ba - \bigcup \{ \mx_{g_w} : w \in \mcp \mbox{ and } [w]^k \subseteq \{ v_\beta : \beta \in s \} \}.
\end{equation}
In order to check that $\bar{\mb} = \langle \mb_s : s \in \Omega \rangle$ is really a representation of some type in the ultrapower, it will  
suffice by compactness to argue as follows (since we may always choose the index model to be a $\lambda^+$-saturated elementary 
extension of $M$). 
First note that for each finite $s \subseteq \lambda$, each nonzero $\mc \in \ba$, $\mc$ induces a partition of 
$\{ \mb_{s^\prime} : s^\prime \subseteq s \}$ according to whether $\mc \cap \mb_{s^\prime} = 0$ or $\mc \cap \mb_{s^\prime} > 0$. 
By shrinking $\mc$ if necessary, we may assume $\mc$ induces a partition of 
$\{ \mb_{s^\prime} : s^\prime \subseteq s \}$ according to whether $\mc \cap \mb_{s^\prime} = 0$ or $\mc \leq \mb_{s^\prime}$.  
Let $\vrt(s) := \bigcup \{ v_\beta : \beta \in s \}$ be the set of indices for all elements mentioned in formulas in $s$. Finally, by 
shrinking $\mc$ if necessary, we  
may also assume that either $\mc \leq \mx_{g_w}$ or $\mc \leq 1 - \mx_{g_w}$ for every $w \in \mcp$ such that $w \subseteq \vrt(s)$.
It suffices to show that for each such finite $s$ and nonzero $\mc$, we may choose elements 
$\{ b^\prime_{\beta} : \beta \in \vrt(s) \}$ in $M$ such that 
\begin{equation} \label{eq55}
M \models (\exists x) (\bigwedge_{\beta \in s^\prime} R(x,\bar{b}^\prime_{v_\beta}) \mbox{ when } \mc \leq \mb_{s^\prime} 
\end{equation} 
and 
\begin{equation} \label{eq56}
M \models \neg (\exists x) (\bigwedge_{\beta \in s^\prime} R(x,\bar{b}^\prime_{v_\beta}) \mbox{ when }\mc \cap \mb_{s^\prime} = 0.
\end{equation}

Consider the elements $\{ b_\beta : \beta \in \vrt(s) \}$. In $M$, this set may have some edges on it. Informally, what we will do is for each 
$w \subseteq \vrt(s)$ such that $w \in \mcp$, we 
remove the edge on $\{ b_\beta : \beta \in w \}$ if and only if $\mc \leq 1 - \mx_{g_w}$. Formally, we choose a 
set of distinct elements $\{ b^\prime_\beta : \beta \in \vrt(s) \}$ of $M$ such that $R(b^\prime_{\beta_0}, \dots, b^\prime_{\beta_{k}})$ 
if and only if $\{ \beta_0, \dots, \beta_k \} \in \mcp$ and $\mc \leq \mx_{g_w}$.
 
Let's check that (\ref{eq55}) and (\ref{eq56}) are satisfied. 

If $\mc \cap \mb_{s^\prime} = 0$, then there is some $w$ such that $w \subseteq \vrt(s)$, 
$w \in \mcp$, $[w]^k \subseteq \{ v_\beta : \beta \in s \}$ and $\mc \leq \mx_{g_w}$.  
[Suppose not. If there were no $w \subseteq \vrt(s^\prime)$ such that $w \in \mcp$ and $[w]^k \subseteq \{ v_\beta : \beta \in s^\prime \}$, 
then by definition in (\ref{eq17}),  $\mb_{s^\prime} = 1_\ba$, so we contradict $\mc > 0$. So there must be some such $w$. 
Let $w_0, \dots, w_i$ be a list of all such $w$. Then again by (\ref{eq17}), $\bigcup_{j \leq i} \mx_{g_{w_j}} = 1 - \mb_{s^\prime}$, so 
it must be that $\mc \leq \mx_{g_{w_j}}$ for some $j \leq i$.]  By construction, $M \models R(\bar{b}^\prime_w)$, so $\{ R(x,\bar{b}_{v_\beta}) : \beta \in s^\prime \} \supseteq \{ R(x,\bar{b}_{v_\beta} : v_\beta \in [w]^k \}$ is indeed inconsistent. 

If $\mc \leq \mb_{s^\prime}$, then for any $w$ such that 
$w \subseteq \vrt(s)$, 
$w \in \mcp$, and $[w]^k \subseteq \{ v_\beta : \beta \in s \}$, we must also have (by definition of $\mb_{s^\prime}$ in 
(\ref{eq17}) that $\mc \leq \mx_{g_w}$.  But in this case we removed the edge on $\{ b^\prime_\beta : \beta \in w \}$. 
Since this is true for all relevant $w$, $\{ R(x,\bar{b}^\prime_{v_\beta}) : \beta \in s^\prime \}$ is indeed consistent.

This shows that $\bar{\mb} = \langle \mb_s : s \in \Omega \rangle$ is indeed a possibility pattern, and it remains to show 
it has  no multiplicative refinement. Suppose for a contradiction that 
\[ \bar{\mb}^\prime = \langle \mb^\prime_s : s \in \Omega \rangle \]
were a multiplicative refinement of $\bar{\mb}$, i.e. $\bar{\mb}^\prime$ is a sequence of elements of $\de_*$ such that $s_1, s_2 \in \Omega$ implies $\mb^\prime_{s_1} \cap \mb^\prime_{s_2} = \mb^\prime_{s_1 \cap s_2}$ and for each $s \in \Omega$, $\mb^\prime_{s} \leq \ma_{s}$. 
As each $\mb^\prime_{\{\beta\}} \in \ba^+$, we may write 
$\mb^\prime_{\{\beta\}} = \bigcup \{ \mx_{h_{\beta, i}} : i < i(\beta) \leq \mu \}$ where $\langle h_{\beta,i} : i < i(\beta) \rangle$ is a set of pairwise 
inconsistent functions from $\fin_{\mu, \theta}(2^\lambda)$. Let $S_\beta = \bigcup \{ \dom(h_{\beta,i}) : i < i(\beta) \}$, so 
$S_\beta \subseteq 2^\lambda$ has cardinality $\leq \mu \cdot \theta = \mu$. 

\begin{subclaim} \label{subclaim-m}
Let $n = k+1$.
If $w \in \mcp$ then $\alpha_{w} \in \bigcup \{ S_\beta : v_\beta \in [w]^k \}$. 
\end{subclaim}

\begin{proof} 
Let $x = \{ \beta : v_\beta \in [w]^k \}$, which is a finite set since (\ref{v-list}) was without repetitions. As 
$\overline{\mb}^\prime$ is multiplicative, $\mb^\prime_x = \bigcap \{ \mb^\prime_\beta : \beta \in x \}$. 
Let $f \in \fin_{\mu, \theta}(2^\lambda)$ be such that $\mx_{f} \leq \mb^\prime_x$. 
Thus $\mx_f \leq \mb^\prime_{\{\beta\}}$ for each $\beta \in x$. 
Let $g = f \rstr \bigcup \{ S_\beta : \beta \in x \}$, noting that $g$ 
must be nonempty [indeed, if $\dom(f) \cap S_\beta = \emptyset$ for some $\beta \in x$, then necessarily
$\mx_f \cap (1-\mb^\prime_\beta) > 0$]. 
Then $\mx_{g} \leq \mb^\prime_{\{\beta\}}$ for all $\beta \in x$. This implies that $\mx_{g} \leq \mb^\prime_x \leq \mb_x$ 
because $\bar{\mb}^\prime$ refines $\bar{\mb}$. 
By definition in (\ref{eq17}), 
\[ \mb_x = 1_\ba - \bigcup \{ \mx_{g_u} : u \in \bad \mbox{ and } [u]^k \subseteq \{ v_\beta : \beta \in x \} \}. \]
So as $[w]^k \subseteq \{ v_\beta : \beta \in x \}$, necessarily $\mx_{g} \cap \mx_{g_{w}} = 0_\ba$. 
Since our Boolean algebra $\ba$ was generated freely, it must be that $\dom(g_{w}) \cap \dom(g) \neq \emptyset$, 
but $\dom(g_{w}) = \{ \alpha_{w} \}$. This shows that $\alpha_{w} \in \bigcup \{ S_\beta: v_\beta \in [w]^k \}$ as desired. 

\noindent\emph{This proves Subclaim \ref{subclaim-m}}. \hfill \end{proof}

Finally, define $ F: [\lambda]^k \rightarrow [\lambda]^{\leq \mu} $ 
by: if $v \in [\lambda]^k$ let $\beta$ be such that $v = v_\beta$, and let 
\[ F(v) = \bigcup \{ w \in [\lambda]^n : w \in \bad \mbox{ and } \alpha_w \in S_\beta \} \cup v. \]
Then  $F(v)$ is well defined, $F(v) \subseteq \lambda$, and $|F(v)| \leq \mu$ for $v \in [\lambda]^k$, 
since (\ref{v-list}) is without repetition and $|S_\beta| \leq \mu$.  
Now for all $w \in \bad$, there is $v = v_\beta \in [w]^k$ such that $\alpha_w \in S_\beta$. 
{Thus} $w \subseteq F(v)$. This shows that for all $w \in \bad$,  
\[ (\exists v \in [w]^k) ( w \subseteq F(v) ). \]
This is a contradiction to (\ref{claim16}), so $\overline{\mb}$ does not have a multiplicative refinement. 
Thus, $\de_*$ cannot be moral for $T_{k+1,k}$. This completes the proof. 
\end{proof}

\begin{qst}  \label{q:qn} 
Does Theorem \ref{theta-2} hold for $n>k+1$? 
\end{qst}

\end{document}